\numberwithin{equation}{section}
\numberwithin{figure}{section}
\theoremstyle{plain}
\newtheorem{theorem}{Theorem}[section]
\newtheorem{lemma}[theorem]{Lemma}
\newtheorem{proposition}[theorem]{Proposition}
\newtheorem{corollary}[theorem]{Corollary}
\newtheorem{ques}[theorem]{Question}
\theoremstyle{plain}
\theoremstyle{remark}
\newtheorem{remark}[theorem]{Remark}
\begin{document}
\date{}

\title[P\'{o}lya's conjecture]{Improvement of P\'{o}lya's conjecture for balls and cylinders}

\author{Jingwei Guo}
\address{School of Mathematical Sciences\\
University of Science and Technology of China\\
Hefei, 230026\\ P.R. China}
\email{jwguo@ustc.edu.cn}

\author{Changxing Miao}
\address{Institute of Applied Physics \& Computational Mathematics\\
Beijing, 100088\\P.R. China}
\email{miao\_changxing@iapcm.ac.cn}

\author{Weiwei Wang}
\address{School of Mathematical Sciences\\
University of Electronic Science and Technology of China\\
Chengdu, 611731\\ P.R. China}
\email{wawnwg123@163.com}

\author{Guoqing Zhan}
\address{School of Mathematical Sciences\\
University of Science and Technology of China\\
Hefei, 230026\\ P.R. China}
\email{zhanguoqing@mail.ustc.edu.cn}

\thanks{}

\subjclass[2020]{35P15, 35P20}

\keywords{Laplacian, eigenvalues, Weyl's law, P\'{o}lya's conjecture, improvement}

\begin{abstract}
P\'{o}lya's conjecture on the eigenvalues of the Laplacian has been one of the core problems in spectral geometry. Building upon the recent breakthrough works on P\'{o}lya's conjecture for balls and annuli by Filonov, Levitin, Polterovich and Sher, we study several aspects of P\'{o}lya's conjecture for balls and cylinders: by refining the purely analytical portion of the proof in \cite{FLPS:2023} for the Neumann P\'{o}lya's conjecture for the disk, we extend the regime of the spectral parameter that can be established without computer assistance; we obtain improvement of P\'{o}lya's conjecture  for disks and balls;  we obtain improvement of P\'{o}lya's conjecture for cylinders and  confirm the Neumann P\'{o}lya's conjecture for cylinders in $\mathbb{R}^3$. As a supplementary effort, we study Weyl's law for cylinders.

\end{abstract}

\maketitle

\tableofcontents

\section{Introduction} \label{intro}

Consider the Laplacian on a bounded domain $\mathscr{D}\subset \mathbb{R}^d$ ($d\geq 2$), under either Dirichlet or Neumann boundary conditions. Denote by
\begin{equation*}
    \mathscr{N}_\mathscr{D}(\mu)=\#\{j\ | \lambda_j \le \mu\}
\end{equation*}
the corresponding eigenvalue counting function, where $\lambda_j^2$ are the eigenvalues for the chosen (Dirichlet or Neumann) boundary condition. It has an asymptotics in the following form
\begin{equation}\label{s1-7}
	\mathscr{N}_\mathscr{D}(\mu)=C_d \left|\mathscr{D}\right|\mu^d\mp C'_d\left|\partial\mathscr{D}\right| \mu^{d-1}+\mathscr{R}_\mathscr{D}(\mu) \quad \textrm{as $\mu\rightarrow \infty$,}
\end{equation}
where $C_d=\omega_d/(2\pi)^{d}$, $C'_d=\omega_{d-1}/(4(2\pi)^{d-1})$, and $\omega_k$ is the volume of the unit ball in $\mathbb{R}^k$. Here, the minus sign (respectively, plus sign) corresponds to the Dirichlet (respectively, Neumann) boundary condition, and $\mathscr{R}_\mathscr{D}(\mu)$ is the Weyl remainder. To distinguish between the Dirichlet and Neumann cases, we may write $\mathscr{N}^{\mathtt{D}}_\mathscr{D}(\mu)$ or $\mathscr{N}^{\mathtt{N}}_\mathscr{D}(\mu)$ with superscripts; otherwise, we omit the superscripts when there is no confusion.

The study of the eigenvalue counting function lies at the heart of spectral geometry, boasting a rich history and a wealth of profound results. This field is primarily concerned with understanding the deep connection between the spectrum of the Laplacian and the underlying geometry of a manifold. Among its most fundamental achievements and enduring puzzles are Weyl's law, which establishes the asymptotic growth of the counting function, and P\'{o}lya's conjecture, which proposes a sharp geometric bound for domains in Euclidean space.

In this paper, we address specific questions surrounding Weyl's law and  P\'{o}lya's conjecture for balls and cylinders. Denote by
\begin{equation*}
	\mathcal{B}_R^d\subset \mathbb{R}^d
\end{equation*}
the ball centered at the origin with radius $R>0$  and by
\begin{equation*}
	\mathcal{C}_{R,L}^d=\mathcal{B}_R^{d-1}\times [0,L]\subset \mathbb{R}^d
\end{equation*}
the cylinder with radius $R>0$ and height $L>0$. We provide a sharper upper bound for the remainder term in Weyl's law for cylinders.

\begin{theorem}\label{thm111}
For integer $d\geq 3$, if there exists some constant $\alpha\in (0,1)$ such that, for any $\varepsilon>0$,
\begin{equation*}
    \mathscr{R}_{\mathcal{B}_R^{d-1}}(\mu)=O_{\varepsilon}\left(\mu^{d-3+\alpha+\varepsilon} \right),
\end{equation*}
then, for any $\varepsilon>0$,
\begin{equation*}
    \mathscr{R}_{\mathcal{C}_{R,L}^d}(\mu)=O_{\varepsilon}\left(\mu^{d-2+\alpha+\varepsilon} \right).
\end{equation*}
This result holds for both the Dirichlet and Neumann cases.
\end{theorem}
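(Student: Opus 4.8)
\emph{Proof strategy.} The plan is to separate variables on $\mathcal{C}_{R,L}^d=\mathcal{B}_R^{d-1}\times[0,L]$, reducing $\mathscr{N}_{\mathcal{C}_{R,L}^d}(\mu)$ to a one-dimensional sum of counting functions of the cross-section, then insert the two-term asymptotics \eqref{s1-7} for $\mathcal{B}_R^{d-1}$ (with the hypothesized remainder bound) and estimate the resulting Riemann sums. Fix the boundary condition $\mathtt B\in\{\mathtt D,\mathtt N\}$. The eigenvalues of $-\Delta$ on $[0,L]$ are $(\pi k/L)^2$ with $k\ge1$ in the Dirichlet and $k\ge0$ in the Neumann case, so the eigenvalues of $-\Delta$ on $\mathcal{C}_{R,L}^d$ are exactly the sums $\lambda_j^2+(\pi k/L)^2$ with $\lambda_j^2$ running over the eigenvalues of $-\Delta$ on $\mathcal{B}_R^{d-1}$ (same boundary condition). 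Hence, with $\nu_k=\sqrt{\mu^2-(\pi k/L)^2}$,
$$\mathscr{N}^{\mathtt B}_{\mathcal{C}_{R,L}^d}(\mu)=\sum_{k\ge a,\ \pi k/L\le\mu}\mathscr{N}^{\mathtt B}_{\mathcal{B}_R^{d-1}}(\nu_k),$$
where $a=1$ (Dirichlet) or $a=0$ (Neumann).

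\emph{The remainder contribution.} Define $\mathscr{R}^{\mathtt B}_{\mathcal{B}_R^{d-1}}(\nu)$ for all $\nu\ge0$ via \eqref{s1-7} in dimension $d-1$. Since $d\ge3$ and $\alpha\in(0,1)$, the exponent $d-3+\alpha+\varepsilon$ is positive, so the hypothesis together with the trivial bound on a bounded $\nu$-range gives $|\mathscr{R}^{\mathtt B}_{\mathcal{B}_R^{d-1}}(\nu)|\le C_\varepsilon(1+\nu^{d-3+\alpha+\varepsilon})$ for all $\nu\ge0$. Therefore
$$\Bigl|\sum_{k\ge a,\ \pi k/L\le\mu}\mathscr{R}^{\mathtt B}_{\mathcal{B}_R^{d-1}}(\nu_k)\Bigr|\le C_\varepsilon\mu+C_\varepsilon\sum_{0\le\pi k/L\le\mu}\bigl(\mu^2-(\pi k/L)^2\bigr)^{(d-3+\alpha+\varepsilon)/2}.$$
The last summand is non-negative and decreasing in $k$, so comparison with the integral — substituting $t=(L\mu/\pi)\sin\theta$ turns it into a convergent Beta integral times $\mu^{d-2+\alpha+\varepsilon}$ — shows the whole expression is $O_\varepsilon(\mu^{d-2+\alpha+\varepsilon})$.

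\emph{The main sums.} Inserting \eqref{s1-7} for $\mathcal{B}_R^{d-1}$ reduces matters to evaluating $C_{d-1}|\mathcal{B}_R^{d-1}|\,\Sigma_1\mp C'_{d-1}|\partial\mathcal{B}_R^{d-1}|\,\Sigma_2$, where $\Sigma_i=\sum_{k\ge a,\ \pi k/L\le\mu}\nu_k^{\,d-i}$ ($i=1,2$), up to an admissible error. For $\Sigma_2$ a crude monotone comparison with $\int_0^{\mu L/\pi}(\mu^2-(\pi t/L)^2)^{(d-2)/2}\,dt=\tfrac{L}{\pi}\mu^{d-1}\int_0^{\pi/2}\cos^{d-1}\theta\,d\theta$ already suffices, with error $O(\mu^{d-2})$ which is absorbed. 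For $\Sigma_1$ one must also extract the term of order $\mu^{d-1}$, and here naive Euler–Maclaurin fails because $f(t):=(\mu^2-t^2)_+^{(d-1)/2}$ is not smooth at the turning point $t=\mu$. Instead I would extend $f$ to an even function on $\mathbb{R}$ and apply Poisson summation, $\sum_{k\in\mathbb{Z}}f(\pi k/L)=\tfrac{L}{\pi}\sum_{m\in\mathbb{Z}}\widehat f(Lm/\pi)$: the $m=0$ term equals $\tfrac{2L}{\pi}\mu^d\int_0^{\pi/2}\cos^d\theta\,d\theta$, while for $m\ne0$ the classical identity expressing the Fourier transform of $(1-u^2)_+^{(d-1)/2}$ through $J_{d/2}$, combined with $|J_{d/2}(x)|\lesssim x^{-1/2}$ for $x\ge1$, gives $\sum_{m\ne0}|\widehat f(Lm/\pi)|\lesssim\mu^{(d-1)/2}\sum_{m\ge1}m^{-(d+1)/2}\lesssim\mu^{(d-1)/2}$. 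Splitting off the $k=0$ term of the even sum then yields $\Sigma_1=\tfrac{L}{\pi}\mu^d\int_0^{\pi/2}\cos^d\theta\,d\theta+\tfrac12\mu^{d-1}+O(\mu^{(d-1)/2})$ in the Neumann case, and the same with $-\tfrac12\mu^{d-1}$ in the Dirichlet case.

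\emph{Assembly and the main difficulty.} Using the Beta-integral identities $C_{d-1}\tfrac1\pi\int_0^{\pi/2}\cos^d\theta\,d\theta=C_d$, $C'_{d-1}\tfrac1\pi\int_0^{\pi/2}\cos^{d-1}\theta\,d\theta=C'_d$ and $4C'_d=C_{d-1}$, together with $|\mathcal{C}_{R,L}^d|=L|\mathcal{B}_R^{d-1}|$ and $|\partial\mathcal{C}_{R,L}^d|=L|\partial\mathcal{B}_R^{d-1}|+2|\mathcal{B}_R^{d-1}|$, the $\mu^d$- and $\mu^{d-1}$-coefficients recombine precisely into $C_d|\mathcal{C}_{R,L}^d|$ and $\pm C'_d|\partial\mathcal{C}_{R,L}^d|$ — the $\pm\tfrac12\mu^{d-1}$ term from $\Sigma_1$ supplying exactly the two end-cap contributions $\pm2C'_d|\mathcal{B}_R^{d-1}|$ — while everything left over is $O_\varepsilon(\mu^{d-2+\alpha+\varepsilon})$. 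This gives $\mathscr{R}^{\mathtt B}_{\mathcal{C}_{R,L}^d}(\mu)=O_\varepsilon(\mu^{d-2+\alpha+\varepsilon})$ in both cases. The one genuinely delicate step is the treatment of $\Sigma_1$: because the integrand has a turning-point singularity at the upper endpoint, no naive summation formula reaches an error $o(\mu^{d-1})$, and it is precisely the $O(x^{-1/2})$ decay of $J_{d/2}$ — equivalently, the decay $\widehat f(\xi)=O(|\xi|^{-(d+1)/2})$ along the lattice, summable since $d\ge2$ — that makes the second-order term extractable; everything else (the small-$\nu_k$ terms and the constant bookkeeping) is routine.
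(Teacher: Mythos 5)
Your proposal is correct, and its skeleton is the same as the paper's: separate variables, write $\mathscr{N}_{\mathcal{C}_{R,L}^d}(\mu)$ as a sum of $\mathscr{N}_{\mathcal{B}_R^{d-1}}\bigl(\sqrt{\mu^2-(\pi l/L)^2}\bigr)$ over $l$, insert the two-term Weyl asymptotics with the hypothesized remainder (the remainder contribution summing to $O_\varepsilon(\mu^{d-2+\alpha+\varepsilon})$ exactly as you say), and then extract the asymptotics of the power sums $\sum_l(\mu^2-(\pi l/L)^2)^{m/2}$, with the $\pm\tfrac12\mu^{d-1}$ correction in the top sum recombining with the lateral-boundary term to give $\mp C_d'|\partial\mathcal{C}_{R,L}^d|\mu^{d-1}$. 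The one place you diverge is the treatment of $\Sigma_1$: you use Poisson summation plus the $J_{d/2}$ decay, while the paper simply applies Euler--Maclaurin for $m\ge 2$ and a crude integral comparison for $m=1$ (which only arises as the second sum when $d=3$, matching your crude treatment of $\Sigma_2$). Your justification for the detour — that ``no naive summation formula reaches an error $o(\mu^{d-1})$'' because of the turning point — is not accurate for the exponents actually needed: for $m=d-1\ge 2$ the function $f(t)=(\mu^2-(\pi t/L)^2)^{m/2}$ is $C^1$ up to the endpoint with $|f'|=O(\mu^{m-1})$ and $\int|f''|=O(\mu^{m-1})$ (the would-be singular factor $(\mu^2-\cdot)^{(m-4)/2}$ is still integrable, and its coefficient vanishes when $m=2$), so a second-order Euler--Maclaurin gives error $O(\mu^{m-1})$, which is what the paper uses. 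Your Poisson-summation route is nevertheless valid (continuity of $f$ plus absolute summability of $\widehat f$ over the dual lattice suffices) and in fact yields the sharper error $O(\mu^{(d-1)/2})$ for $\Sigma_1$, though that extra precision is not needed for the theorem; the turning-point singularity genuinely obstructs such expansions only for exponents $m\le 1$, which neither you nor the paper needs beyond the leading term.
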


Based on recent results on Weyl's law for balls in \cite[Theorem 1.1]{GJWY:2024}, $\alpha$ can take the value
\begin{equation*}
    2\theta^*=0.628966\cdots
\end{equation*}
with $-\theta^*$ defined as a certain solution to an elementary equation (see \cite[Theorem 6.1]{GJWY:2024} for its precise definition). Hence, we have

\begin{corollary} \label{cor1}
For integer $d\geq 3$ and $\varepsilon>0$,
\begin{equation*}
    \mathscr{R}_{\mathcal{C}_{R,L}^d}(\mu)=O_{\varepsilon}\left(\mu^{d-2+2\theta^*+\varepsilon} \right).
\end{equation*}
\end{corollary}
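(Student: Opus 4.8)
The plan is to use separation of variables on the product $\mathcal{C}_{R,L}^d=\mathcal{B}_R^{d-1}\times[0,L]$. The Laplace eigenfunctions on the cylinder are products of an eigenfunction on $\mathcal{B}_R^{d-1}$ with one on $[0,L]$ carrying the same boundary condition, so the spectrum consists of the numbers $\nu_j^2+(\pi n/L)^2$, where $\{\nu_j^2\}$ is the Dirichlet (resp.\ Neumann) spectrum of $\mathcal{B}_R^{d-1}$ and $n$ runs over $n\ge1$ (Dirichlet) or $n\ge0$ (Neumann). Consequently
\begin{equation*}
\mathscr{N}_{\mathcal{C}_{R,L}^d}(\mu)=\sum_{n}\mathscr{N}_{\mathcal{B}_R^{d-1}}\!\Bigl(\sqrt{(\mu^2-(\pi n/L)^2)_+}\Bigr),
\end{equation*}
the terms with $\pi n/L>\mu$ being zero. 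I would insert the two-term Weyl law $\mathscr{N}_{\mathcal{B}_R^{d-1}}(t)=C_{d-1}|\mathcal{B}_R^{d-1}|t^{d-1}\mp C'_{d-1}|\partial\mathcal{B}_R^{d-1}|t^{d-2}+\mathscr{R}_{\mathcal{B}_R^{d-1}}(t)$ and split the right-hand side as $S_1+S_2+S_3$, with $S_1,S_2$ gathering the two power-law pieces and $S_3=\sum_n\mathscr{R}_{\mathcal{B}_R^{d-1}}(\sqrt{(\mu^2-(\pi n/L)^2)_+})$.

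The term $S_3$ is the easy one. Since $\mathscr{N}_{\mathcal{B}_R^{d-1}}$ and the Weyl polynomial are bounded on compact $t$-intervals, the hypothesis upgrades to $|\mathscr{R}_{\mathcal{B}_R^{d-1}}(t)|\lesssim_\varepsilon t^{d-3+\alpha+\varepsilon}+1$ for every $t\ge0$. As $d\ge3$ and $\alpha>0$, the exponent $d-3+\alpha+\varepsilon$ is positive, hence $x\mapsto(\mu^2-(\pi x/L)^2)^{(d-3+\alpha+\varepsilon)/2}$ is nonnegative and decreasing on $[0,L\mu/\pi]$; comparing the sum with its integral and using that there are $O(\mu)$ terms gives
\begin{equation*}
|S_3|\lesssim_\varepsilon \mu^{d-3+\alpha+\varepsilon}+\int_0^{L\mu/\pi}\bigl(\mu^2-(\pi x/L)^2\bigr)^{(d-3+\alpha+\varepsilon)/2}\,dx+\mu\lesssim_\varepsilon\mu^{d-2+\alpha+\varepsilon},
\end{equation*}
the integral being evaluated by the substitution $\pi x/L=\mu\sin\phi$.

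For $S_1$ and $S_2$ I would apply Poisson summation to the even, compactly supported function $g_\beta(x)=(\mu^2-(\pi x/L)^2)_+^{\beta}$ with $\beta=\tfrac{d-1}{2}$ and $\beta=\tfrac{d-2}{2}$, respectively. The zeroth Fourier coefficient $\int_{\mathbb{R}}g_\beta=2\int_0^{L\mu/\pi}(\mu^2-(\pi x/L)^2)^{\beta}dx$, evaluated by $\pi x/L=\mu\sin\phi$ and a Beta-function identity, reproduces \emph{exactly} the volume term $C_d|\mathcal{C}_{R,L}^d|\mu^d$ (from $S_1$) and the lateral part $\mp C'_d\,L\,|\partial\mathcal{B}_R^{d-1}|\mu^{d-1}$ of the surface term (from $S_2$), the required identities reducing to $\omega_k\,\Gamma(\tfrac k2+1)=\sqrt{\pi}\,\omega_{k-1}\,\Gamma(\tfrac{k+1}{2})$ for $k=d-1,d$. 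The Dirichlet/Neumann distinction enters only through whether $n=0$ is included, which contributes $\mp\tfrac12 g_{(d-1)/2}(0)=\mp\tfrac12\mu^{d-1}$ to $S_1/(C_{d-1}|\mathcal{B}_R^{d-1}|)$; since $\tfrac12C_{d-1}=2C'_d$, this is exactly the end-cap contribution $\mp2C'_d|\mathcal{B}_R^{d-1}|\mu^{d-1}$, and together with the lateral part it assembles $\mp C'_d|\partial\mathcal{C}_{R,L}^d|\mu^{d-1}$ because $|\partial\mathcal{C}_{R,L}^d|=L|\partial\mathcal{B}_R^{d-1}|+2|\mathcal{B}_R^{d-1}|$. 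For the nonzero modes, the change of variables $x=(L\mu/\pi)t$ together with the classical formula $\int_{-1}^1(1-t^2)^{\beta}e^{-i\xi t}dt=c_\beta\,\xi^{-\beta-1/2}J_{\beta+1/2}(\xi)$ and $|J_\nu(\xi)|\lesssim\xi^{-1/2}$ show the $k$-th coefficient is $O(\mu^{\beta}|k|^{-\beta-1})$, so summing over $k\ne0$ contributes $O(\mu^{(d-1)/2})$ to $S_1$ and $O(\mu^{(d-2)/2})$ to $S_2$, both $\ll\mu^{d-2+\alpha+\varepsilon}$ since $d\ge3$ and $\alpha>0$. Adding $S_1+S_2+S_3$ yields $\mathscr{N}_{\mathcal{C}_{R,L}^d}(\mu)=C_d|\mathcal{C}_{R,L}^d|\mu^d\mp C'_d|\partial\mathcal{C}_{R,L}^d|\mu^{d-1}+O_\varepsilon(\mu^{d-2+\alpha+\varepsilon})$, which is the asserted bound on $\mathscr{R}_{\mathcal{C}_{R,L}^d}$.

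The main difficulty I expect is bookkeeping rather than analysis: one must handle the Poisson expansion (equivalently a regularized Euler--Maclaurin expansion, as $g_\beta$ is only finitely differentiable at $\pm L\mu/\pi$) carefully enough to see that the leading pieces reconstruct the precise Weyl polynomial of the cylinder --- in particular that including or omitting $n=0$ produces the cap surface term with the correct sign in each boundary condition --- while simultaneously checking that every genuinely lower-order contribution (the Bessel tails, the ``$+1$'' in the remainder bound, and the $O(1)$ values of $n$ for which $\sqrt{\mu^2-(\pi n/L)^2}$ lies below the first eigenvalue of $\mathcal{B}_R^{d-1}$) stays below the target order $\mu^{d-2+\alpha+\varepsilon}$. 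The Bessel/oscillatory-integral asymptotics and the Gamma-function identities are standard, so once the normalizations are pinned down the rest of the argument is routine.
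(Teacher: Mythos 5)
Your proposal is correct and follows essentially the same route as the paper: the corollary is Theorem \ref{thm111} specialized to $\alpha=2\theta^*$ via the ball remainder estimate of \cite{GJWY:2024}, and your argument reproduces the paper's proof of Theorem \ref{thm111} --- separation of variables giving $\mathscr{N}_{\mathcal{C}_{R,L}^d}(\mu)=\sum_{l}\mathscr{N}_{\mathcal{B}_R^{d-1}}\bigl(\sqrt{\mu^2-(\pi l/L)^2}\bigr)$, insertion of the two-term Weyl law for the ball, a trivial bound for the summed remainders, and evaluation of the elementary sums $\sum_l(\mu^2-(\pi l/L)^2)^{m/2}$. The only divergence is technical: the paper evaluates these sums by Euler--Maclaurin (for $m\ge 2$) and direct comparison with an integral (for $m=1$), whereas you use Poisson summation with the Bessel-function bound for the nonzero frequencies; both routes yield the two main terms, and your bookkeeping of the $n=0$ mode and of $|\partial\mathcal{C}_{R,L}^d|=L\,|\partial\mathcal{B}_R^{d-1}|+2\,|\mathcal{B}_R^{d-1}|$ is correct. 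One small slip to fix: in the Neumann case the terms with $\pi n/L>\mu$ are not zero under your $(\cdot)_+$ convention, since $\mathscr{N}^{\mathtt{N}}_{\mathcal{B}_R^{d-1}}(0)=1$; the sum must simply be truncated at $n\le L\mu/\pi$, exactly as in the paper, which affects nothing else in the estimate.
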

\noindent  This improves upon the remainder estimate $o(\mu^{d-1})$ in the Weyl conjecture for general domains and manifolds with boundary (see, for example, Ivrii \cite{I:1980}).       Thus, we now have a better understanding of the behavior of $\mathscr{N}_{\mathcal{C}_{R,L}^d}(\mu)$ as $\mu \to \infty$.

Unlike the study of Weyl's law, P\'{o}lya \cite{Polya:1954} conjectured in 1954 that the eigenvalue counting function is bounded by the leading term in the asymptotics \eqref{s1-7} from below (above) for the Neumann (Dirichlet) boundary condition. That is,  for all $\mu\geq 0$,
\begin{equation}\label{PolyaConj}
    \mathscr{N}^{\mathtt{D}}_{\mathscr{D}}(\mu)\leq C_d|\mathscr{D}|\mu^d \leq \mathscr{N}^{\mathtt{N}}_{\mathscr{D}}(\mu).
\end{equation}
The first and second inequalities in \eqref{PolyaConj} are referred to as the Dirichlet and Neumann P\'{o}lya's conjectures, respectively.

In 1961, P\'{o}lya \cite{Polya:1961} proved his conjecture \eqref{PolyaConj} for planar tiling domains. While his proof for the Neumann case relied on an additional ``regular tiling'' assumption, this restriction was removed by Kellner \cite{Kellner:1966} in 1966. In 1984, Urakawa \cite{Urakawa:1983} generalized P\'{o}lya's result to bounded domains in $\mathbb{R}^d$ for the Dirichlet case. This derivation, which incorporated an extra factor of the lattice packing density, confirmed the conjecture for higher-dimensional tiling domains. In 1997, Laptev \cite{Laptev:1997} showed that the Dirichlet P\'{o}lya's conjecture holds for the product $\Omega_1\times\Omega_2$ whenever it holds for $\Omega_1\subset \mathbb{R}^{d_1}$ with $d_1\geq 2$, and $\Omega_2\subset \mathbb{R}^{d_2}$ is of finite Lebesgue measure. In 2023, Freitas and Salavessa \cite{FS:2023} explored Laptev's approach and showed the existence of classes of non-tiling domains satisfying P\'{o}lya’s conjecture in any dimension, in both the Euclidean and non-Euclidean cases (see also Freitas, Lagac\'e, and Payette \cite{FLP:2021}). Recently, He and Wang \cite{HW:2024} proved P\'{o}lya's conjecture for certain thin products of Euclidean domains of the form $(a\Omega_1)\times \Omega_2$ for small $a>0$. We refer interested readers to \cite{FLP:2021,HW:2024} and the references therein for further results on unions and products of domains. Recent progress has also been made on variants of the conjecture. Notably, Filonov \cite{F:2025} tackled the Neumann case for planar convex domains, and Jiang and Lin \cite{JL:2025} proved an $\epsilon$-loss version of the Dirichlet case for bounded Lipschitz domains in $\mathbb{R}^d$ based on a uniform estimate of the remainder of Weyl's law, and provided in all dimensions a class of general domains satisfying the Dirichlet P\'{o}lya's conjecture.

In a remarkable 2023 paper, Filonov, Levitin, Polterovich, and Sher \cite{FLPS:2023} achieved a breakthrough by proving P\'{o}lya's  conjecture for the disk. This work resolved a long-standing open problem, establishing the disk as the first non-tiling  planar domain for which the celebrated conjecture is known to hold. Their work also encompassed the confirmation of the conjecture for arbitrary planar sectors and, in the Dirichlet case, for balls of any dimension, showcasing the power and versatility of their methodology. Their very recent follow-up work \cite{FLPS:2025} has further expanded the frontier by verifying the Dirichlet P\'{o}lya's conjecture for annuli, another fundamental and geometrically non-trivial domain, thus providing the first verification of the conjecture for a non-simply connected planar domain.

In their proof of the Neumann P\'{o}lya's conjecture for the disk, Filonov, Levitin, Polterovich, and Sher \cite{FLPS:2023} provided a purely analytic proof of the following theorem.

\begin{theorem}[{\cite[Theorem 1.4]{FLPS:2023}}]  \label{thm222}
The Neumann P\'{o}lya's conjecture for the unit disk holds for all $\mu\geq
\frac{6\pi}{3\pi-8}$.
\end{theorem}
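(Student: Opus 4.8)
The plan is to prove the Neumann P\'olya bound $\mathscr{N}^{\mathtt N}_{\mathcal B_1^2}(\mu)\ge C_2|\mathcal B_1^2|\mu^2=\tfrac{\mu^2}{4}$ for all $\mu\ge \tfrac{6\pi}{3\pi-8}$ by an explicit lower bound on the lattice-point--type counting function coming from separation of variables. Writing the Neumann eigenvalues of the disk in polar coordinates, $\mathscr{N}^{\mathtt N}_{\mathcal B_1^2}(\mu)$ counts pairs $(n,k)$ with $n\ge 0$, $k\ge 1$ such that the $k$-th positive zero of $J_n'$ (together with the value $0$ when $n=0$) is $\le\mu$, with multiplicity $2$ for $n\ge1$ and $1$ for $n=0$. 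So the first step is to rewrite $\mathscr{N}^{\mathtt N}_{\mathcal B_1^2}(\mu)=\sum_{n\ge0}\varepsilon_n\, \#\{k:j'_{n,k}\le\mu\}$ with $\varepsilon_0=1$, $\varepsilon_n=2$ otherwise, and then to bound $\#\{k:j'_{n,k}\le\mu\}$ from below. The key analytic input is a one-sided bound on the number of Neumann eigenvalues of the Bessel operator on $[0,1]$ below $\mu^2$: for each fixed $n$ one has $\#\{k:j'_{n,k}\le\mu\}\ge \tfrac{1}{\pi}\sqrt{\mu^2-n^2}-\tfrac12$ whenever $\mu> n$, which follows from a Sturm-comparison / WKB-type estimate on the radial ODE $-(r u')'+ (n^2/r) u=\mu^2 r u$ together with the Neumann condition at $r=1$ and the regularity condition at $r=0$; this is precisely the place where the "purely analytic" content of \cite{FLPS:2023} lives, and the $-\tfrac12$ (rather than a worse constant) is what makes the final threshold come out as $\tfrac{6\pi}{3\pi-8}$.

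Granting that pointwise-in-$n$ lower bound, the second step is summation. One gets
\[
\mathscr{N}^{\mathtt N}_{\mathcal B_1^2}(\mu)\ \ge\ \sum_{0\le n<\mu}\varepsilon_n\left(\frac{1}{\pi}\sqrt{\mu^2-n^2}-\frac12\right),
\]
and the task is to show the right-hand side is $\ge \mu^2/4$. The main term $\tfrac{2}{\pi}\sum_{0\le n<\mu}\sqrt{\mu^2-n^2}$ is a Riemann sum for $\tfrac{2}{\pi}\int_0^\mu\sqrt{\mu^2-t^2}\,dt=\tfrac{\mu^2}{2}$, which is exactly twice the P\'olya target, so the whole game is to control the deficit: the Riemann-sum error, the $-\tfrac12$ correction (which costs about $-\mu$), and the boundary/parity discrepancies (the $n=0$ term has weight $1$ not $2$, costing about $-\tfrac{2\mu}{\pi}+\tfrac12$; the last term $n=\lceil\mu\rceil-1$ may be small). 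So the plan is to compare the sum with the integral using monotonicity of $\sqrt{\mu^2-t^2}$ on $[0,\mu]$ — i.e.\ $\int_{n}^{n+1}\sqrt{\mu^2-t^2}\,dt\le \sqrt{\mu^2-n^2}$ for the decreasing part — to obtain $\sum_{0\le n<\mu}\sqrt{\mu^2-n^2}\ge \int_0^{\mu}\sqrt{\mu^2-t^2}\,dt=\tfrac{\pi\mu^2}{4}$, and then collect all lower-order losses into a single expression of the form $\tfrac{\mu^2}{4}-a\mu+b$ with explicit $a,b>0$ (the dominant contributions to $a$ being $1$ from the $-\tfrac12$'s and $\tfrac{2}{\pi}$ from the $n=0$ parity defect, giving $a=1+\tfrac{2}{\pi}$ up to the careful bookkeeping that produces the stated constant). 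Solving $\tfrac{\mu^2}{4}-a\mu+b\ge \tfrac{\mu^2}{4}$... — more precisely, arranging the inequality so that the quadratic in $\mu$ is nonnegative for $\mu\ge \tfrac{6\pi}{3\pi-8}$ — finishes the argument; note $\tfrac{6\pi}{3\pi-8}$ is the root of the linear factor that remains after the $\mu^2/4$ terms cancel, which pins down how sharp each intermediate estimate must be.

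The hardest step, and the heart of the matter, is the pointwise lower bound $\#\{k:j'_{n,k}\le\mu\}\ge \tfrac1\pi\sqrt{\mu^2-n^2}-\tfrac12$ with the \emph{correct constant $\tfrac12$}: a crude Sturm comparison against the operator with $n^2/r^2$ replaced by its sup on the relevant interval loses too much, so one needs a more delicate argument — e.g.\ a Prüfer-angle / phase-function analysis of the radial equation, tracking the total phase accumulated by a solution between $r=0$ and $r=1$ and showing it is at least $\sqrt{\mu^2-n^2}-\tfrac{\pi}{2}+(\text{boundary phase})$, using that the effective potential $\mu^2-n^2/r^2$ is increasing in $r$ so the instantaneous frequency is bounded below by its value at an appropriate reference point. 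Equivalently one can invoke known lower bounds for the Bessel zeros $j'_{n,k}$ (of the type $j'_{n,k}\le \sqrt{n^2+\pi^2 k^2}$-flavoured inequalities, with the right constants), but the cleanest self-contained route is the ODE/phase estimate; this is the portion that \cite{FLPS:2023} carries out analytically and that the present paper proposes to refine. Once this is in hand, the remaining summation and the final quadratic-inequality bookkeeping are routine, if somewhat delicate in the constants.
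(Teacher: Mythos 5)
There is a genuine gap: your central lemma is false. The number of zeros of $J_n'$ in $(0,\mu]$ is \emph{not} bounded below by $\tfrac1\pi\sqrt{\mu^2-n^2}-\tfrac12$; the correct asymptotic count is governed by the phase integral
\[
\int_{n/\mu}^{1}\sqrt{\mu^2-\frac{n^2}{r^2}}\,\mathrm{d}r
=\sqrt{\mu^2-n^2}-n\arccos\!\left(\frac{n}{\mu}\right)=\pi\,G_\mu(n),
\]
so the count is $G_\mu(n)+O(1)$, where $G_\mu(n)=\tfrac1\pi\bigl(\sqrt{\mu^2-n^2}-n\arccos(n/\mu)\bigr)$. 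The term $n\arccos(n/\mu)$ that you drop is of the \emph{same order} as $\sqrt{\mu^2-n^2}$ when $n$ is comparable to $\mu$ (e.g.\ for $\mu=n+\delta$ with $\delta\sim n^{1/3}$ your bound predicts $\sim n^{2/3}$ zeros while in fact $j'_{n,1}\approx n+0.81\,n^{1/3}$ gives none). Your Pr\"ufer heuristic goes wrong at exactly this point: the instantaneous frequency is $\sqrt{\mu^2-n^2/r^2}$, whose integral over the classically allowed region is $\pi G_\mu(n)$, strictly smaller than $\sqrt{\mu^2-n^2}$; no phase argument can recover your claimed bound. A global sanity check confirms this: summing your bound with weights $2$ gives a main term $\tfrac2\pi\sum_{n<\mu}\sqrt{\mu^2-n^2}\approx\tfrac{\mu^2}{2}$, i.e.\ a lower bound of order $\tfrac{\mu^2}{2}-O(\mu)$ for $\mathscr{N}^{\mathtt N}_{\mathcal B_1^2}(\mu)$, which contradicts Weyl's law ($\mathscr{N}^{\mathtt N}_{\mathcal B_1^2}(\mu)=\tfrac{\mu^2}{4}+\tfrac{\mu}{2}+o(\mu)$). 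You even observe the factor-two discrepancy yourself; it is not slack to be spent on bookkeeping but a signal that the lemma cannot hold.

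With the correct count, $2\sum_{n}G_\mu(n)\approx 2\int_0^\mu G_\mu(t)\,\mathrm{d}t=\tfrac{\mu^2}{4}$ is \emph{exactly} the P\'olya target, with no surplus at the level of the leading term. This is why the actual argument (in \cite{FLPS:2023}, and as refined in Section \ref{sec2.1} of this paper) must work much harder: one passes to the lattice counting function $\mathscr{P}^{\mathtt N}_2(\mu)=-\lfloor\tfrac{\mu}{\pi}+\tfrac34\rfloor+2\sum_{m=0}^{\lfloor\mu\rfloor}\lfloor G_\mu(m)+\tfrac34\rfloor$ via sharp one-sided bounds on $j'_{n,k}$ expressed through $G_\mu$, and then extracts a positive lower-order gain from the floor sums by exploiting the convexity of $g$ (trapezoidal-floor-sum estimates such as Lemma \ref{s2-6}); the threshold $6\pi/(3\pi-8)$ (and $11.89$ in Theorem \ref{thm999}) emerges precisely from balancing that gain against the $-\lfloor\mu/\pi+\tfrac34\rfloor$ loss. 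Your summation and quadratic-inequality bookkeeping, built on the false pointwise bound and on a main term twice too large, does not connect to this structure, so the proposal does not yield the theorem.
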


\noindent Note that $6\pi/(3\pi-8)\approx 13.23$. The regime $\mu\leq 2\sqrt{3}\approx 3.46$ is easy to verify analytically (see \cite[Lemma 1.3]{FLPS:2023}), while the intermediate interval $\mu \in [3, 14]$ was settled in \cite[Theorem 1.5]{FLPS:2023} by a rigorous computer-assisted argument. Consequently, the remaining challenge for a complete analytical understanding is the spectral gap $(2\sqrt{3}, 6\pi/(3\pi-8))$.

A main result of this paper is the following theorem, which we prove analytically in Subsection \ref{sec2.1}.

\begin{theorem} \label{thm999}
The Neumann P\'{o}lya's conjecture for the unit disk holds for all $\mu \geq 11.89$.
\end{theorem}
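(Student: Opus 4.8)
The plan is to follow the overall strategy of Filonov, Levitin, Polterovich and Sher \cite{FLPS:2023} for the Neumann disk, pushing each quantitative step as far as it will go. By separation of variables the Neumann counting function of the unit disk decomposes into angular modes,
\begin{equation*}
\mathscr{N}^{\mathtt{N}}_{\mathcal{B}_1^2}(\mu)=1+N_0(\mu)+2\sum_{n\ge 1}N_n(\mu),\qquad N_n(\mu):=\#\{k\ge 1:\ j'_{n,k}\le\mu\},
\end{equation*}
where $j'_{n,k}$ is the $k$-th positive zero of $J_n'$ and, by convention, $j'_{0,k}=j_{1,k}$ since $J_0'=-J_1$. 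Because $C_2|\mathcal{B}_1^2|\mu^2=\mu^2/4$, it suffices to prove $\mathscr{N}^{\mathtt{N}}_{\mathcal{B}_1^2}(\mu)\ge\mu^2/4$ for all $\mu\ge 11.89$. It is convenient to introduce the Weyl phase $F(\nu,\mu)=\sqrt{\mu^2-\nu^2}-\nu\arccos(\nu/\mu)$ for $0\le\nu\le\mu$; a one-line computation gives $\partial_\nu F(\nu,\mu)=-\arccos(\nu/\mu)$, so $F(\cdot,\mu)$ is decreasing and convex, $F(0,\mu)=\mu$, and $\tfrac2\pi\int_0^\mu F(\nu,\mu)\,d\nu=\mu^2/4$.

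The core ingredient is a sharp lower bound for the per-mode count $N_n(\mu)$, of the shape $N_n(\mu)\ge\tfrac1\pi F(n,\mu)+c$ with an explicit additive constant $c$ and a controlled error term, valid for $\mu>n$; such a bound follows from a Pr\"ufer-angle (Sturm-comparison) analysis of the Bessel equation in Liouville normal form $v''+\bigl(1-\tfrac{\nu^2-1/4}{x^2}\bigr)v=0$, combined with the interlacing $0<j'_{\nu,1}<j_{\nu,1}<j'_{\nu,2}<j_{\nu,2}<\cdots$ (so $N_n(\mu)$ exceeds the Dirichlet count $\#\{k:j_{n,k}\le\mu\}$ by $0$ or $1$). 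I would re-examine this derivation to extract the largest admissible additive constant — morally the Neumann boundary term — and, crucially, to replace the error term by one of strictly lower order, or one with a sharp explicit constant; I would also try to retain the ceiling function, $N_n(\mu)\ge\lceil\cdots\rceil$, and exploit that the fractional parts of $\tfrac1\pi F(n,\mu)$, whose $n$-increments lie strictly between $-\tfrac12$ and $0$, contribute roughly an extra $\tfrac12$ per mode on summation. For the lowest angular modes, where such asymptotic estimates are weakest, I would instead feed in classical numerical lower bounds for the first few zeros $j'_{n,k}$, $j_{n,k}$.

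Next I would sum over the modes. The main term $\tfrac2\pi\sum_{0\le n\le\lfloor\mu\rfloor}F(n,\mu)$ is a trapezoidal Riemann sum for the convex function $F(\cdot,\mu)$, so by convexity
\begin{equation*}
\frac2\pi\sum_{n=0}^{\lfloor\mu\rfloor}F(n,\mu)\ \ge\ \frac2\pi\int_0^{\lfloor\mu\rfloor}F(\nu,\mu)\,d\nu+\frac1\pi\bigl(F(0,\mu)+F(\lfloor\mu\rfloor,\mu)\bigr)=\frac{\mu^2}{4}+\frac{\mu}{\pi}+O\bigl(\mu^{-1/2}\bigr),
\end{equation*}
using $F(0,\mu)=\mu$ and $\int_{\lfloor\mu\rfloor}^\mu F(\nu,\mu)\,d\nu=O(\mu^{-1/2})$ (since $F(\nu,\mu)\asymp\mu^{-1/2}(\mu-\nu)^{3/2}$ near $\nu=\mu$). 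Combining this gain of $\mu/\pi$ with the per-mode additive constants, the contribution $N_0(\mu)\ge\tfrac1\pi\mu-O(1)$, and the losses described below, one is led to an inequality of the form $\mathscr{N}^{\mathtt{N}}_{\mathcal{B}_1^2}(\mu)-\mu^2/4\ge A\mu-B$ with explicit $A>0,\ B>0$; the whole game is to arrange the bookkeeping so that $B/A\le 11.89$, improving on the value $\tfrac{6\pi}{3\pi-8}\approx 13.23$ obtained in \cite{FLPS:2023}.

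I expect the main obstacle to be the modes with $n$ close to $\mu$, roughly $\mu-c\,\mu^{1/3}<n<\mu$: there $F(n,\mu)$ is small, the McMahon-type asymptotics degenerate, and $N_n(\mu)$ equals only $0$ or $1$, so the count — in particular whether $j'_{n,1}\le\mu$, which hinges on $\nu<j'_{\nu,1}<\nu+O(\nu^{1/3})$ — must be pinned down essentially exactly; any slack is multiplied by $2$ and accumulates over $O(\mu^{1/3})$ modes. I would handle this transition range with uniform (Olver-type) bounds on the first zeros $j'_{\nu,k}$ together with the interlacing $j'_{\nu,k}<j_{\nu,k}<j'_{\nu,k+1}$, and a variant of the comparison argument valid when $\nu$ is comparable to $\mu$. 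A minor simplification is that $\mathscr{N}^{\mathtt{N}}_{\mathcal{B}_1^2}$ is a nondecreasing step function whereas $\mu\mapsto\mu^2/4$ is continuous and increasing, so it is enough to verify the final inequality immediately below each eigenvalue; this, together with treating the very lowest modes by hand using explicit zero values, should let the argument close all the way down to $\mu=11.89$ with no computer assistance.
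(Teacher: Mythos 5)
There is a genuine gap: your text is a strategy outline, not a proof, and—more importantly—it contains no identifiable mechanism that would push the threshold below the value $6\pi/(3\pi-8)\approx 13.23$ already obtained in \cite{FLPS:2023}. Every quantitative step that actually decides where the argument closes is left as an aspiration: ``extract the largest admissible additive constant,'' ``arrange the bookkeeping so that $B/A\le 11.89$,'' ``pinned down essentially exactly.'' The number $11.89$ appears only as a target, never as the output of an estimate. The decisive difficulty is not the smooth main term (your convexity/trapezoid inequality for $\sum_n F(n,\mu)$ is fine) but the loss incurred by the floor functions in the per-mode counts $N_n(\mu)\ge\lfloor\frac1\pi F(n,\mu)+\frac34\rfloor$: you wave at this with the heuristic that fractional parts ``contribute roughly an extra $\tfrac12$ per mode,'' which is exactly the point at which a rigorous argument must work hard and at which FLPS's own analysis gives up the constant you would need. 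Likewise, you correctly flag the transition range $n$ close to $\mu$ as the obstacle, but the proposed remedy (Olver-type bounds, a modified comparison argument) is not carried out, and that region is precisely where the slack accumulates.

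For contrast, the paper does not re-derive the Bessel-zero counts at all; it takes the FLPS reduction $\mathscr{N}^{\mathtt{N}}_{\mathcal{B}_1^2}(\mu)\ge\mathscr{P}^{\mathtt{N}}_2(\mu)=-\lfloor\frac{\mu}{\pi}+\frac34\rfloor+2\sum_{m=0}^{\lfloor\mu\rfloor}\lfloor G_\mu(m)+\frac34\rfloor$ as given and improves the purely analytic estimation of the floor sum. The gain comes from two concrete refinements: (i) a third-derivative refinement of the convexity of $g$ (Lemmas \ref{s2-1}--\ref{s2-6}), which strengthens the trapezoidal floor-sum bound on each level strip by a factor $\frac{1}{2(2-\Theta)}$ with $\Theta\ge\frac1{\pi\mu}$ (and $\Theta\ge\frac1{\sqrt2\pi}$ on the last strip); and (ii) an improved bound for the ``curly triangle'' $\int_{M_0}^\mu G_\mu$, replacing the constant $\frac18$ by $\frac{8\pi-3\sqrt2}{16(4\pi-\sqrt2)}\approx 0.117$ (Corollary \ref{s2-4}). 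Feeding these into the FLPS bookkeeping produces an explicit remainder $\mathcal{R}(\mu)$ and a verifiable inequality ($\sigma=0.59$) giving $\mathcal{R}(\mu)>0$ for $\mu\ge 11.89$. Without supplying a comparable, explicitly quantified improvement at the floor-function stage (or an equally explicit per-mode gain surviving the transition region), your plan cannot be expected to terminate at $11.89$ rather than at the old threshold.
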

\noindent It improves upon the previous regime of $\mu \geq 6\pi/(3\pi-8)$. Thus, the challenge narrows to providing an analytical proof for the interval $(2\sqrt{3}, 11.89)$. This result is achieved by refining the analytical method of Filonov, Levitin, Polterovich, and Sher. Our improvement comes from exploiting more comprehensive information about the function $g$ in \eqref{s2-9}---a fundamental element in the eigenvalue analysis of the Laplacian on disks and balls. Specifically, we use not only its first and second derivatives, but also its third derivative.  The function $g$ emerges naturally in the asymptotics of Bessel functions. Furthermore, it has a notable interpretation in probability theory, specifically concerning the distribution of the maximum absolute value of a Brownian bridge.


We now turn to a different perspective on P\'{o}lya's conjecture. Note that the remainder term in the asymptotics \eqref{s1-7} is of lower order than the second main term (if exists). See, for example, Corollary \ref{cor1}. Therefore, for sufficiently large $\mu$, the second main term dominates the remainder, allowing us to obtain a bound of $\mathscr{N}_\mathscr{D}(\mu)$ sharper than the one predicted by P\'{o}lya's conjecture. Motivated by this simple observation, a natural question arises: do such improved bounds hold for all $\mu\geq 0$? That is, do we have
\begin{equation}
    \mathscr{N}^{\mathtt{D}}_{\mathscr{D}}(\mu)\leq C_d|\mathscr{D}|\mu^d-C \mu^{d-1} \label{s1-1}
\end{equation}
and
\begin{equation}
    \mathscr{N}^{\mathtt{N}}_{\mathscr{D}}(\mu)\geq C_d|\mathscr{D}|\mu^d+C' \mu^{d-1}  \label{s1-2}
\end{equation}
for some positive constants $C$ and $C'$ and all $\mu\geq 0$? Unfortunately, the proposed inequality \eqref{s1-1} fails to hold in general. This bound is contradicted by the following elementary observation: if $\mu^2$ is smaller than the first Dirichlet eigenvalue, then $\mathscr{N}^{\mathtt{D}}_{\mathscr{D}}(\mu)$ vanishes. Consequently, for any fixed $C>0$, the inequality fails for sufficiently small $\mu$ (even though P\'{o}lya's original conjecture may still hold).  In contrast, \eqref{s1-2} holds trivially for small $\mu$ due to the fact that $\mathscr{N}^{\mathtt{N}}_{\mathscr{D}}(\mu) \geq 1$; our interest therefore lies in its validity in the non-trivial regime.

Instead, one may formulate the following question regarding an improvement of P\'{o}lya's conjecture.
\begin{ques}
Do inequalities \eqref{s1-1} and \eqref{s1-2} hold for some positive constants $C$ and $C'$, and for all $\mu$ greater than some fixed positive constant?
\end{ques}
\noindent In another word, we are curious about the ``in-between'' case (that is, between ``large $\mu$'' and ``nonnegative $\mu$''). We seek to characterize the behavior for values of $\mu$ that are positive yet not necessarily large, thereby addressing the gap between these two previously studied cases.


Very recently, Filonov, Levitin, Polterovich and Sher's \cite[Theorem 1.5]{FLPS:2025} provided such an improvement for the Dirichlet Laplacian on the unit disk:
\begin{equation}
    \mathscr{N}^{\mathtt{D}}_{\mathcal{B}_1^2}(\mu)<\frac{\mu^2}{4}-\frac{\lfloor\omega_0 \mu \rfloor}{2} \label{s1-4}
\end{equation}
with
\begin{equation}\label{s1-5}
\omega_0=\frac{\sqrt{3}}{2\pi}-\frac{1}{6}\approx 0.108998.
\end{equation}
For $\mu\geq 1/\omega_0$, this result establishes a bound that is strictly sharper than that of P\'{o}lya's conjecture. This improvement is based on a refined lattice-point counting estimate, which is a pivot part of  their proof of the  Dirichlet P\'{o}lya's conjecture for planar annuli.

Later, Jiang and Lin's \cite[Theorem 2.3]{JL:2025} obtained an improvement for the Dirichlet Laplacian on the product domain $\Omega_1\times\Omega_2\subset \mathbb{R}^{d_1}\times\mathbb{R}$, with $d_1\geq 1$ and $\Omega_1$, $\Omega_2$ bounded, under the assumption that the Dirichlet P\'{o}lya's conjecture holds for $\Omega_1$.

Another main result of this paper is a series of improvement of P\'{o}lya's conjecture
for disks, balls, and cylinders.

\begin{theorem} \label{thm333}
The following improvement of P\'{o}lya's conjecture holds for the Neumann Laplacian on the unit disk
\begin{equation*}
\mathscr{N}^{\mathtt{N}}_{\mathcal{B}_1^2}(\mu)\geq \mu^2/4+0.0014\mu
\end{equation*}
for all $\mu\geq 12$.
\end{theorem}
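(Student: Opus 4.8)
The plan is to revisit the analytic argument behind Theorem~\ref{thm999} and to retain the positive lower-order contributions that get discarded once one aims only at the bare inequality $\mathscr{N}^{\mathtt N}_{\mathcal B_1^2}(\mu)\ge\mu^2/4$. First I would use the angular-mode decomposition
\[
\mathscr{N}^{\mathtt N}_{\mathcal B_1^2}(\mu)=1+N_0(\mu)+2\sum_{m\ge 1}N_m(\mu),\qquad N_m(\mu):=\#\{k\ge 1:\ j'_{m,k}\le\mu\},
\]
where $j'_{m,k}$ denotes the $k$-th positive zero of $J_m'$, the leading $1$ accounts for the zero eigenvalue, and the factor $2$ reflects the double multiplicity of the modes $m\ge 1$; the sum is finite because $j'_{m,1}>m$. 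For each $m$ with $m<\mu$ I would bound $N_m(\mu)$ from below by an explicit function of $(m,\mu)$ built from the phase function $\tfrac1\pi\bigl(\sqrt{\mu^2-m^2}-m\arccos(m/\mu)\bigr)$ together with a correction expressed through the function $g$ of \eqref{s2-9}, controlled — exactly as in the proof of Theorem~\ref{thm999} — by the first, second \emph{and} third derivatives of $g$; the mode $m=0$ (where $J_0'=-J_1$, so $j'_{0,k}=j_{1,k}$) and the few transition modes with $m$ close to $\mu$ would be handled separately, following \cite{FLPS:2023}.

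Next I would sum the per-mode lower bounds over $0\le m\le\lfloor\mu\rfloor$ and compare the sum with the corresponding integral by Euler--Maclaurin: the integral of the phase function reproduces the Weyl term $\mu^2/4$, while the Euler--Maclaurin corrections, the summed $g$-corrections, and the $+1$ from $\lambda=0$ assemble into $\mu^2/4+h(\mu)$ for an explicit lower-order quantity $h$. In the proof of Theorem~\ref{thm999} one only needs $h(\mu)\ge 0$ for $\mu\ge 11.89$; here the point is to keep enough of the positive part of $h$ to conclude $h(\mu)\ge 0.0014\,\mu$ for every $\mu\ge 12$. Once $h$ is pinned down with an explicit, correctly-signed error term, this last inequality reduces to a finite check: a crude lower bound on $h(\mu)/\mu$ for large $\mu$ together with a direct numerical verification on a bounded interval above $12$.

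The main obstacle will be the passage from $h\ge 0$ to $h(\mu)\ge 0.0014\,\mu$, that is, making $h$ explicit with a genuinely positive lower bound. The method is lossy — the true second Weyl term for the Neumann disk is $\mu/2$, which dwarfs $0.0014\,\mu$ — so the inequality chain carries very little slack near its threshold, and the binding case is $\mu$ just above $12$. There, each per-mode estimate must be used essentially at its sharpest (this is precisely why the third derivative of $g$ is required), the separate treatment of the transition modes $m\approx\mu$ and of the smallest values of $m$ must lose as little as possible, and the Euler--Maclaurin remainder must be bounded by an explicit sign-definite quantity rather than merely $O(1)$. Calibrating the constants so that $h(12)\ge 0.0014\cdot 12$ while simultaneously keeping $h(\mu)/\mu\ge 0.0014$ for all larger $\mu$ is the delicate part; the remainder is bookkeeping already present in the analysis behind Theorems~\ref{thm222} and~\ref{thm999}.
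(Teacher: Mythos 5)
You have correctly identified the paper's strategy in outline---reuse the analysis behind Theorem \ref{thm999} and keep the positive lower-order surplus, with the binding case at $\mu$ near $12$---but your proposal stops exactly where the content of the theorem begins. The decisive step is an \emph{explicit} linear-in-$\mu$ lower bound for the surplus, and you never produce it: you yourself flag ``making $h$ explicit with a genuinely positive lower bound'' and ``calibrating the constants'' as the delicate part, and you defer it to a crude asymptotic bound plus ``a direct numerical verification on a bounded interval above $12$.'' No such interval check is needed (nor would it be clearly rigorous as stated, and it would reintroduce the computer assistance the analytic argument is designed to avoid). In the paper, Theorem \ref{s2-7} combined with $\mathscr{N}^{\mathtt N}_{\mathcal B_1^2}(\mu)\ge\mathscr{P}^{\mathtt N}_2(\mu)$ gives $\mathscr{P}^{\mathtt N}_2(\mu)\ge\mu^2/4+\mathcal R(\mu)$ with $\mathcal R$ explicit, and \cite[Lemma 4.8]{FLPS:2023} (i.e.\ $M_0>\mu\cos\sigma$ under \eqref{s2-8}) turns \eqref{s2-10} into the affine bound
\begin{equation*}
\mathcal R(\mu)>\left(\frac{24\pi-7\sqrt2}{8(4\pi-\sqrt2)}\cos\sigma-\frac{8\pi-3\sqrt2}{8(4\pi-\sqrt2)}-\frac1\pi\right)\mu-\frac34+\frac{\cos\sigma}{4\pi},
\end{equation*}
so the whole verification collapses to evaluating two explicit constants: with $\sigma=0.588$ the slope is about $0.0584$, condition \eqref{s2-8} holds for $\mu\ge12$, and at $\mu=12$ the right-hand side already exceeds $0.0014\cdot12=0.0168$; since the slope exceeds $0.0014$, the inequality persists for all larger $\mu$. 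Without an explicit bound of this kind, your plan cannot certify the constant $0.0014$, precisely because (as you note) the inequality is essentially tight at $\mu=12$.

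A secondary issue is the mechanism you propose for the sum-versus-integral comparison. Euler--Maclaurin applied to the phase function $\frac1\pi\bigl(\sqrt{\mu^2-m^2}-m\arccos(m/\mu)\bigr)=G_\mu(m)$ only controls $\sum_m G_\mu(m)$, whereas the quantity that actually bounds $\mathscr{N}^{\mathtt N}$ from below is $\sum_m\lfloor G_\mu(m)+\tfrac34\rfloor$; the floors are where all the difficulty (and all the gain) lives. The paper handles them through the band decomposition at the levels $M_k$ and the trapezoidal floor-sum estimates (Lemmas \ref{s2-1}--\ref{s2-6} and Corollary \ref{s2-4}), where the third derivative of $g$ enters via Lemma \ref{s2-1} to produce the per-band gain $\frac{M_n-M_{n+1}}{2(2-\beta_n)}$ and the improved curly-triangle constant $\frac{8\pi-3\sqrt2}{16(4\pi-\sqrt2)}\approx0.117$ in place of $\tfrac18$. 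Your sketch gestures at ``$g$-corrections controlled by the first, second and third derivatives,'' which is the right spirit, but without reproducing this floor-aware machinery (or an equivalent), the passage from the Weyl term $\mu^2/4$ to a quantified surplus $0.0014\mu$ at the threshold $\mu=12$ does not go through.
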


We extend \cite[Theorem 1.5]{FLPS:2025} in two directions: improved estimates in two dimensions and a generalization to higher dimensions.

\begin{theorem} \label{thm777}
The following improvement of P\'{o}lya's conjecture holds for the Dirichlet Laplacian on the unit disk
\begin{equation*}
    \mathscr{N}^{\mathtt{D}}_{\mathcal{B}_1^2}(\mu)\leq \frac{\mu^2}{4}-\frac{1}{8}\lfloor\omega_0 \mu \rfloor-\frac{3}{8}\lfloor\omega_1 \mu \rfloor
\end{equation*}
for all $\mu\geq 0$, with $\omega_0$ defined by \eqref{s1-5} and
\begin{equation}\label{s1-6}
    \omega_1=\frac{\sqrt{2+\sqrt{2}}}{2\pi}-\frac{3\sqrt{2-\sqrt{2}}}{16}\approx 0.150574.
\end{equation}
\end{theorem}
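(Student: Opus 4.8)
The plan is to run, with sharper accounting, the Bessel-zero counting scheme underlying \cite[Theorem 1.5]{FLPS:2025}. Writing $j_{n,k}$ for the $k$-th positive zero of $J_n$, and recalling that the Dirichlet eigenvalue $j_{n,k}^2$ has multiplicity $2$ for $n\geq 1$ and $1$ for $n=0$,
\begin{equation*}
\mathscr{N}^{\mathtt{D}}_{\mathcal{B}_1^2}(\mu)=\sum_{n\in\mathbb{Z}}N_{|n|}(\mu),\qquad N_n(\mu):=\#\{k\geq 1:\ j_{n,k}\leq\mu\},
\end{equation*}
and the first step is to insert the uniform upper bound for Bessel-zero counts used in \cite{FLPS:2023,FLPS:2025}: for $0\leq n\leq\mu$,
\begin{equation*}
N_n(\mu)\leq\Bigl\lfloor\tfrac{\mu}{\pi}\,\varphi(n/\mu)+c\Bigr\rfloor,\qquad\varphi(x)=\sqrt{1-x^2}-x\arccos x,
\end{equation*}
with the appropriate constant $c$ (and $N_n(\mu)=0$ for $n\geq\mu$). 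Here $\varphi'(x)=-\arccos x$, $\varphi''>0$ on $(0,1)$, and $\int_0^1\varphi=\pi/8$; the last identity is exactly what turns the main term into $\mu^2/4$.

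Second, pass to the row (level) form of the resulting lattice-point count. Letting $n_k$ be the solution of $\tfrac{\mu}{\pi}\varphi(n_k/\mu)+c=k$, i.e.\ $n_k=\mu\,\varphi^{-1}\bigl(\tfrac{\pi}{\mu}(k-c)\bigr)$ (defined while $n_k\geq 0$), one obtains
\begin{equation*}
\mathscr{N}^{\mathtt{D}}_{\mathcal{B}_1^2}(\mu)\leq\sum_{k\geq 1}\bigl(2\lfloor n_k\rfloor+1\bigr)=\sum_{k\geq 1}\bigl(2n_k+1\bigr)-2\sum_{k\geq 1}\{n_k\},
\end{equation*}
and an Euler--Maclaurin comparison of $\sum_k(2n_k+1)$ with its integral produces $\mu^2/4$ plus an explicit, controllable remainder. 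Thus everything is reduced to a lower bound for the fractional-part sum $\sum_{k\geq 1}\{n_k\}$: the bound needed merely to recover P\'olya's inequality amounts to ``average at least $1/4$'', and the theorem asks for a provable surplus of $\tfrac18\lfloor\omega_0\mu\rfloor+\tfrac38\lfloor\omega_1\mu\rfloor$ over that baseline.

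Third, and this is the crux, one analyses the two lowest (widest) bands of levels. The per-level width decrement is $n_k-n_{k+1}\approx\pi/\arccos(n_k/\mu)$, which increases from $2$ (at $n_k=0$) to $+\infty$ (as $n_k\to\mu$), passing through $8/3$ exactly at $n_k=\mu\cos(3\pi/8)$ and through $3$ exactly at $n_k=\mu/2$. Hence the band $\{k:\ n_k\geq\mu\cos(3\pi/8)\}$ has $\approx\tfrac{\mu}{\pi}\varphi(\cos(3\pi/8))=\omega_1\mu$ levels, and its sub-band $\{k:\ n_k\geq\mu/2\}$ has $\approx\tfrac{\mu}{\pi}\varphi(1/2)=\omega_0\mu$ levels --- the exact identities $\varphi(1/2)=\tfrac{\sqrt3}{2}-\tfrac{\pi}{6}=\pi\omega_0$ and $\varphi(\cos(3\pi/8))=\sin(3\pi/8)-\tfrac{3\pi}{8}\cos(3\pi/8)=\pi\omega_1$ are precisely what fix the band lengths and hence the appearance of $\omega_0,\omega_1$. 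On such a band the decrement is close to a small rational, so blocks of three consecutive values $\{n_k\}$ cannot all lie near $0$ (their consecutive gaps are, up to a controlled drift, fixed non-integer quantities, forcing them to spread out); quantifying this uniformly --- in the spirit of the refined lattice-point estimate of \cite{FLPS:2025}, here carried across the whole band rather than a single row --- yields a surplus of at least $\tfrac38$ per level on the $\lfloor\omega_1\mu\rfloor$-level band and an extra $\tfrac18$ per level on the $\lfloor\omega_0\mu\rfloor$-level sub-band (compatibly with the single surplus $\tfrac12=\tfrac18+\tfrac38$ extracted over the shorter band in \cite[Theorem 1.5]{FLPS:2025}). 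Summing gives the claimed gain. For $\mu<1/\omega_1$ both floors vanish and the statement is just the (known) Dirichlet P\'olya inequality $\mathscr{N}^{\mathtt{D}}_{\mathcal{B}_1^2}(\mu)\leq\mu^2/4$, so one may assume $\mu\geq 1/\omega_1$, where the band analysis is non-vacuous.

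The main obstacle is making this last part \emph{exact} --- losing nothing --- and uniform for every $\mu\geq1/\omega_1$. The decrement only drifts slowly through $8/3$ and $3$ rather than sitting there, so the fractional-part blocks are merely approximately periodic, and one must choose the band endpoints and estimate the Euler--Maclaurin and boundary remainders (including the $O(\mu^{1/3})$ contribution from the rows with $n_k$ near $\mu$, and the two transition zones) tightly enough that the clean constants $\tfrac18,\tfrac38,\omega_0,\omega_1$ survive with no slack. This is also where the choice of the rationals $1/3$ and $3/8$ is forced: only for these does the block-spreading lower bound persist over a band whose length is comparable to $\mu$, which is what is needed for the surplus to be of order $\mu$.
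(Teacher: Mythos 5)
There is a genuine gap, and it sits exactly where you place the ``crux''. In your third step you assert that, because the level decrement $n_k-n_{k+1}\approx\pi/\arccos(n_k/\mu)$ drifts slowly through $3$ (resp.\ $8/3$), blocks of consecutive fractional parts $\{n_k\}$ must spread out enough to yield an \emph{exact} surplus of $1/2$ (resp.\ $3/8$) per level, uniformly in $\mu$ and with no slack. That assertion \emph{is} the theorem; it does not follow from the approximate periodicity you describe. With a drifting decrement nothing a priori prevents several consecutive $\{n_k\}$ from being simultaneously small, and any equidistribution-type quantification would come with error terms, whereas the claimed inequality has clean constants and must hold for every $\mu\geq 0$, including $\mu\approx 1/\omega_1\approx 6.6$ where asymptotics give nothing. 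Likewise, the Euler--Maclaurin comparison that is supposed to produce ``$\mu^2/4$ plus an explicit, controllable remainder'', and the $O(\mu^{1/3})$ contribution from the rows with $n_k$ near $\mu$, are left unquantified; errors of that size cannot be absorbed by a gain of $\tfrac18\lfloor\omega_0\mu\rfloor+\tfrac38\lfloor\omega_1\mu\rfloor$ unless tracked with explicit constants, which you do not do. You acknowledge all of this in your closing paragraph, so as written the proposal is a heuristic whose central quantitative lemma is missing.

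For comparison, the paper never leaves the ``column'' picture and never needs fractional-part or Euler--Maclaurin arguments. Starting from $\mathscr{N}^{\mathtt{D}}_{\mathcal{B}_1^2}(\mu)\leq 2\,\mathbf{T}\bigl(G_\mu+\tfrac14,0,\lfloor\mu\rfloor+1\bigr)$ (Theorem 2.3 of \cite{FLPS:2023}), it proves a two-threshold variant of \cite[Theorem 3.4]{FLPS:2025} (Proposition \ref{s4-24}): if a decreasing convex $\mathcal{G}$ with Lipschitz constant $1/2$ and $\mathcal{G}(b)=0$ has local Lipschitz constant $<1/3$ on $[\gamma_1,b]$ and $<3/8$ on $[\gamma_2,b]$, then $\mathbf{T}(\mathcal{G}+\tfrac14,a,b)\leq\int_a^b\mathcal{G}-\tfrac1{16}\lfloor\mathcal{G}(\gamma_1)\rfloor-\tfrac3{16}\lfloor\mathcal{G}(\gamma_2)\rfloor$. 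The engine is a per-band floor-sum estimate (a variant of \cite[Lemma 3.5]{FLPS:2025}): on each band between consecutive integer levels of $\mathcal{G}$ on which the local Lipschitz constant is $c\in[1/3,3/8]$, the trapezoidal floor sum loses at least $3/4-\tfrac32 c$ against the integral --- exactly $1/4$ for $c=1/3$ and $3/16$ for $c=3/8$ --- with no remainder whatsoever. Applying this to $\mathcal{G}=G_\mu$ with $\gamma_1=\mu\cos(\pi/3)$ and $\gamma_2=\mu\cos(3\pi/8)$ (the same thresholds you identified, since $|G_\mu'(t)|=\tfrac1\pi\arccos(t/\mu)$) and counting the bands via $G_\mu(\gamma_1)=\omega_0\mu$, $G_\mu(\gamma_2)=\omega_1\mu$ gives the stated bound for all $\mu\geq0$ at once. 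If you want to salvage your route, the honest path is to prove a per-band statement of this kind rather than a band-wide spreading claim for the $\{n_k\}$; in its current form your step three would fail to deliver the exact constants.
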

For $\mu\geq 1/\omega_1$, the above bound is sharper than that of P\'{o}lya's conjecture.

\begin{theorem} \label{thm888}
The following improvement of P\'{o}lya's conjecture holds for the Dirichlet Laplacian on the unit ball in $\mathbb{R}^d$ ($d\geq 3$)
\begin{align*}
    \mathscr{N}^{\mathtt{D}}_{\mathcal{B}_1^d}(\mu)\leq &\frac{\omega_{d}^2}{(2\pi)^{d}} \mu^d-\frac{1}{2}\lfloor\omega_0 \mu \rfloor \binom{\lfloor \frac{\mu}{2}-\frac{d}{2}+1\rfloor+d-2}{d-2}\\
    &-\frac{3}{8}\left(\lfloor\omega_1 \mu \rfloor-\lfloor\omega_0 \mu \rfloor \right)\binom{\lfloor \cos(\frac{3}{8}\pi)\mu-\frac{d}{2}+1\rfloor+d-2}{d-2}
\end{align*}
for all $\mu\geq 0$, with $\omega_0$ and $\omega_1$ defined by \eqref{s1-5} and \eqref{s1-6} respectively. Here, we follow the convention that $\binom{n}{k} = 0$ for $k > n$.
\end{theorem}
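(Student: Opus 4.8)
The plan is to turn the eigenvalue count on the ball into a sum of Bessel‑zero counts and then feed in a refined bound for the latter, in the spirit of the planar results above. Separating variables, the Dirichlet eigenvalues of $\mathcal{B}_1^d$ are $j_{\nu_\ell,k}^2$ ($k\ge 1$) with $\nu_\ell:=\frac{d-2}{2}+\ell$, each of multiplicity $h_\ell(d)=\binom{\ell+d-2}{d-2}+\binom{\ell+d-3}{d-2}$, the dimension of the space of degree‑$\ell$ spherical harmonics on $S^{d-1}$; hence
\begin{equation*}
\mathscr{N}^{\mathtt{D}}_{\mathcal{B}_1^d}(\mu)=\sum_{\ell\ge 0}h_\ell(d)\,N_\ell(\mu),\qquad N_\ell(\mu):=\#\{k\ge 1:\ j_{\nu_\ell,k}\le\mu\},
\end{equation*}
a finite sum since $j_{\nu,1}>\nu$. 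Everything reduces to an upper bound for $N_\ell(\mu)$ that is summable against the weights $h_\ell(d)$; a reduction to the planar Theorem~\ref{thm777} by a product or slicing argument seems unavailable, the Bessel order $\frac{d-2}{2}+\ell$ not being an integer, so I would run the analysis directly in $\mathbb{R}^d$.

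The engine is a sharp upper bound for the Bessel‑zero counting function. Building on the study of the function $g$ of \eqref{s2-9}---and, as in the proof of Theorem~\ref{thm999}, extracting more from it by exploiting its third derivative---I would seek, for $\nu\ge\frac12$ and $\mu>\nu$, an estimate of the shape
\begin{equation*}
N(\nu,\mu)\le \frac{1}{\pi}\Bigl(\sqrt{\mu^2-\nu^2}-\nu\arccos(\nu/\mu)\Bigr)+(\text{bounded term})-S(\nu,\mu),
\end{equation*}
where $S(\nu,\mu)\ge 0$ is an explicit ``saving'' switched on once $\nu/\mu$ drops below the successive thresholds $\sin(\pi/6)=\frac12$ and $\sin(\pi/8)=\cos(3\pi/8)$. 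The constants $\omega_0,\omega_1$ are dictated by this geometry: with $F(t):=\sqrt{1-t^2}-t\arccos t$, so that $\frac{1}{\pi}(\sqrt{\mu^2-\nu^2}-\nu\arccos(\nu/\mu))=\frac{\mu}{\pi}F(\nu/\mu)$, one checks $\omega_0=\frac{1}{\pi}F(\sin(\pi/6))$ and $\omega_1=\frac{1}{\pi}F(\sin(\pi/8))$, so that $\lfloor\omega_0\mu\rfloor$ and $\lfloor\omega_1\mu\rfloor$ are, up to lower‑order terms, the zero‑counts $N(\mu/2,\mu)$ and $N(\cos(3\pi/8)\,\mu,\mu)$. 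Quantitatively, for $\nu\le\mu/2$ one wants $S(\nu,\mu)\ge\frac12\lfloor\omega_0\mu\rfloor$, with the further gain $\frac38(\lfloor\omega_1\mu\rfloor-\lfloor\omega_0\mu\rfloor)$ once $\nu\le\cos(3\pi/8)\,\mu$. I expect this to be the main obstacle: establishing a fully rigorous, uniform two‑level refinement of the Bessel‑zero bound, in particular through the transitional regimes $\nu\approx\mu$, $\nu\approx\mu/2$ and $\nu\approx\cos(3\pi/8)\,\mu$---precisely the sort of delicate special‑function analysis forming the technical core of Theorems~\ref{thm999} and \ref{thm777}.

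Granting such a bound, the summation splits into a main part and the savings. Summing the ``main part'' of $N_\ell(\mu)$ against $h_\ell(d)$ over all $\ell$---equivalently, reorganizing $\sum_\ell h_\ell(d)N_\ell(\mu)$ as a count of integer points $(\ell,k)$ in the region $\{j_{\nu_\ell,k}\le\mu\}$---reproduces Filonov, Levitin, Polterovich, and Sher's proof of the Dirichlet P\'{o}lya conjecture for balls \cite{FLPS:2023}, contributing the term $\frac{\omega_d^2}{(2\pi)^d}\mu^d$. It then remains to sum $h_\ell(d)\,S(\nu_\ell,\mu)$. The $\frac12\lfloor\omega_0\mu\rfloor$‑increment is present for every $\ell$ with $\nu_\ell\le\mu/2$, i.e. $0\le\ell\le L_0:=\lfloor\tfrac{\mu}{2}-\tfrac{d}{2}+1\rfloor$; after the bookkeeping of the previous step---which, via the interlacing of zeros of Bessel functions of consecutive orders, attaches this saving to the Pascal‑difference weight $\binom{\ell+d-3}{d-3}=\binom{\ell+d-2}{d-2}-\binom{\ell+d-3}{d-2}$ rather than to the full $h_\ell(d)$---the hockey‑stick identity $\sum_{\ell=0}^{L_0}\binom{\ell+d-3}{d-3}=\binom{L_0+d-2}{d-2}$ collapses the sum to the first correction term. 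The additional $\frac38(\lfloor\omega_1\mu\rfloor-\lfloor\omega_0\mu\rfloor)$‑increment is present for $0\le\ell\le L_1:=\lfloor\cos(\tfrac{3}{8}\pi)\mu-\tfrac{d}{2}+1\rfloor$, and the same identity gives the second correction term $\binom{L_1+d-2}{d-2}$.

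Finally, for $\mu$ below the first threshold both binomial coefficients (equivalently, both floors) vanish under the convention $\binom{n}{k}=0$ for $k>n$, and the assertion reduces to the already‑known Dirichlet P\'{o}lya conjecture for the ball; thus the inequality holds for all $\mu\ge 0$. Beyond the refined Bessel bound, the remaining care lies in propagating the savings through the interlacing argument so they land exactly on the weight $\binom{\ell+d-3}{d-3}$, and in checking that no portion of the deficit is lost at the integer‑rounding steps or in the interplay of the two floor functions $\lfloor\omega_0\mu\rfloor$ and $\lfloor\omega_1\mu\rfloor$.
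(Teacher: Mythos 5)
Your overall bookkeeping (thresholds $\mu/2$ and $\cos(\tfrac{3}{8}\pi)\mu$, the identification $\omega_0=g(1/2)$, $\omega_1=g(\cos\tfrac{3\pi}{8})$, the weight $\binom{\ell+d-3}{d-3}$ and the hockey-stick collapse) matches the shape of the result, but the engine you propose to power it is both unproven and, as stated, false. You ask for a per-order bound $N(\nu,\mu)\le \frac{\mu}{\pi}F(\nu/\mu)+O(1)-S(\nu,\mu)$ with $S(\nu,\mu)\ge\frac12\lfloor\omega_0\mu\rfloor$ whenever $\nu\le\mu/2$. No such bound can exist: by the uniform asymptotics of Bessel zeros (the same asymptotics from which $g$ arises), for $\nu=\mu\cos\theta$ with $\theta$ bounded away from $0$ one has $N(\nu,\mu)=\frac{\mu}{\pi}(\sin\theta-\theta\cos\theta)+O(1)=G_\mu(\nu)+O(1)$, so the deviation of each individual zero count from its ``Weyl term'' is bounded, and a saving of size $\frac12\lfloor\omega_0\mu\rfloor\approx 0.054\,\mu$ per order is impossible (at $\nu=\mu/2$ it would cut the true count roughly in half). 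Since you yourself flag this estimate as the main obstacle and proceed only by ``granting such a bound,'' the proof has a genuine gap at its core, independent of the falsity issue.

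The actual mechanism is different: the savings are not a property of any single counting function $N(\nu_\ell,\mu)$ but an aggregate staircase effect in the lattice-point bound. One starts from \cite[Theorems 2.3 and 7.1]{FLPS:2023}, $\mathscr{N}^{\mathtt{D}}_{\mathcal{B}_1^d}(\mu)\le\sum_{n}\binom{n+d-3}{d-3}\,\widetilde{\mathscr{P}}^{\mathtt{D}}_{n+\frac d2-1}(\mu)$, where each block $\widetilde{\mathscr{P}}^{\mathtt{D}}_{r}(\mu)=2\,\mathbf{T}(\widetilde G_{\mu,r}+\frac14,0,\lfloor\mu-r\rfloor+1)$ is a two-dimensional-type trapezoidal floor sum over all orders at least $r$. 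The gain comes from comparing this floor sum with $\int G_\mu$: roughly a fixed fraction is saved per integer level crossed in the regimes where the Lipschitz constant of $G_\mu$ is below $1/3$ (resp. $3/8$), and the number of such levels is $\lfloor\omega_0\mu\rfloor$ (resp. $\lfloor\omega_1\mu\rfloor$). This is exactly Proposition \ref{s4-24} together with \cite[Theorems 3.3 and 3.4]{FLPS:2025}, applied blockwise after splitting the range of $n$ at $\lfloor\cos(\frac38\pi)\mu-\frac d2+1\rfloor$ and $\lfloor\frac\mu2-\frac d2+1\rfloor$; each block with starting order below $\mu/2$ contributes a saving $\frac12\lfloor\omega_0\mu\rfloor$ (and $\frac18\lfloor\omega_0\mu\rfloor+\frac38\lfloor\omega_1\mu\rfloor$ if it starts below $\cos(\frac38\pi)\mu$), and only then does your hockey-stick summation produce the stated binomial factors. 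No new ``delicate special-function analysis'' of Bessel zeros is needed, and in particular your appeal to interlacing to re-attach a per-order saving to the Pascal-difference weight does not substitute for this blockwise floor-sum argument.
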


We establish the following improved form of P\'{o}lya's conjecture for cylinders. 

\begin{theorem} \label{thm444}
For all $\mu\geq 0$, the following improvement of P\'{o}lya's conjecture holds for the Dirichlet Laplacian on cylinders in $\mathbb{R}^3$
\begin{equation*}
    \mathscr{N}^{\mathtt{D}}_{\mathcal{C}_{R,L}^3}(\mu)\leq \frac{1}{6\pi} R^2 L\mu^3-\frac{\pi^2 R^2}{8L^2} \left\lfloor\frac{L\mu}{\pi} \right\rfloor\left(\left\lfloor\frac{L\mu}{\pi} \right\rfloor+\frac{1}{3} \right)
\end{equation*}
and on cylinders in $\mathbb{R}^d$ ($d\geq 4$)
\begin{equation}
\begin{split}
    \mathscr{N}^{\mathtt{D}}_{\mathcal{C}_{R,L}^d}(\mu)<&\frac{\omega_{d}}{(2\pi)^{d}}\left|\mathcal{C}_{R,L}^d\right| \mu^d-\frac{(d-1)\omega_{d-1}}{4(2\pi)^{d-1}L} \left|\mathcal{C}_{R,L}^d\right|\cdot\\
    &\left(\frac{\pi}{L\mu}\left\lfloor\frac{L\mu}{\pi \sqrt{d-2}} \right\rfloor\right)^2 \left(1- A_1\left(\frac{\pi}{L\mu}\left\lfloor\frac{L\mu}{\pi \sqrt{d-2}} \right\rfloor\right)^2\right)\mu^{d-1}
\end{split} \label{s1-3}
\end{equation}
with
\begin{equation*}
    A_1=\left\{
            \begin{array}{ll}
           \frac{d-3}{4},  & \textrm{if $d\geq 5$,}\\
           \frac{2}{3},            &\textrm{if $d=4$.}
            \end{array}
        \right.
\end{equation*}
Furthermore, if  $\mu\geq \pi \sqrt{d-2}/L$ and $d\geq 11$, the inequality \eqref{s1-3} still holds when the term below is subtracted from its right side
\begin{equation*}
    \frac{\omega_{d-1}}{2(2\pi)^{d-1}L}\left|\mathcal{C}_{R,L}^d\right|\left(1-\left(\frac{\pi}{L\mu}\left(\left\lfloor\frac{L\mu}{\pi \sqrt{d-2}} \right\rfloor+2\right) \right)^2\right)^{\frac{d-1}{2}} \mu^{d-1}.
\end{equation*}

The following improvement of P\'{o}lya's conjecture holds for the Neumann Laplacian on cylinders in $\mathbb{R}^3$
\begin{equation*}
\mathscr{N}^{\mathtt{N}}_{\mathcal{C}_{R,L}^3}(\mu) \geq \frac{1}{6\pi} R^2 L\mu^3+\left\{
            \begin{array}{ll}
           \frac{\pi R^2}{8L} \left( \frac{L\mu}{\pi}-\frac{1}{3}\right)\mu,                                         & \textrm{if $\frac{L\mu}{\pi} \geq \frac{1}{3}$,}\\
           \frac{R^2}{6} \left( \frac{3}{2}-\frac{L\mu}{\pi}\right)\mu^2,                                   &\textrm{if $0\leq \frac{L\mu}{\pi} \leq \frac{3}{2}$.}
            \end{array}
        \right.
\end{equation*}
\end{theorem}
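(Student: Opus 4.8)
The plan is to reduce everything to one-dimensional estimates by separation of variables. Since $\mathcal{C}_{R,L}^d=\mathcal{B}_R^{d-1}\times[0,L]$, the eigenvalues of the Laplacian on the cylinder are precisely the numbers $\lambda^2+(k\pi/L)^2$, where $\lambda^2$ runs over the eigenvalues on $\mathcal{B}_R^{d-1}$ under the same boundary condition and $k$ runs over $\{1,2,\dots\}$ (Dirichlet) or $\{0,1,\dots\}$ (Neumann). Hence, with the convention that a summand vanishes as soon as $k>L\mu/\pi$,
\[
\mathscr{N}^{\mathtt{D}}_{\mathcal{C}_{R,L}^d}(\mu)=\sum_{k\ge 1}\mathscr{N}^{\mathtt{D}}_{\mathcal{B}_R^{d-1}}\!\Big(\sqrt{\mu^2-(k\pi/L)^2}\Big),\qquad
\mathscr{N}^{\mathtt{N}}_{\mathcal{C}_{R,L}^d}(\mu)=\sum_{k\ge 0}\mathscr{N}^{\mathtt{N}}_{\mathcal{B}_R^{d-1}}\!\Big(\sqrt{\mu^2-(k\pi/L)^2}\Big).
\]
Into the Dirichlet sum I insert the Dirichlet Pólya inequality for balls of all dimensions (Filonov--Levitin--Polterovich--Sher), $\mathscr{N}^{\mathtt{D}}_{\mathcal{B}_R^{d-1}}(\nu)\le C_{d-1}|\mathcal{B}_R^{d-1}|\nu^{d-1}$; into the Neumann sum, when $d=3$, I insert the Neumann Pólya inequality for the disk $\mathscr{N}^{\mathtt{N}}_{\mathcal{B}_R^2}(\nu)\ge \tfrac14R^2\nu^2$, valid for every $\nu\ge 0$ by rescaling (combining Theorem~\ref{thm222} or Theorem~\ref{thm999} with the computer-assisted check on the middle range and the elementary small-parameter range); no improvement of disk Pólya such as Theorem~\ref{thm333} is needed. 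Everything then reduces to estimating, from above (Dirichlet) or from below (Neumann), the scalar sum $\sum_k f(k)$ with $f(x)=\big(\mu^2-(x\pi/L)^2\big)_+^{(d-1)/2}$.

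For $d=3$, under either boundary condition, $f$ is (on its support) quadratic, so $\sum_k f(k)$ is summed in closed form via $\sum_{k\le N}k^2=\tfrac16N(N+1)(2N+1)$ with $N=\lfloor L\mu/\pi\rfloor$, and the claimed inequality becomes, after setting $t=L\mu/\pi$, a polynomial inequality in $t,N$ on the range $t\in[N,N+1]$: in the Dirichlet case it collapses to $(t-N)^2(2t+N)\ge 0$; in the Neumann case the first branch ($t\ge \tfrac13$) collapses to $-4(t-N)(t-N-1)\big(t+\tfrac{2N+1}{4}\big)\ge 0$, immediate on $[N,N+1]$, while the second branch ($t\le \tfrac32$) is checked directly for $N\in\{0,1\}$, the case $N=0$ being an identity. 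In particular the Neumann Pólya conjecture for cylinders in $\mathbb{R}^3$ follows at once, the extra term being nonnegative on the indicated ranges.

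For $d\ge 4$ (Dirichlet) the shape of $f$ does the work: computing $f''$ shows $f$ concave on $[0,a]$ and convex on $[a,\infty)$ with $a=L\mu/(\pi\sqrt{d-2})$ --- exactly where the floor in \eqref{s1-3} sits --- and $f\in C^1$ (with $f'\to 0$) across $L\mu/\pi$, so convexity extends to all of $[a,\infty)$. With $M=\lfloor a\rfloor$ (so $M\le a<M+1$), the chord estimate $\int_{k-1}^k(f(x)-f(k))\,dx\ge\tfrac12(f(k-1)-f(k))$ on the concave block $1\le k\le M$ telescopes to $\tfrac12(f(0)-f(M))$, while mere monotonicity handles the rest, giving $\sum_{k\ge 1}f(k)\le\int_0^\infty f-\tfrac12(f(0)-f(M))$. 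The substitution $x=(L\mu/\pi)\sin\theta$ and the Beta-function identity make $C_{d-1}|\mathcal{B}_R^{d-1}|\int_0^\infty f$ equal to the Weyl main term $C_d|\mathcal{C}_{R,L}^d|\mu^d$; and with $s=\pi M/(L\mu)\in[0,1/\sqrt{d-2}]$ one has $\tfrac12(f(0)-f(M))=\tfrac12\mu^{d-1}(1-(1-s^2)^{(d-1)/2})$. So \eqref{s1-3} follows once $\tfrac12(1-(1-u)^{(d-1)/2})\ge\tfrac{d-1}{4}u(1-A_1u)$ for $u\in[0,1/(d-2)]$: for $d\ge 5$ this is the second-order Taylor expansion of $(1-u)^{(d-1)/2}$ with Lagrange remainder, whose third derivative has a fixed sign since the exponent is $\ge 2$, and one gets $A_1=\tfrac{d-3}{4}$; for $d=4$ the exponent $\tfrac32<2$ flips the remainder's sign, so I verify by hand that $u\mapsto u^2-(1-u)^{3/2}+1-\tfrac32u$ vanishes to second order at $0$ and is convex on $[0,\tfrac12]$, giving $A_1=\tfrac23$. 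For the ``furthermore'' refinement I sharpen the convex block: on $[M+1,\infty)$, $f$ is convex with $f'$ increasing to $0$, so $\int_{k-1}^k(f(x)-f(k))\,dx\ge-\tfrac12f'(k)$ for $k\ge M+2$, and $\sum_{k\ge M+2}f'(k)\le\int_{M+2}^\infty f'=-f(M+2)$, yielding an extra gain $\tfrac12f(M+2)$ --- precisely the extra term subtracted in the statement --- provided $M\ge 1$ (i.e.\ $\mu\ge\pi\sqrt{d-2}/L$) and $d$ is large. (The resulting inequality is strict for $\mu>0$.)

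The conceptual part --- separation of variables, insertion of Pólya for balls, reduction to a scalar sum --- is routine; the real work is the bookkeeping that makes the floors in the statement appear on the nose, together with the closing elementary inequalities. I expect the two delicate points to be: (i) the inequality $\tfrac12(1-(1-u)^{(d-1)/2})\ge\tfrac{d-1}{4}u(1-A_1u)$, in particular isolating the exceptional dimension $d=4$ where the clean Taylor-remainder argument fails and the separate value $A_1=\tfrac23$ is forced; and (ii) the ``furthermore'' sharpening, where the concave/convex split must be placed exactly at $\lfloor L\mu/(\pi\sqrt{d-2})\rfloor$ and one must avoid invoking convexity on the junction interval $[M,M+1]$, which still straddles the inflection point of $f$.
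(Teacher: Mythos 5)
Your proposal is correct and follows essentially the same route as the paper's: separation of variables, insertion of P\'olya's inequality for the $(d-1)$-dimensional cross-section, exact evaluation of the quadratic sum for $d=3$, and for $d\ge 4$ a comparison of $\sum_k f(k)$ with $\int f$ corrected by chord/triangle gains on the concave part of $f$ up to $M=\lfloor L\mu/(\pi\sqrt{d-2})\rfloor$ (same Taylor argument for $d\ge5$, separate elementary check for $d=4$), plus the tangent-line gain $\tfrac12 f(M+2)$ on the convex part in the $d\ge 11$ refinement. The one point you leave implicit --- that $d\ge 11$ together with $\mu\ge\pi\sqrt{d-2}/L$ forces $M+2\le L\mu/\pi$, so the subtracted term is well defined and equals $\tfrac12 f(M+2)$ --- is precisely how the paper uses $d\ge 11$, via $\tfrac{L\mu}{\pi}-\tfrac{L\mu}{\pi\sqrt{d-2}}\ge\sqrt{d-2}-1\ge 2$.
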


\begin{remark}
    In Section \ref{s4.2}  we prove an improvement of P\'{o}lya's conjecture for product domains slightly more general than cylinders, though we do not attempt to extend the result to a fully general setting.  Theorem \ref{thm444} follows directly from Theorems \ref{thm555} and \ref{thm666} below combined with \cite[Theorem 1.2 and Corollary 1.6]{FLPS:2023}.

While our Theorem \ref{thm555} addresses the Dirichlet case in a manner analogous to Jiang and Lin's Theorem 2.3, it relies only on elementary analysis and leads to distinct bounds. The corresponding Neumann case is handled in our Theorem \ref{thm666}.

    From the proof of Theorem \ref{thm666}, it is easy to observe that: the Neumann P\'{o}lya's conjecture holds for the product $\mathscr{D}\times I$ whenever it holds for $\mathscr{D}\subset \mathbb{R}^{d-1}$, with $d\geq 3$, and $I$ is a finite interval.
\end{remark}

Theorem \ref{thm444} directly confirms the Neumann P\'{o}lya's conjecture for cylinders in $\mathbb{R}^3$ (while the Dirichlet P\'{o}lya's conjecture is already known to hold for cylinders in $\mathbb{R}^d$, $d\geq 3$, as a consequence of \cite[Theorem 1.2]{FLPS:2023} and \cite[Theorem 2.8]{Laptev:1997}).

\begin{corollary}
    The Neumann P\'{o}lya's conjecture  holds for the cylinder $\mathcal{C}_{R,L}^3$.
\end{corollary}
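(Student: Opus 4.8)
The plan is to read off the corollary directly from the Neumann part of Theorem \ref{thm444} with $d=3$. First I would record the normalizations: since $\omega_3=\tfrac{4}{3}\pi$ we get $C_3=\omega_3/(2\pi)^3=1/(6\pi^2)$, and since $|\mathcal{C}_{R,L}^3|=\pi R^2 L$ the Weyl leading term is $C_3|\mathcal{C}_{R,L}^3|\mu^3=\tfrac{1}{6\pi}R^2L\mu^3$, which is exactly the leading term on the right-hand side of the Neumann bound in Theorem \ref{thm444}. Hence the Neumann P\'olya inequality $\mathscr{N}^{\mathtt{N}}_{\mathcal{C}_{R,L}^3}(\mu)\geq C_3|\mathcal{C}_{R,L}^3|\mu^3$ (the inequality from \eqref{PolyaConj} for $\mathscr{D}=\mathcal{C}_{R,L}^3$) will follow once the correction term in Theorem \ref{thm444} is shown to be nonnegative for every $\mu\geq 0$.

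Next I would verify the sign of that correction by a case split on the size of $L\mu/\pi$. If $L\mu/\pi\geq 1/3$, the first branch $\frac{\pi R^2}{8L}\bigl(\frac{L\mu}{\pi}-\frac{1}{3}\bigr)\mu$ is manifestly $\geq 0$. If $0\leq L\mu/\pi\leq 1/3$ (in particular $\leq 3/2$, so that the second branch is admissible), the term $\frac{R^2}{6}\bigl(\frac{3}{2}-\frac{L\mu}{\pi}\bigr)\mu^2$ is likewise $\geq 0$. Since these two ranges together exhaust all $\mu\geq 0$, in every case one of the two valid lower bounds supplied by Theorem \ref{thm444} carries a nonnegative correction; dropping it gives precisely $\mathscr{N}^{\mathtt{N}}_{\mathcal{C}_{R,L}^3}(\mu)\geq \tfrac{1}{6\pi}R^2L\mu^3=C_3|\mathcal{C}_{R,L}^3|\mu^3$, i.e. the Neumann P\'olya conjecture for $\mathcal{C}_{R,L}^3$.

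There is essentially no obstacle at this stage, since the entire analytic content resides in Theorem \ref{thm444} (which, as noted in the preceding remark, is itself assembled from Theorems \ref{thm555} and \ref{thm666} together with \cite[Theorem 1.2 and Corollary 1.6]{FLPS:2023}). The only point requiring mild care is bookkeeping of conventions — in particular confirming that the domains of validity of the two branches of Theorem \ref{thm444} overlap enough to cover the small-$\mu$ regime $L\mu/\pi<1/3$, which is exactly what the case split above arranges.
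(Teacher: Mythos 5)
Your proposal is correct and matches the paper's own reasoning: the corollary is stated as a direct consequence of the Neumann part of Theorem \ref{thm444} with $d=3$, where the leading term $\frac{1}{6\pi}R^2L\mu^3$ equals $C_3|\mathcal{C}_{R,L}^3|\mu^3$ and the correction term is nonnegative on each of the two branches, which together cover all $\mu\geq 0$. Your explicit case split at $L\mu/\pi=1/3$ just spells out what the paper leaves implicit.
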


The case of sufficiently small $L$ has been established by He and Wang \cite[Theorem 1.2]{HW:2024} (see also Freitas and Salavessa \cite{FS:2023}). Our result, in contrast, applies for all $L>0$.

\begin{remark}
At the end of the Introduction, we would like to pose a potentially interesting question: what are the optimal values of the constants $C$ and $C'$ in \eqref{s1-1} and \eqref{s1-2}, depending on the regime of the spectral parameter?
\end{remark}



\section{Disks and balls} \label{s2}

\subsection{The Neumann P\'{o}lya's conjecture for disks}  \label{sec2.1}

The goal of this subsection is to prove Theorem \ref{thm999}. Throughout Section \ref{s2}, we let
\begin{equation}\label{s2-9}
g(x)=\frac{1}{\pi}\left(\sqrt{1-x^2}-x\arccos x\right), \quad 0\leq x\leq 1,
\end{equation}
and
\begin{equation*}
G_\mu(x)=\mu g\left(\frac{x}{\mu}\right).
\end{equation*}
These two functions are both strictly decreasing functions of $x$. They are fundamental in the analysis of eigenvalues of the Laplacian on disks and balls. The following lemma is a key step in our analysis, as it incorporates information not only from the first and second derivatives of $g$, but also from the third derivative.

\begin{lemma} \label{s2-1}
For $\mu>0$, $0<a<\mu/2$ and $0\leq x\leq \mu-2a$, the function $G_\mu$ satisfies
\begin{equation*}
    \frac{G_\mu(x+a)-G_\mu(x+2a)}{G_\mu(x)-G_\mu(x+a)}<1-\frac{a}{\mu}\frac{g''(x/\mu)}{|g'(x/\mu)|}
\end{equation*}
and, in particular,
\begin{equation*}
    \frac{G_\mu(x+a)-G_\mu(x+2a)}{G_\mu(x)-G_\mu(x+a)}<1-\frac{2a}{\pi\mu}.
\end{equation*}
\end{lemma}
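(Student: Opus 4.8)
\emph{Proof proposal.} The plan is to reduce the lemma to elementary calculus facts about $g$. First I would compute $g'(x)=-\arccos(x)/\pi$, $g''(x)=1/(\pi\sqrt{1-x^2})$ and $g'''(x)=x/(\pi(1-x^2)^{3/2})$, so that on $[0,1)$ one has $g'<0$, $g''>0$ and $g'''\ge 0$; the last fact says that $g''$ is strictly increasing there. Rescaling, $G_\mu'(x)=g'(x/\mu)$, $G_\mu''(x)=\mu^{-1}g''(x/\mu)>0$, and, crucially, $G_\mu''$ is increasing on $[0,\mu)$ while $|G_\mu'(x)|=\arccos(x/\mu)/\pi$ is strictly decreasing there. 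Note that $x\le\mu-2a$ and $a>0$ force $x/\mu<1$, so that no denominator below vanishes.

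Next I would express the two first-order differences as integrals and compare them. Since $G_\mu$ is decreasing, $G_\mu(x)-G_\mu(x+a)=\int_0^a|G_\mu'(x+t)|\,dt$, and likewise with $x$ replaced by $x+a$; subtracting and using the fundamental theorem of calculus together with $\tfrac{d}{du}|G_\mu'(u)|=-G_\mu''(u)$ gives
\[
D:=\big(G_\mu(x)-G_\mu(x+a)\big)-\big(G_\mu(x+a)-G_\mu(x+2a)\big)=\int_0^a\!\!\int_{x+t}^{x+a+t}G_\mu''(u)\,du\,dt.
\]
On the entire domain of this double integral one has $x\le u\le x+2a\le\mu$, so by monotonicity $G_\mu''(u)\ge G_\mu''(x)$, hence $D\ge a^2G_\mu''(x)$ (the possible singularity of $G_\mu''$ at $u=\mu$ when $x+2a=\mu$ is harmless: it is integrable and only strengthens the bound). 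On the other hand, strict monotonicity of $|G_\mu'|$ yields the strict bound $G_\mu(x)-G_\mu(x+a)=\int_0^a|G_\mu'(x+t)|\,dt<a\,|G_\mu'(x)|$.

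Combining these and using $G_\mu''(x)>0$,
\[
\frac{a\big(G_\mu(x)-G_\mu(x+a)\big)G_\mu''(x)}{|G_\mu'(x)|}<a^2G_\mu''(x)\le D=\big(G_\mu(x)-G_\mu(x+a)\big)-\big(G_\mu(x+a)-G_\mu(x+2a)\big).
\]
Dividing through by $G_\mu(x)-G_\mu(x+a)>0$ and rearranging yields
\[
\frac{G_\mu(x+a)-G_\mu(x+2a)}{G_\mu(x)-G_\mu(x+a)}<1-\frac{a\,G_\mu''(x)}{|G_\mu'(x)|}=1-\frac a\mu\,\frac{g''(x/\mu)}{|g'(x/\mu)|},
\]
which is the first inequality. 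For the second, it suffices to check that $g''(y)/|g'(y)|\ge 2/\pi$ for $y\in[0,1)$; since $g''(y)/|g'(y)|=1/(\sqrt{1-y^2}\,\arccos y)$, this amounts to $\sqrt{1-y^2}\,\arccos y\le\pi/2$, and substituting $y=\cos\theta$ with $\theta\in(0,\pi/2]$ turns it into $\theta\sin\theta\le\pi/2$, which holds because $0<\theta\le\pi/2$ and $0<\sin\theta\le 1$.

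The computation is essentially routine once arranged this way; the substantive choice is to bring in $g'''\ge 0$, i.e.\ the monotonicity of $g''$, so that the lower bound $D\ge a^2G_\mu''(x)$ carries the full constant $a^2$, and to pair it with the sharp upper bound $G_\mu(x)-G_\mu(x+a)<a\,|G_\mu'(x)|$ coming from the monotonicity of $|G_\mu'|$. The only care needed is to keep the inequalities strict and to confirm that all arguments $u/\mu$ and $x/\mu$ remain in $[0,1]$, which is exactly what the hypothesis $x\le\mu-2a$ guarantees.
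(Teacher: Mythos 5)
Your proof is correct. It rests on exactly the same three facts the paper exploits --- $g'<0$, $g''>0$ with $g''$ increasing (i.e.\ $g'''\ge 0$), and $|g'|$ decreasing --- but the mechanics are different: the paper applies the Cauchy mean value theorem to write the ratio as $G_\mu'(\theta+a)/G_\mu'(\theta)$ for an intermediate $\theta\in(x,x+a)$, uses a two-term Taylor expansion whose remainder has a favorable sign because $G_\mu'''>0$, and then pushes the resulting bound from $\theta$ back to the endpoint $x$ via the monotonicity of $g''$ and $|g'|$; you instead avoid mean value theorems altogether, writing the second difference
\[
D=\bigl(G_\mu(x)-G_\mu(x+a)\bigr)-\bigl(G_\mu(x+a)-G_\mu(x+2a)\bigr)=\int_0^a\!\!\int_{x+t}^{x+a+t}G_\mu''(u)\,\mathrm{d}u\,\mathrm{d}t
\]
and bounding it below by $a^2G_\mu''(x)$, while bounding the first difference above by $a|G_\mu'(x)|$. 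This integral-representation route is a bit more hands-on and transparent about where each monotonicity fact enters (and correctly notes that the integrable singularity of $G_\mu''$ at $u=\mu$ is harmless), whereas the paper's MVT/Taylor argument is shorter on the page. For the second inequality you verify $g''(y)/|g'(y)|=1/(\theta\sin\theta)\ge 2/\pi$ directly with $\theta=\arccos y$, which is a clean self-contained check; the paper gets the same constant by observing that $g''/|g'|$ is increasing, hence minimized at $0$. All strictness and positivity issues (e.g.\ $x+a<\mu$, $G_\mu(x)-G_\mu(x+a)>0$, $G_\mu''(x)>0$) are handled correctly by your hypotheses $0<a<\mu/2$, $x\le\mu-2a$.
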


\begin{proof}
Applying the Cauchy mean value theorem to the ratio on the left side gives
\begin{equation*}
    R(x):=\frac{G_\mu(x+a)-G_\mu(x+2a)}{G_\mu(x)-G_\mu(x+a)}=\frac{G_\mu'(\theta+a)}{G_\mu'(\theta)}
\end{equation*}
for some $\theta\in (x, x+a)$. It is easy to check for $t\in (0,1)$ that
\begin{equation*}
    g'(t)<0 \textrm{ and } g'''(t)>0.
\end{equation*}
By using the Taylor expansion of $G_\mu'(\theta+a)$ at the point $\theta$ retaining two terms plus a remainder term, we obtain
\begin{equation*}
    R(x)<1+a\frac{G_\mu''(\theta)}{G_\mu'(\theta)}= 1-\frac{a}{\mu}\frac{g''(x/\mu)}{|g'(x/\mu)|}.
\end{equation*}
Notice that $g''$ is strictly increasing and $|g'|$ is strictly decreasing. Hence, the function $-g''(x)/|g'(x)|$ attains its maximum at $x = 0$.
\end{proof}

The above lemma thus yields a bound sharper than the one following from the strict convexity of $g$, which states that
   \begin{equation*}
    G_{\mu}\left(\frac{i+j}{2} \right)<\frac{1}{2}G_{\mu}(i)+\frac{1}{2}G_{\mu}(j).
\end{equation*}

\begin{lemma} \label{s2-2}
Let $0\leq i<j\leq \mu$. Then
\begin{equation*}
    G_{\mu}\left(\frac{i+j}{2} \right)<\frac{1-\Theta}{2-\Theta}G_{\mu}(i)+\frac{1}{2-\Theta}G_{\mu}(j)
\end{equation*}
with
\begin{equation}
    \Theta=\Theta(i, j, \mu)=\frac{j-i}{2\mu}\frac{g''(i/\mu)}{|g'(i/\mu)|}. \label{s2-3}
\end{equation}
\end{lemma}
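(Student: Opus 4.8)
The statement is a repackaging of Lemma~\ref{s2-1}, so the plan is to specialize that lemma to the step size $a=(j-i)/2$ based at the point $x=i$, and then unravel the resulting ratio inequality into the asserted convex-type combination.

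\emph{Step 1: reduction to Lemma~\ref{s2-1}.} Set $a=(j-i)/2$ and $x=i$, so that $x+a=(i+j)/2$ and $x+2a=j$; the three values of $G_\mu$ appearing in Lemma~\ref{s2-1} are then exactly $G_\mu(i)$, $G_\mu((i+j)/2)$ and $G_\mu(j)$. From $0\le i<j\le\mu$ one gets $0<a$, $x\ge 0$ and $x+2a=j\le\mu$, i.e. $0\le x\le\mu-2a$; moreover $a<\mu/2$ as long as $j-i<\mu$, which holds in every case except $(i,j)=(0,\mu)$. Assuming $j-i<\mu$ for the moment, note that $\frac{a}{\mu}\frac{g''(x/\mu)}{|g'(x/\mu)|}=\frac{j-i}{2\mu}\frac{g''(i/\mu)}{|g'(i/\mu)|}=\Theta$ as in \eqref{s2-3}, so Lemma~\ref{s2-1} gives
\begin{equation*}
  \frac{G_\mu((i+j)/2)-G_\mu(j)}{G_\mu(i)-G_\mu((i+j)/2)}<1-\Theta .
\end{equation*}

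\emph{Step 2: unravelling.} Since $G_\mu$ is strictly decreasing and $i<(i+j)/2<j$, the denominator $G_\mu(i)-G_\mu((i+j)/2)$ is strictly positive, so clearing it and collecting the $G_\mu((i+j)/2)$ terms yields $(2-\Theta)\,G_\mu\big(\tfrac{i+j}{2}\big)<(1-\Theta)\,G_\mu(i)+G_\mu(j)$. It then remains to divide by $2-\Theta$, which requires checking $2-\Theta>0$ so that the inequality is not reversed; using $j\le\mu$ together with the explicit formulas $|g'(t)|=\pi^{-1}\arccos t$ and $g''(t)=\pi^{-1}(1-t^2)^{-1/2}$ (so $g''(t)/|g'(t)|=1/(\sqrt{1-t^2}\,\arccos t)$), one bounds $\Theta\le\frac{1-i/\mu}{2\sqrt{1-(i/\mu)^2}\,\arccos(i/\mu)}$, and an elementary monotonicity check in the variable $t=i/\mu\in[0,1)$ shows this is $<1$; hence $2-\Theta>1>0$ and the division produces precisely the claimed bound.

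\emph{Step 3: the degenerate endpoint.} For $(i,j)=(0,\mu)$, Lemma~\ref{s2-1} does not apply, but here $G_\mu(j)=\mu g(1)=0$ and $\Theta=1/\pi$, so the claim collapses to the single numerical inequality $g(1/2)<\frac{1-1/\pi}{2-1/\pi}\,g(0)$, i.e. $\tfrac{\sqrt3}{2}-\tfrac{\pi}{6}<\tfrac{\pi-1}{2\pi-1}$, which is checked directly. (Alternatively, the Cauchy mean value theorem and Taylor argument in the proof of Lemma~\ref{s2-1} go through verbatim even when $a=\mu/2$, since every point involved lies in the open interval $(0,\mu)$ where $g$ is smooth.) The one genuinely delicate point of the whole argument is thus of a bookkeeping nature: confirming $2-\Theta>0$ so the final division is legitimate, and separately absorbing the boundary case $(i,j)=(0,\mu)$ that escapes the hypotheses of Lemma~\ref{s2-1}.
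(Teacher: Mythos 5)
Your proof is correct and follows essentially the same route as the paper: apply Lemma~\ref{s2-1} with $x=i$, $a=(j-i)/2$, and then rearrange the resulting ratio inequality (the paper does this via the quantities $l_1,l_2,l_3$, you by clearing the denominator, which is the same algebra). Your extra checks are fine but not strictly needed --- $\Theta<1$ already follows automatically since the ratio in Lemma~\ref{s2-1} is positive --- and your separate treatment of the endpoint case $(i,j)=(0,\mu)$, where $a=\mu/2$ violates the stated hypothesis $a<\mu/2$, is a legitimate point of care that the paper's proof passes over silently.
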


\begin{proof}
For simplicity, let us define
\begin{equation*}
 l_1=G_{\mu}(i)-G_{\mu}\left(\frac{i+j}{2}\right),  \ l_2=G_{\mu}\left(\frac{i+j}{2}\right)-G_{\mu}(j) \textrm{ and } l_3=G_{\mu}(j).
\end{equation*}
Applying Lemma \ref{s2-1} with $x=i$ and $a=(j-i)/2$ yields
\begin{equation*}
    \frac{l_2}{l_1}<1-\Theta,
\end{equation*}
which implies
\begin{equation*}
    l_1+l_2>\frac{2-\Theta}{1-\Theta}l_2.
\end{equation*}
Consequently, we obtain
\begin{equation*}
    G_{\mu}\!\left(\!\frac{i+j}{2}\!\right)=l_2+l_3<\!\frac{1-\Theta}{2-\Theta}(l_1+l_2)+l_3=\frac{1-\Theta}{2-\Theta}(l_1+l_2+l_3)+\frac{1}{2-\Theta}l_3,
\end{equation*}
thus establishing the desired bound.
\end{proof}

As defined in \cite[Definition 1.3]{FLPS:2025}, for $a,b\in\mathbb{Z}$ with $a<b$, and $f:[a,b]\to\mathbb{R}$, the trapezoidal floor sum of $f$ on $[a,b]$ is given by
\begin{equation*}
	\mathbf{T}(f,a,b) = \frac{1}{2}\lfloor f(a)\rfloor+\sum_{m=a+1}^{b-1}\lfloor f(m)\rfloor + \frac{1}{2}\lfloor f(b)\rfloor.
\end{equation*}

We obtain an improved version of \cite[Lemma 6.4]{FLPS:2023}.

\begin{lemma} \label{s2-6}
For $i,j\in\mathbb{N}$ with $0\leq i<j\leq \mu$, if
\begin{equation*}
    n+\frac{1}{4}>G_{\mu}(i)\geq \cdots \geq G_{\mu}(j-1)\geq n-\frac{3}{4}>G_{\mu}(j)\geq n-\frac{7}{4}
\end{equation*}
for some $n\in\mathbb{Z}$, then
\begin{align*}
	\mathbf{T}\left(G_{\mu}+\frac{3}{4},i,j\right)\geq \int_i^j \! G_{\mu}(t) \,\textrm{d}t+\frac{j-i}{2(2-\Theta)}-\frac{1}{2}
\end{align*}
with $\Theta=\Theta(i, j, \mu)$ defined by \eqref{s2-3}.
\end{lemma}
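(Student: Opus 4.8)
The plan is to evaluate the trapezoidal floor sum exactly from the two-sided bounds in the hypothesis, then to recast the assertion as an upper bound on $\int_i^j G_\mu$, and finally to establish that bound from convexity together with the refined midpoint estimate of Lemma~\ref{s2-2}.

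First I would read off the floor values. Since $G_\mu$ is decreasing, the chain of inequalities in the hypothesis puts $G_\mu(m)$ in $[n-\tfrac34,n+\tfrac14)$ for every integer $m$ with $i\le m\le j-1$, and puts $G_\mu(j)$ in $[n-\tfrac74,n-\tfrac34)$; hence $\lfloor G_\mu(m)+\tfrac34\rfloor=n$ for $i\le m\le j-1$ while $\lfloor G_\mu(j)+\tfrac34\rfloor=n-1$. Substituting into the definition of $\mathbf{T}$ gives the exact identity
\begin{equation*}
\mathbf{T}\!\left(G_\mu+\tfrac34,\,i,\,j\right)=\tfrac12 n+(j-i-1)n+\tfrac12(n-1)=n(j-i)-\tfrac12 .
\end{equation*}
Consequently the claim is equivalent to the purely analytic inequality $\int_i^j\bigl(n-G_\mu(t)\bigr)\,\mathrm{d}t\ge\frac{j-i}{2(2-\Theta)}$, that is, to the bound $\int_i^j G_\mu(t)\,\mathrm{d}t\le n(j-i)-\frac{j-i}{2(2-\Theta)}$.

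To prove this bound I would split $[i,j]$ at its midpoint $\tfrac{i+j}{2}$ and use that a convex function lies below each of its chords, which yields $\int_i^j G_\mu\le\frac{j-i}{4}\bigl(G_\mu(i)+2G_\mu(\tfrac{i+j}{2})+G_\mu(j)\bigr)$. Inserting the midpoint estimate of Lemma~\ref{s2-2}, $G_\mu(\tfrac{i+j}{2})<\frac{1-\Theta}{2-\Theta}G_\mu(i)+\frac{1}{2-\Theta}G_\mu(j)$, turns this into $\int_i^j G_\mu<\frac{j-i}{4}\cdot\frac{(4-3\Theta)G_\mu(i)+(4-\Theta)G_\mu(j)}{2-\Theta}$. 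One has $0<\Theta<1$ (the positive left-hand ratio in Lemma~\ref{s2-1}, taken with $x=i$ and $a=\tfrac{j-i}{2}$, forces its right-hand side $1-\Theta$ to be positive), so the coefficients $4-3\Theta$, $4-\Theta$, $2-\Theta$ are all positive and one may legitimately substitute the endpoint bounds $G_\mu(i)<n+\tfrac14$ and $G_\mu(j)<n-\tfrac34$. The key point is the cancellation $(4-3\Theta)(n+\tfrac14)+(4-\Theta)(n-\tfrac34)=(8-4\Theta)n-2$, which makes the right-hand side collapse to exactly $n(j-i)-\frac{j-i}{2(2-\Theta)}$. Together with the identity for $\mathbf{T}$ above, this gives the lemma (in fact with strict inequality).

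I do not expect a genuine obstacle: once Lemmas~\ref{s2-1} and~\ref{s2-2} are available the argument is elementary. The only steps requiring care are the bookkeeping — checking that the two-sided hypothesis pins the floors down to precisely $n$ and $n-1$ — and the exact arithmetic cancellation displayed above; it is this cancellation, powered by the quantitative strengthening of midpoint convexity in Lemma~\ref{s2-2}, that produces the sharp constant $\frac{1}{2(2-\Theta)}$, whereas the plain convexity bound $G_\mu(\tfrac{i+j}{2})<\tfrac12 G_\mu(i)+\tfrac12 G_\mu(j)$ would recover only the constant $\tfrac14$ of \cite[Lemma 6.4]{FLPS:2023}.
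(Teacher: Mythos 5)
Your proposal is correct and follows essentially the same route as the paper: the trapezoidal floor sum is evaluated exactly as $(j-i)n-\tfrac12$ from the hypothesis, the integral is split at the midpoint and bounded by chords (convexity), and Lemma~\ref{s2-2} together with the endpoint bounds $G_\mu(i)<n+\tfrac14$, $G_\mu(j)<n-\tfrac34$ produces exactly $(j-i)n-\frac{j-i}{2(2-\Theta)}$. Your extra care in checking $0<\Theta<1$ and the floor bookkeeping only makes explicit what the paper leaves implicit.
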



\begin{proof}
Let $k=(i+j)/2$. Then
\begin{align*}
    \int_i^j \! G_{\mu}(t) \,\textrm{d}t&=\int_i^k \! G_{\mu}(t) \,\textrm{d}t+\int_k^j \! G_{\mu}(t) \,\textrm{d}t\\
    &\leq \frac{j-i}{4}\left(G_{\mu}(i)+2G_{\mu}(k)+G_{\mu}(j) \right),
\end{align*}
where we have bounded each integral by the area of a corresponding trapezoid.  By Lemma \ref{s2-2} and the assumption, we obtain
\begin{align*}
 \int_i^j \! G_{\mu}(t) \,\textrm{d}t&\leq   \frac{j-i}{4}\left(\frac{4-3\Theta}{2-\Theta}G_{\mu}(i)+\frac{4-\Theta}{2-\Theta}G_{\mu}(j) \right)\\
 &\leq (j-i)n-\frac{j-i}{2(2-\Theta)}.
\end{align*}
The desired inequality follows, since its left-hand side is exactly equal to $(j-i)n-\frac{1}{2}$.
\end{proof}

The following lemma is already proven in the argument above (with $i=M$, $j=\mu$ and $G_{\mu}(j)=0$). We state it separately for the convenience of later application. It provides an upper bound estimate for the area of a curly triangle.

\begin{lemma} \label{s2-5}
Let $0\leq M<\mu$. Then
    \begin{equation*}
        \int_{M}^{\mu} \! G_{\mu}(t) \,\textrm{d}t\leq \frac{\mu-M}{4}\frac{4-3\Theta}{2-\Theta}G_{\mu}(M)
    \end{equation*}
with $\Theta=\Theta(M, \mu, \mu)$.
\end{lemma}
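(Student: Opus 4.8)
The plan is to read off Lemma~\ref{s2-5} as the special case $i=M$, $j=\mu$ of the computation already carried out in the proof of Lemma~\ref{s2-6}. Two elementary facts are all that is needed. First, $G_\mu$ is convex on $[0,\mu]$: since $G_\mu''(x)=\mu^{-1}g''(x/\mu)$ and $g''(t)=1/(\pi\sqrt{1-t^2})>0$ on $(0,1)$, the graph of $G_\mu$ lies below each of its chords. Second, $G_\mu(\mu)=\mu g(1)=0$, because $g(1)=\frac1\pi(\sqrt{1-1}-\arccos 1)=0$; this is precisely what replaces the term $G_\mu(j)$ in the argument used for Lemma~\ref{s2-6}.

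Setting $k=(M+\mu)/2$ and splitting $\int_M^\mu G_\mu$ at $k$, I would bound each of $\int_M^k G_\mu$ and $\int_k^\mu G_\mu$ by the area of the trapezoid under the corresponding chord, which by convexity is an overestimate. This gives
\[
   \int_M^\mu G_\mu(t)\,\textrm{d}t\le\frac{\mu-M}{4}\bigl(G_\mu(M)+2G_\mu(k)+G_\mu(\mu)\bigr)=\frac{\mu-M}{4}\bigl(G_\mu(M)+2G_\mu(k)\bigr).
\]
Applying Lemma~\ref{s2-2} with $i=M$ and $j=\mu$ yields $G_\mu(k)<\frac{1-\Theta}{2-\Theta}G_\mu(M)+\frac{1}{2-\Theta}G_\mu(\mu)=\frac{1-\Theta}{2-\Theta}G_\mu(M)$ with $\Theta=\Theta(M,\mu,\mu)$; here $2-\Theta>0$ (in fact $\Theta<1$) is automatic, exactly as noted in the proof of Lemma~\ref{s2-1}, since the ratio $R$ there is a quotient of two positive numbers. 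Plugging this in and using the identity $1+\frac{2(1-\Theta)}{2-\Theta}=\frac{4-3\Theta}{2-\Theta}$, then multiplying by the positive constant $(\mu-M)/4$, produces the asserted bound.

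The only point that calls for a remark is the endpoint $M=0$: there Lemma~\ref{s2-2} is invoked through Lemma~\ref{s2-1} with $a=\mu/2$, which sits on the boundary of the stated hypothesis $0<a<\mu/2$. This causes no harm, because the Cauchy mean value theorem and the two-term Taylor expansion in the proof of Lemma~\ref{s2-1} remain valid when $x=0$ and $a=\mu/2$ (one still has $\theta\in(0,\mu/2)$ and $\theta+a<\mu$). Alternatively one can deduce the case $M=0$ by continuity, letting $M\to0^+$ in the inequality already established for $M\in(0,\mu)$. Apart from this minor bookkeeping there is no substantive obstacle, since all the analytic content has already been placed in Lemmas~\ref{s2-1} and~\ref{s2-2}.
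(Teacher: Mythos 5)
Your proof is correct and follows essentially the paper's own route: the paper obtains Lemma~\ref{s2-5} precisely by specializing the trapezoid bound plus Lemma~\ref{s2-2} from the proof of Lemma~\ref{s2-6} to $i=M$, $j=\mu$ with $G_\mu(\mu)=0$, which is exactly your computation. Your extra remark about the endpoint $M=0$ (where Lemma~\ref{s2-1} is invoked with $a=\mu/2$, on the boundary of its hypothesis) addresses a point the paper passes over silently, and both of your fixes --- rechecking the proof of Lemma~\ref{s2-1} or letting $M\to 0^+$ --- are valid.
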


Set
\begin{equation*}
    M_0=\left\lfloor G_{\mu}^{-1}(1/4)\right\rfloor+1, \quad \textrm{for $\mu\geq 2$}.
\end{equation*}
Following \cite[Lemma 4.8]{FLPS:2023}, we have $M_0<\mu$.

\begin{corollary}\label{s2-4}
\begin{equation*}
     \int_{M_0}^{\mu} \! G_{\mu}(t) \,\textrm{d}t\leq \frac{8\pi-3\sqrt{2}}{16(4\pi-\sqrt{2})}\left(\mu-M_0\right).
\end{equation*}
\end{corollary}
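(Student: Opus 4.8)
\section*{Proof proposal for Corollary \ref{s2-4}}

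The plan is to apply Lemma \ref{s2-5} with $M=M_0$ and then bound the two $\mu$-dependent quantities it produces, namely $G_\mu(M_0)$ and $\Theta=\Theta(M_0,\mu,\mu)$, by absolute constants. Since $M_0\ge 1$ and $M_0<\mu$ (the latter being the fact from \cite[Lemma 4.8]{FLPS:2023} quoted just before the corollary), Lemma \ref{s2-5} gives
\[
\int_{M_0}^{\mu} \! G_{\mu}(t)\,\mathrm dt\le \frac{\mu-M_0}{4}\,\frac{4-3\Theta}{2-\Theta}\,G_\mu(M_0),
\]
so it suffices to establish $\frac{4-3\Theta}{2-\Theta}\,G_\mu(M_0)\le \frac{8\pi-3\sqrt2}{4(4\pi-\sqrt2)}$. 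First I would note that $G_\mu(M_0)<\tfrac14$: indeed $M_0=\lfloor G_\mu^{-1}(1/4)\rfloor+1>G_\mu^{-1}(1/4)$, and $G_\mu$ is strictly decreasing (with $G_\mu(\mu)=0$), so $0<G_\mu(M_0)<\tfrac14$.

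The main point is a lower bound for $\Theta$. Writing $s:=M_0/\mu\in(0,1)$ and using the elementary formulas $g'(x)=-\pi^{-1}\arccos x$ and $g''(x)=\pi^{-1}(1-x^2)^{-1/2}$, one has
\[
\Theta=\frac{1-s}{2}\,\frac{g''(s)}{|g'(s)|}=\frac{1-s}{2\sqrt{1-s^2}\,\arccos s}.
\]
Here I would invoke the elementary inequality $\arccos s\le \tfrac{\pi}{2}\sqrt{1-s}$ for $s\in[0,1]$ (which follows, for instance, from the observation that $\tfrac{\pi^2}{4}(1-s)-(\arccos s)^2$ vanishes at $s=0$ and $s=1$ and has a derivative $-\tfrac{\pi^2}{4}+\tfrac{2\arccos s}{\sqrt{1-s^2}}$ that is monotone in $s$, hence changes sign exactly once), together with the trivial bound $\sqrt{1-s^2}\le\sqrt{2(1-s)}$. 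Combining them yields $\sqrt{1-s^2}\,\arccos s\le \tfrac{\pi}{\sqrt2}(1-s)$, whence $\Theta\ge \tfrac{1}{\pi\sqrt2}=\tfrac{\sqrt2}{2\pi}$, with strict inequality since $s<1$. A similar use of $\arccos s\ge\sqrt{1-s^2}$ shows $\Theta<\tfrac12$, so $2-\Theta>0$.

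Finally, the map $\Theta\mapsto \frac{4-3\Theta}{2-\Theta}$ has derivative $-2/(2-\Theta)^2<0$ on $\{\Theta<2\}$ and is therefore strictly decreasing there; combined with $\tfrac{\sqrt2}{2\pi}<\Theta<\tfrac12$ this gives
\[
\frac{4-3\Theta}{2-\Theta}<\frac{4-\tfrac{3\sqrt2}{2\pi}}{2-\tfrac{\sqrt2}{2\pi}}=\frac{8\pi-3\sqrt2}{4\pi-\sqrt2}.
\]
Multiplying this by $G_\mu(M_0)<\tfrac14$ (both factors are positive) and substituting into the estimate from Lemma \ref{s2-5} yields
\[
\int_{M_0}^{\mu} \! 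G_{\mu}(t)\,\mathrm dt<\frac{\mu-M_0}{4}\cdot\frac{8\pi-3\sqrt2}{4\pi-\sqrt2}\cdot\frac14=\frac{8\pi-3\sqrt2}{16(4\pi-\sqrt2)}\,(\mu-M_0),
\]
which is the claim. The only step that is not purely mechanical is the trigonometric inequality $\arccos s\le\frac{\pi}{2}\sqrt{1-s}$ and keeping the monotonicity of $\Theta\mapsto(4-3\Theta)/(2-\Theta)$ oriented correctly (so that a \emph{lower} bound on $\Theta$ feeds into an \emph{upper} bound on the ratio); everything else is routine bookkeeping. One could instead prove the sharper bound $\Theta>\tfrac14$ (via $2\tan(\phi/2)\ge\phi$ after the substitution $s=\cos\phi$), but the weaker constant $\tfrac{\sqrt2}{2\pi}$ already yields the stated closed form, which is all that is needed later.
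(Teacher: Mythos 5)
Your proposal is correct and follows essentially the same route as the paper: apply Lemma \ref{s2-5} with $M=M_0$, derive $\Theta(M_0,\mu,\mu)\ge \frac{1}{\sqrt{2}\pi}$ from the inequality $\arccos s\le\frac{\pi}{2}\sqrt{1-s}$, and conclude via the monotonicity of $\Theta\mapsto\frac{4-3\Theta}{2-\Theta}$ together with $G_\mu(M_0)\le\frac14$. You merely make explicit some steps the paper leaves implicit (the bound $G_\mu(M_0)<\frac14$, the monotonicity of the ratio, and $\Theta<\frac12$), and your use of $\sqrt{1-s^2}\le\sqrt{2(1-s)}$ is algebraically equivalent to the paper's $(1+s)^{-1/2}\ge 1/\sqrt2$.
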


\begin{remark}
    Notice that the constant factor on the right-hand side is roughly  $0.117$. This improves upon the factor of $1/8$ in the proof of \cite[Theorem 6.1]{FLPS:2023}.
\end{remark}

\begin{proof}[Proof of Corollary \ref{s2-4}]
By using the definition of $g$, we have
\begin{equation*}
    \Theta(M_0, \mu, \mu)=\frac{1}{2}\left(1-\frac{M_0}{\mu} \right)\frac{(1-(M_0/\mu)^2)^{-1/2}}{\arccos(M_0/\mu)}.
\end{equation*}
Notice that
\begin{equation*}
    \arccos x\leq \frac{\pi}{2} (1-x)^{1/2}, \quad x\in [0,1].
\end{equation*}
Hence
\begin{equation*}
    \Theta(M_0, \mu, \mu)\geq \frac{1}{\pi}\left(1+\frac{M_0}{\mu} \right)^{-1/2}\geq \frac{1}{\sqrt{2}\pi}.
\end{equation*}
With this lower bound, the result then follows directly from Lemma \ref{s2-5}.
\end{proof}

The argument of \cite[Theorem 6.1]{FLPS:2023}, combined with our results above, allows us to strengthen their theorem as follows.

\begin{theorem} \label{s2-7}
Let $\mu\geq 2$. Then
\begin{equation*}  
\sum_{m=0}^{\lfloor \mu\rfloor} \left\lfloor G_{\mu}(m)+\frac{3}{4} \right\rfloor \geq \frac{\mu^2}{8}+\frac{1}{4}\left(\!\frac{24\pi-7\sqrt{2}}{4(4\pi-\sqrt{2})}+\frac{1}{2\pi\mu-1}\!\right)M_0-\frac{8\pi-3\sqrt{2}}{16(4\pi-\sqrt{2})}\mu.
\end{equation*}
\end{theorem}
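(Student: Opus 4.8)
\textbf{Proof proposal for Theorem \ref{s2-7}.}

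The plan is to follow the skeleton of the proof of \cite[Theorem 6.1]{FLPS:2023}, but to replace every use of convexity or the crude ``factor $1/8$'' estimate by the sharper inequalities established above (Lemma \ref{s2-6}, Lemma \ref{s2-5}, and Corollary \ref{s2-4}). First I would split the sum at $M_0$: write $\sum_{m=0}^{\lfloor\mu\rfloor}\lfloor G_\mu(m)+\frac34\rfloor = \sum_{m=0}^{M_0-1}\lfloor G_\mu(m)+\frac34\rfloor + \sum_{m=M_0}^{\lfloor\mu\rfloor}\lfloor G_\mu(m)+\frac34\rfloor$. On $[0,M_0-1]$ the values $G_\mu(m)$ are at least $1/4$, so the idea (as in \cite{FLPS:2023}) is to partition this range into consecutive blocks $[i,j]$ on each of which the hypothesis of Lemma \ref{s2-6} holds, i.e. $G_\mu$ drops through exactly one integer window of the prescribed shape $[n-\frac74, n+\frac14)$; applying Lemma \ref{s2-6} block by block and telescoping the trapezoidal floor sums gives a lower bound of the form $\int_0^{M_0} G_\mu(t)\,dt + \sum_{\text{blocks}}\frac{j-i}{2(2-\Theta)} - \frac12(\#\text{blocks})$, with the usual care that the trapezoidal endpoint weights at interior block boundaries recombine into the honest floor sum $\sum_{m=1}^{M_0-1}\lfloor G_\mu(m)+\frac34\rfloor$ plus half the endpoint terms. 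The tail $\sum_{m=M_0}^{\lfloor\mu\rfloor}\lfloor G_\mu(m)+\frac34\rfloor$ is handled exactly as in \cite{FLPS:2023}: since $0\le G_\mu(m)<1/4$ there, each floor is $0$, so this tail contributes nothing and we only need that it is $\ge \int_{M_0}^{\mu}G_\mu(t)\,dt - (\text{small error})$, which follows from Corollary \ref{s2-4}.

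Next I would reassemble. The integral pieces combine into $\int_0^\mu G_\mu(t)\,dt = \mu^2 \int_0^1 g(t)\,dt = \mu^2/8$ (this is the standard normalization $\int_0^1 g = 1/8$, which one checks directly from \eqref{s2-9}). The correction terms must be estimated from below: each block of width $j-i\ge 1$ contributes $\frac{j-i}{2(2-\Theta)}$, and the number of blocks on $[0,M_0]$ is comparable to $M_0$ times the range of $G_\mu$, so after summing, the total gain over the blocks is bounded below by a constant times $M_0$. Here I would use that $\Theta = \Theta(i,j,\mu)$ is bounded below uniformly — the key estimate $-g''(x)/|g'(x)|\ge 2/\pi$ from Lemma \ref{s2-1} gives $\Theta\ge \frac{j-i}{\pi\mu}\ge \frac{1}{\pi\mu}$, hence $\frac{1}{2(2-\Theta)}\ge \frac{1}{2(2-1/(\pi\mu))} = \frac{\pi\mu}{2(2\pi\mu-1)} = \frac14\cdot\frac{2\pi\mu}{2\pi\mu-1} = \frac14\left(1+\frac{1}{2\pi\mu-1}\right)$; summing over the (roughly $M_0$ many, total width $M_0$) blocks yields the $\frac14\cdot\frac{1}{2\pi\mu-1}M_0$ term, while the ``$-\frac12$ per block'' cost is absorbed by noting it is $-\frac12$ times the number of integer levels crossed, which is $\le \frac14 M_0$-ish after accounting for the window width $2$; combined with the bookkeeping of the half-endpoint at $m=0$ (where $G_\mu(0)=\mu/\pi$ contributes $\frac12\lfloor\mu/\pi+\frac34\rfloor$) and at $m=M_0$, and with the Corollary \ref{s2-4} bound $\int_{M_0}^\mu G_\mu \le \frac{8\pi-3\sqrt2}{16(4\pi-\sqrt2)}(\mu-M_0)$ being subtracted off and added back, the constants $\frac{24\pi-7\sqrt2}{4(4\pi-\sqrt2)}$ and $-\frac{8\pi-3\sqrt2}{16(4\pi-\sqrt2)}$ emerge. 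I would carry out the arithmetic of combining $\frac{\mu^2}{8}$, the block gains $\sim \frac14(1+\frac{1}{2\pi\mu-1})M_0$, the level-counting cost, and the $\pm$ Corollary \ref{s2-4} terms to match the stated right-hand side exactly.

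The main obstacle I expect is the combinatorial bookkeeping in the first range $[0,M_0]$: one must verify that $[0,M_0]$ genuinely decomposes into blocks each satisfying the precise hypothesis of Lemma \ref{s2-6} (the window $[n-\frac74,\,n+\frac14)$ of length $2$, aligned so that $G_\mu(i)$ sits in the top quarter and $G_\mu(j)$ in the bottom quarter), that consecutive blocks overlap correctly at their shared endpoint so the trapezoidal weights $\frac12+\frac12=1$ rebuild the true floor sum, and — crucially — that the number of such blocks, together with the $-\frac12$ penalty each, does not eat up the $\Theta$-gain. This is exactly the delicate part of \cite[proof of Theorem 6.1]{FLPS:2023}, and the improvement here is quantitative rather than structural: the $\Theta$ from Lemma \ref{s2-2}/\ref{s2-6} is strictly larger than what pure convexity gives, so the per-block gain $\frac{1}{2(2-\Theta)}$ strictly exceeds $\frac14$, and Corollary \ref{s2-4} replaces the factor $1/8$ by $\approx 0.117$; I would need to track both improvements simultaneously through the telescoping to land on the sharpened constants. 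A secondary technical point is handling the boundary blocks (the one containing $m=0$, where $G_\mu$ need not start at the top of a window, and the one adjacent to $M_0$, where Corollary \ref{s2-4} takes over from Lemma \ref{s2-6}), but these contribute only $O(1)$ and are absorbed into the $-\frac{8\pi-3\sqrt2}{16(4\pi-\sqrt2)}\mu$ term.
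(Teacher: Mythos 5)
Your proposal follows essentially the same route as the paper: decompose $[0,M_0]$ into the level-set blocks $[M_{n+1},M_n]$ with $M_k=\lfloor G_\mu^{-1}(k+\frac14)\rfloor+1$, apply Lemma \ref{s2-6} on each block with the uniform bound $\Theta\geq\frac{1}{\pi\mu}$ so that each unit of width gains $\frac14\bigl(1+\frac{1}{2\pi\mu-1}\bigr)$, handle the tail beyond $M_0$ via Corollary \ref{s2-4}, and use $\int_0^\mu G_\mu=\mu^2/8$. The only place your sketch is looser than the paper is the bookkeeping of the $-\frac12$-per-block penalty: there are exactly $N=\lfloor\frac{\mu}{\pi}+\frac34\rfloor$ blocks, so the total cost $-\frac{N}{2}$ is cancelled \emph{exactly} by the half-weight $\frac12\lfloor G_\mu(0)+\frac34\rfloor=\frac{N}{2}$ at $m=0$ (no ``$\frac14 M_0$-ish'' absorption is needed), after which the stated constants come out of the arithmetic precisely as you describe.
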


\begin{proof}
Set
\begin{equation*}
N=\left\lfloor \frac{\mu}{\pi}+\frac{3}{4} \right\rfloor,
\end{equation*}
\begin{equation*}
M_k=\left\lfloor G_{\mu}^{-1}\left(k+\frac{1}{4}\right) \right\rfloor+1,\quad 1\leq k\leq N-1,
\end{equation*}
and $M_N=0$. Therefore, by Lemma \ref{s2-6},
\begin{align*}
 &\frac{1}{2} \left\lfloor G_{\mu}(0)+\frac{3}{4} \right\rfloor+   \sum_{m=1}^{\lfloor \mu\rfloor} \left\lfloor G_{\mu}(m)+\frac{3}{4} \right\rfloor\\
 =&\sum_{n=0}^{N-1}\mathbf{T}\left(G_{\mu}+\frac{3}{4},M_{n+1},M_{n}\right)\\
\geq& \sum_{n=0}^{N-1} \left( \int_{M_{n+1}}^{M_n} \! G_{\mu}(t) \,\textrm{d}t+\frac{M_n-M_{n+1}}{2(2-\beta_n)}-\frac{1}{2} \right)\\
\geq &\int_0^{M_0} \! G_{\mu}(t) \,\textrm{d}t+\frac{1}{4}\left(1+\frac{1}{2\pi\mu-1} \right)M_0 -\frac{N}{2},
\end{align*}  
where we have used
\begin{equation*}
\beta_n:=\Theta(M_{n+1}, M_n, \mu)=\frac{M_n-M_{n+1}}{2\mu}\frac{g''(M_{n+1}/\mu)}{|g'(M_{n+1}/\mu)|}\geq \frac{1}{2\mu}\frac{g''(0)}{|g'(0)|}=\frac{1}{\pi \mu}.
\end{equation*}
Combining Corollary \ref{s2-4} and evaluating the integral of $G_{\mu}$ over the interval $[0, \mu]$ then yields the desired result.
\end{proof}

Recall that \cite[Theorem 2.3]{FLPS:2023} gives that, for the unit disk $\mathcal{B}_1^2$ and all $\mu\geq 0$,
\begin{equation*}
\mathscr{N}^{\mathtt{N}}_{\mathcal{B}_1^2}(\mu)\geq \mathscr{P}^{\mathtt{N}}_2(\mu),
\end{equation*}
where the lattice point counting function $\mathscr{P}^{\mathtt{N}}_2(\mu)$ is given by
\begin{equation*}
    \mathscr{P}^{\mathtt{N}}_2(\mu)=-\left\lfloor \frac{\mu}{\pi}+\frac{3}{4} \right\rfloor+ 2\sum_{m=0}^{\lfloor \mu\rfloor} \left\lfloor G_{\mu}(m)+\frac{3}{4} \right\rfloor.
\end{equation*}
Therefore, to establish the Neumann P\'{o}lya's conjecture for the unit disk when $\mu \geq 11.89$, it is sufficient to prove that $\mathscr{P}^{\mathtt{N}}_2(\mu) \geq \mu^2/4$ for all $\mu$ in this range. By Theorem \ref{s2-7},
\begin{equation*}
\mathscr{P}^{\mathtt{N}}_2(\mu)\geq \mu^2/4 + \mathcal{R}(\mu)
\end{equation*}
with
\begin{equation*}
\mathcal{R}(\mu):=-\left\lfloor \frac{\mu}{\pi}+\frac{3}{4} \right\rfloor+\frac{1}{2}\left(\!\frac{24\pi-7\sqrt{2}}{4(4\pi-\sqrt{2})}+\frac{1}{2\pi\mu-1}\!\right)M_0-\frac{8\pi-3\sqrt{2}}{8(4\pi-\sqrt{2})}\mu.
\end{equation*}
Note that, by \cite[Lemma 4.8]{FLPS:2023},
\begin{equation}\label{s2-10}
  \mathcal{R}(\mu)>\frac{1}{2}\left(\!\frac{24\pi-7\sqrt{2}}{4(4\pi-\sqrt{2})}+\frac{1}{2\pi\mu}\!\right)\mu\cos\sigma-\frac{8\pi-3\sqrt{2}}{8(4\pi-\sqrt{2})}\mu-\left( \frac{\mu}{\pi}+\frac{3}{4} \right),
\end{equation}
whenever
\begin{equation}\label{s2-8}
    \mu\geq \frac{\pi}{4(\sin\sigma-\sigma\cos\sigma)}.
\end{equation}
Choosing $\sigma=0.59$, a calculation shows that for $\mu\geq 11.89$, both \eqref{s2-8} and $\mathcal{R}(\mu)>0$ are satisfied. This finishes the proof of Theorem \ref{thm999}. \qed


\subsection{Improvement of the Neumann P\'{o}lya's conjecture for disks}

Note that \eqref{s2-10} gives that
\begin{equation*}
    \mathcal{R}(\mu)>\left(\!\frac{24\pi-7\sqrt{2}}{8(4\pi-\sqrt{2})}\cos\sigma-\frac{8\pi-3\sqrt{2}}{8(4\pi-\sqrt{2})}-\frac{1}{\pi}\!\right)\mu-\frac{3}{4}+\frac{\cos\sigma}{4\pi}.
\end{equation*}
If we take $\sigma=0.588$, then, for $\mu\geq 12$, a calculation shows that \eqref{s2-8} holds and
\begin{equation*}
   \mathcal{R}(\mu)>0.0014\mu.
\end{equation*}
This proves Theorem \ref{thm333}. \qed


\subsection{Improvement of the Dirichlet P\'{o}lya's conjecture for disks}

We prove Theorem \ref{thm777} in this subsection. We first establish a variant of \cite[Theorem 3.4]{FLPS:2025}.

\begin{proposition} \label{s4-24}
Let $a,b\in\mathbb{Z}$ with $a<b$, and suppose that $\mathcal{G}$ is a decreasing and convex function on $[a,b]$ with Lipschitz constant $1/2$, and satisfies $\mathcal{G}(b)=0$. Suppose further that there exist $\gamma_1$ and $\gamma_2$ in $[a,b]$ with $\gamma_2<\gamma_1$ such that
\begin{equation}\label{s2-11}
    \left|\mathcal{G}(t_1)-\mathcal{G}(t_2)\right|<\frac{1}{3}\left|t_1-t_2\right| \textrm{ for all $t_1\neq t_2\in [\gamma_1, b]$},
\end{equation}
and
\begin{equation}\label{s2-12}
    \left|\mathcal{G}(t_1)-\mathcal{G}(t_2)\right|<\frac{3}{8}\left|t_1-t_2\right| \textrm{ for all $t_1\neq t_2\in [\gamma_2, b]$}.
\end{equation}
Then
\begin{equation}\label{s4-26}
\mathbf{T}\left(\mathcal{G}+\frac{1}{4},a,b\right)\le \int_a^b \! \mathcal{G}(t)\,\textrm{d} t - \frac{1}{16}\lfloor \mathcal{G}(\gamma_1)\rfloor- \frac{3}{16}\lfloor \mathcal{G}(\gamma_2)\rfloor.
\end{equation}
\end{proposition}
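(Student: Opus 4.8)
The plan is to adapt the proof of \cite[Theorem 3.4]{FLPS:2025}, which compares the trapezoidal floor sum of $\mathcal{G}+\frac14$ to the integral of $\mathcal{G}$, but to keep track of the two extra gains coming from the slope hypotheses \eqref{s2-11} and \eqref{s2-12}. The starting point is the elementary per-step comparison: on each unit interval $[m,m+1]\subset[a,b]$, convexity of $\mathcal{G}$ means the trapezoidal estimate $\frac12(\mathcal{G}(m)+\mathcal{G}(m+1))$ overestimates $\int_m^{m+1}\mathcal{G}$, so summing the trapezoid rule gives $\mathbf{T}(\mathcal{G},a,b)\ge\int_a^b\mathcal{G}$. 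One then has to convert $\mathbf{T}(\mathcal{G}+\frac14,a,b)$ to $\mathbf{T}(\mathcal{G},a,b)$ via the floor function: the point is that $\lfloor\mathcal{G}(m)+\frac14\rfloor\le\mathcal{G}(m)+\frac14-\{\mathcal{G}(m)+\frac14\}$, and the fractional parts have to be controlled. Because $\mathcal{G}$ is $\frac12$-Lipschitz, consecutive values $\mathcal{G}(m),\mathcal{G}(m+1)$ differ by less than $\frac12$; on the subintervals $[\gamma_1,b]$ and $[\gamma_2,b]$ they differ by less than $\frac13$ and $\frac38$ respectively. This is exactly the mechanism by which, over a run of consecutive integers where $\mathcal{G}$ drops slowly, the floored values cannot all land just above a half-integer, forcing a deficit that accumulates.

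The key steps, in order, are: (1) reduce to showing $\mathbf{T}(\mathcal{G}+\frac14,a,b)\le\mathbf{T}(\mathcal{G},a,b)-\frac{1}{16}\lfloor\mathcal{G}(\gamma_1)\rfloor-\frac{3}{16}\lfloor\mathcal{G}(\gamma_2)\rfloor$, after which the convexity bound $\mathbf{T}(\mathcal{G},a,b)\le\int_a^b\mathcal{G}$ finishes the proof; wait — one must check the direction: the trapezoid rule overestimates for convex functions, so indeed $\int_a^b\mathcal{G}\le\mathbf{T}(\mathcal{G},a,b)$, and we want an upper bound for $\mathbf{T}(\mathcal{G}+\frac14,\cdot)$ by $\int_a^b\mathcal{G}$ minus positive terms, so step (1) must produce a deficit strictly larger than the convexity excess $\mathbf{T}(\mathcal{G},a,b)-\int_a^b\mathcal{G}$. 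Thus step (1) should be sharpened: one partitions $[a,b]$ at the integer points where $\lfloor\mathcal{G}\rfloor$ jumps, and on each ``plateau'' of length $\ell$ (a maximal run of integers $m$ with $\lfloor\mathcal{G}(m)+\frac14\rfloor$ constant, or more precisely where $\mathcal{G}$ stays within a band of width $\frac12$) one shows that the floor-sum loses at least a fixed fraction per unit length — $\frac14$ per step generically, but the slope hypotheses upgrade this to a gain of $\frac{1}{16}$ accumulated across the band containing $\gamma_1$ down to $b$, and $\frac{3}{16}$ across the band from $\gamma_2$ to $b$. Concretely, on $[\gamma_1,b]$ the graph of $\mathcal{G}$ has slope $>-\frac13$ in absolute value, so the number of integer lattice lines $\{y=n\}$ it crosses is at most $\frac13(\gamma_1-b)$... and comparing $\sum\lfloor\mathcal{G}(m)+\frac14\rfloor$ to $\int\mathcal{G}$ on that range, the floor-vs-continuous deficit minus the trapezoid excess is at least $\frac{1}{16}\lfloor\mathcal{G}(\gamma_1)\rfloor$; the $\frac38$-slope hypothesis on the larger interval $[\gamma_2,b]$ similarly yields the $\frac{3}{16}\lfloor\mathcal{G}(\gamma_2)\rfloor$ term, and since $[\gamma_1,b]\subset[\gamma_2,b]$ the two gains add with the stated coefficients. (2) Assemble: the contribution of $[a,\gamma_2]$ is handled by the crude $\mathbf{T}(\mathcal{G}+\frac14,\cdot)\le\int\mathcal{G}$ type bound with the $\frac14$ absorbed by the $\frac12$-Lipschitz property exactly as in \cite{FLPS:2025}, and the two refined intervals contribute the two negative terms. (3) Care at the endpoints $\gamma_1,\gamma_2$ not being integers: one replaces them by $\lceil\gamma_i\rceil$ or $\lfloor\gamma_i\rfloor$ as appropriate, noting monotonicity of $\mathcal{G}$ means $\lfloor\mathcal{G}(\gamma_i)\rfloor\le\lfloor\mathcal{G}(\lfloor\gamma_i\rfloor)\rfloor$, so the stated inequality is implied by the version at integer endpoints.

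I expect the main obstacle to be step (1) — making the ``slow descent forces a floor deficit'' heuristic into a clean inequality with the exact constants $\frac{1}{16}$ and $\frac{3}{16}$, and in particular ensuring the two gains from the nested intervals $[\gamma_2,b]\supset[\gamma_1,b]$ can be extracted simultaneously rather than one cannibalizing the other. The natural device is to introduce the intermediate threshold value $\cos(\frac38\pi)$ implicitly: the slope bound $\frac38$ corresponds to the region where $\mathcal{G}'$ stays above $-\frac38$, which for the actual application $\mathcal{G}=G_\mu$ happens past the point $x/\mu=\cos(\frac38\pi)$, matching the structure of Theorems \ref{thm777} and \ref{thm888}. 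I would organize the accounting by summing the ``local deficit'' $d_m:=\mathcal{G}(m)+\frac14-\lfloor\mathcal{G}(m)+\frac14\rfloor$ minus the half-weighted endpoint corrections, decompose $[a,b]=[a,\gamma_2]\cup[\gamma_2,\gamma_1]\cup[\gamma_1,b]$, and on each of the latter two pieces invoke a lemma (a direct analogue of \cite[Lemma 3.2 or 3.3]{FLPS:2025}, with $\frac13$ and $\frac38$ in place of their constant) that quantifies the minimal total deficit of a $c$-Lipschitz decreasing convex function over an integer interval in terms of $\lfloor\mathcal{G}(\text{left endpoint})\rfloor$; then add.
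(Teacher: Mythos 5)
Your overall strategy is the one the paper follows: adapt the band-deficit machinery of \cite{FLPS:2025}, harvest a fixed gain on every integer level of $\mathcal{G}$ lying in the region where a slope bound holds, and add the gains from the nested regions $[\gamma_1,b]\subset[\gamma_2,b]$ by splitting the stronger gain as $\frac14=\frac{1}{16}+\frac{3}{16}$; that accounting is exactly how the coefficients in \eqref{s4-26} arise. However, the two places where the actual content sits are left as black boxes, and one of them, as you finally formulate it, is false. The quantitative heart is the per-band lemma: the paper states a variant of \cite[Lemma 3.5]{FLPS:2025} saying that if $\mathcal{G}$ is $c$-Lipschitz on $[A+1,B+1]$ with $c\in[1/3,3/8]$ and $[A,B]$ spans one integer level, then $\mathbf{T}(\mathcal{G}+\frac14,A,B)\le\int_A^B\mathcal{G}\,\textrm{d}t-(\frac34-\frac32c)$, which gives gain $\frac14$ at $c=1/3$ and $\frac{3}{16}$ at $c=3/8$. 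You posit an ``analogue with $\frac13$ and $\frac38$'' but never identify how the gain depends on $c$; the constants $\frac{1}{16}$ and $\frac{3}{16}$ are thus reverse-engineered from the target rather than derived (the paper also defers the proof to \cite{FLPS:2025}, but it states the lemma and the gain $\frac34-\frac32c$ precisely).

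Second, your closing device---cut $[a,b]$ at integer roundings of $\gamma_2,\gamma_1$ and invoke on each piece a lemma bounding the deficit by a constant times $\lfloor\mathcal{G}(\text{left endpoint})\rfloor$---fails on the middle piece $[\gamma_2,\gamma_1]$: there the achievable deficit is proportional to the number of integer levels crossed inside that piece, roughly $\lfloor\mathcal{G}(\gamma_2)\rfloor-\lfloor\mathcal{G}(\gamma_1)\rfloor$, not to $\lfloor\mathcal{G}(\gamma_2)\rfloor$, which can be far larger; and cutting at $\lceil\gamma_i\rceil$ (needed to keep the slope hypothesis) can lose a level in the count. The paper sidesteps both issues by decomposing according to levels rather than the domain: with $q_n=\max\{m\in\mathbb{Z}:\mathcal{G}(m)\ge n\}$ it writes $\mathbf{T}(\mathcal{G}+\frac14,a,b)$ as a sum over the bands $[q_{n+1},q_n]$, notes that at least $\lfloor\mathcal{G}(\gamma_1)\rfloor$ bands have $q_{n+1}+1\ge\gamma_1$ (apply the lemma with $c=1/3$) and at least $\lfloor\mathcal{G}(\gamma_2)\rfloor$ have $q_{n+1}+1\ge\gamma_2$ (apply $c=3/8$ to the extra ones), uses \cite[Theorem 3.3]{FLPS:2025} on the remaining bands, and then rearranges $\frac14\lfloor\mathcal{G}(\gamma_1)\rfloor+\frac{3}{16}(\lfloor\mathcal{G}(\gamma_2)\rfloor-\lfloor\mathcal{G}(\gamma_1)\rfloor)$ into \eqref{s4-26}. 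So your proposal has the right mechanism and constants, but to become a proof it needs the explicit $c$-dependent per-band gain and the band-based (rather than domain-based) bookkeeping.
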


\begin{proof}
Following the proof strategy of \cite[Theorem 3.4]{FLPS:2025}, we first state a variant result of \cite[Lemma 3.5]{FLPS:2025}. Let $A,B\in\mathbb{Z}$ with $a\leq A<B\leq b-1$. Suppose, instead of \eqref{s2-11} and \eqref{s2-12},  the function $\mathcal{G}$ satisfies that, for some constant $c\in [1/3, 3/8]$,
\begin{equation*}
    \left|\mathcal{G}(t_1)-\mathcal{G}(t_2)\right|<c\left|t_1-t_2\right| \textrm{ for all $t_1\neq t_2\in [A+1, B+1]$}.
\end{equation*}
Suppose further that for some $n\in\mathbb Z$,
\begin{equation*}
\mathcal{G}(A)\ge n+1\ge \mathcal{G}(A+1)\ge \mathcal{G}(B)\ge n\ge \mathcal{G}(B+1).
\end{equation*}
Then
\begin{equation} \label{s4-25}
\mathbf{T}\left(\mathcal{G}+\frac{1}{4}, A, B\right)\le\int_A^B \mathcal{G}(t)\, \textrm{d} t - \left(\frac{3}{4}-\frac{3}{2}c\right).
\end{equation}
The proof of this result is essentially  the same as that of \cite[Lemma 3.5]{FLPS:2025}, requiring only minor modifications in Case 2.

Let $N=\lfloor \mathcal{G}(a)\rfloor$, and define
\begin{equation*}
 q_n=\max\{m\in\mathbb Z : \mathcal{G}(m)\ge n\},\quad \textrm{for $0\leq n\leq N$.}
\end{equation*}
Then  we have the decomposition
\begin{equation*}
\mathbf{T}\left(\mathcal{G}+\frac{1}{4},a,b\right)=\mathbf{T}\left(\mathcal{G}+\frac{1}{4},a,q_N\right) + \sum_{n=0}^{N-1}\mathbf{T}\left(\mathcal{G}+\frac{1}{4},q_{n+1},q_n\right).
\end{equation*}
Note that there are at least $\lfloor \mathcal{G}(\gamma_2)\rfloor$ values of $n$ for which $q_{n+1}+1\ge \gamma_2$, and at least $\lfloor \mathcal{G}(\gamma_1)\rfloor$ values of $n$ for which $q_{n+1}+1\ge \gamma_1$. We now apply the bound \eqref{s4-25} with $A=q_{n+1}$ and $B=q_n$ as follows: with $c=1/3$ for $\lfloor \mathcal{G}(\gamma_1)\rfloor$ values of $n$ satisfying $q_{n+1}+1\ge \gamma_1$; with $c=3/8$ for $\lfloor \mathcal{G}(\gamma_2)\rfloor-\lfloor \mathcal{G}(\gamma_1)\rfloor$ values of $n$ satisfying $q_{n+1}+1\ge \gamma_2$. For the remaining terms, we apply \cite[Theorem 3.3]{FLPS:2025}. This yields
\begin{equation*}
\mathbf{T}\left(\mathcal{G}+\frac{1}{4},a,b\right)\le \int_a^b \! \mathcal{G}(t)\,\textrm{d} t - \frac{1}{4}\lfloor \mathcal{G}(\gamma_1)\rfloor- \frac{3}{16}\left(\lfloor \mathcal{G}(\gamma_2)\rfloor-\lfloor \mathcal{G}(\gamma_1)\rfloor \right),
\end{equation*}
which  directly implies \eqref{s4-26}.
\end{proof}

We are now ready to prove Theorem \ref{thm777}. By \cite[Theorem 2.3]{FLPS:2023},
\begin{equation*}
    \mathscr{N}^{\mathtt{D}}_{\mathcal{B}_1^2}(\mu)\leq 2\mathbf{T}\left(G_\mu+\frac{1}{4},0,\lfloor \mu\rfloor+1\right).
\end{equation*}
Applying Proposition \ref{s4-24} with  $\mathcal{G}=G_\mu$, $a=0$, $b=\lfloor \mu\rfloor+1$, $\gamma_1=\mu\cos(\frac{\pi}{3})$ and $\gamma_2=\mu \cos(\frac{3\pi}{8})$ yields the desired bound. \qed


\subsection{Improvement of the Dirichlet P\'{o}lya's conjecture for balls}

We prove Theorem \ref{thm888} in this subsection. Its proof combines the argument from \cite[Section 7]{FLPS:2023} with the results from the previous subsection and \cite[Section 3]{FLPS:2025}.

We know that, by \cite[Theorems 2.3 and 7.1]{FLPS:2023}, for all $\mu\geq 0$ and $d\geq 3$,
\begin{equation}\label{s2-4-1}
\mathscr{N}^{\mathtt{D}}_{\mathcal{B}_1^d}(\mu)\leq\mathscr{P}^{\mathtt{D}}_d(\mu)=\sum_{n=0}^{\left\lfloor\mu-\frac{d}{2}+1 \right\rfloor}\binom{n+d-3}{d-3} \widetilde{\mathscr{P}}^{\mathtt{D}}_{n+\frac{d}{2}-1}(\mu),
\end{equation}
where, for $r \in[0, \mu]$,
\begin{equation*}
  \widetilde{\mathscr{P}}_r^{\mathtt{D}}(\mu):=2 \mathbf{T}\left(\widetilde{G}_{\mu, r}+\frac{1}{4},0,\lfloor\mu-r\rfloor+1\right)
\end{equation*}
and
\begin{equation*}
\widetilde{G}_{\mu, r}(t):=\left\{ \begin{array}{ll}
G_\mu(t+r),  & t \in [0, \mu-r],\\
0,        & t \in [\mu-r, \lfloor\mu-r\rfloor+1].
\end{array}\right.
\end{equation*}
We decompose the sum in \eqref{s2-4-1} as follows.
\begin{equation}\label{s2-4-2}
\sum_{n=0}^{\left\lfloor\mu-\frac{d}{2}+1 \right\rfloor}=\sum_{n=0}^{\left\lfloor\cos(\frac{3}{8}\pi)\mu-\frac{d}{2}+1 \right\rfloor}+\sum_{n=\left\lfloor\cos(\frac{3}{8}\pi)\mu-\frac{d}{2}+1 \right\rfloor+1}^{\left\lfloor\frac{\mu}{2}-\frac{d}{2}+1 \right\rfloor}+\sum_{n=\left\lfloor\frac{\mu}{2}-\frac{d}{2}+1 \right\rfloor+1}^{\left\lfloor\mu-\frac{d}{2}+1 \right\rfloor}.
\end{equation}
If each of the three summations on the right-hand side contains at least one term, we apply Proposition \ref{s4-24} to the first, \cite[Theorem 3.4]{FLPS:2025} to the second, and \cite[Theorem 3.3]{FLPS:2025} to the third. Collecting the resulting upper bounds, we obtain the following estimate
\begin{align*}
\mathscr{N}^{\mathtt{D}}_{\mathcal{B}_1^d}(\mu)\leq &2\sum_{n=0}^{\left\lfloor\mu-\frac{d}{2}+1 \right\rfloor}\binom{n+d-3}{d-3} \int_{n+\frac{d}{2}-1}^{\mu} \!\! G_{\mu}(t)\,\textrm{d} t\\
  &-\left(\frac{1}{8}\lfloor\omega_0 \mu \rfloor+\frac{3}{8}\lfloor\omega_1 \mu \rfloor \right)\sum_{n=0}^{\left\lfloor\cos(\frac{3}{8}\pi)\mu-\frac{d}{2}+1 \right\rfloor}\binom{n+d-3}{d-3}\\
  &-\frac{1}{2}\lfloor\omega_0 \mu \rfloor\sum_{n=\left\lfloor\cos(\frac{3}{8}\pi)\mu-\frac{d}{2}+1 \right\rfloor+1}^{\left\lfloor\frac{\mu}{2}-\frac{d}{2}+1 \right\rfloor}\binom{n+d-3}{d-3}.
\end{align*}
It was shown in \cite[Section 7]{FLPS:2023} that the first sum above is bounded by the desired main term of Weyl's law. Rearranging  and simplifying the binomial coefficients then yields the desired bound of the theorem.

If the first sum on the right-hand side of \eqref{s2-4-2} is empty (i.e., when $\cos(\frac{3}{8}\pi)\mu - \frac{d}{2} + 1 < 0$), we decompose \eqref{s2-4-2} into two parts according to whether $n \leq \lfloor\frac{\mu}{2} - \frac{d}{2} + 1 \rfloor$ and then apply \cite[Theorems 3.3 and 3.4]{FLPS:2025}. If the first sum is non-empty but the second is empty (i.e., when $\lfloor\cos(\frac{3}{8}\pi)\mu - \frac{d}{2} + 1 \rfloor = \lfloor\frac{\mu}{2} - \frac{d}{2} + 1 \rfloor \geq 0$), we instead apply Proposition \ref{s4-24} and \cite[Theorem 3.3]{FLPS:2025}. In both cases, the desired bound of the theorem is obtained.\qed


\section{Cylinders}

By separation of variables, the eigenvalues of the Dirichlet Laplacian in cylinders $\mathcal{C}_{R,L}^d$, $d\geq 3$,  are given by
\begin{equation*}
    x_{n,k,l}=\left(\frac{j_{n+\frac{d-3}{2},k}}{R}\right)^2+\left(\frac{\pi l}{L}\right)^2, \quad k,l\in\mathbb{N}, n\in \mathbb{N}_0:=\mathbb{N}\cup \{0\},
\end{equation*}
of multiplicity $m_n^{d-1}$, where
\begin{align*}
m_n^d:
=\left\{\begin{array}{cc}
              1 & \text{if $n=0$}, \\
              \binom{n+d-1}{d-1}-\binom{n+d-3}{d-1} &\text{if $n\ge 1$},
            \end{array}
\right.
\end{align*}
and $j_{\nu,k}$ denotes the $k$th positive zero of the Bessel function $J_{\nu}$. Notice that the squares $(j_{n+\frac{d-3}{2},k}/R)^2$ in the expression of $x_{n,k,l}$ are exactly eigenvalues of the Dirichlet Laplacian in the ball $\mathcal{B}_R^{d-1}$ with the same multiplicities  $m_n^{d-1}$.

Similarly, the eigenvalues of the Neumann Laplacian in cylinders $\mathcal{C}_{R,L}^d$ are given by
\begin{equation*}
    x'_{n,k,l}=\left(\frac{a'_{n+\frac{d-3}{2},k}}{R}\right)^2+\left(\frac{\pi l}{L}\right)^2, \quad k\in\mathbb{N}, n,l\in\mathbb{N}_0,
\end{equation*}
of multiplicity $m_n^{d-1}$, where $a'_{n+\frac{d-3}{2},k}$ denotes the $k$th positive (non-negative if $n=0$) zero of the derivative $(x^{\frac{3-d}{2}}J_{n+\frac{d-3}{2}}(x))'$. Notice that the squares $(a'_{n+\frac{d-3}{2},k}/R)^2$ in the expression of $x'_{n,k,l}$ are exactly eigenvalues of the Neumann Laplacian in the ball $\mathcal{B}_R^{d-1}$ with the same multiplicities $m_n^{d-1}$.

\subsection{Weyl's law} \label{s4.1}

The eigenvalue counting problem for a cylinder boils down to summing the counting functions for a lower-dimensional ball with varying spectral parameters.

\begin{proof}[Proof of Theorem \ref{thm111}]
In the Dirichlet case we first observe that
\begin{equation}
    \mathscr{N}_{\mathcal{C}_{R,L}^d}(\mu)=\sum_{1\leq l<L\mu/\pi} \mathscr{N}_{\mathcal{B}_R^{d-1}}\left(\sqrt{\mu^2-\left(\frac{\pi l}{L}\right)^2}\right). \label{s4-1}
\end{equation}
 By the assumption, we have
\begin{equation}
	\mathscr{N}_{\mathcal{B}_R^{d-1}}(\mu)=C_{d-1} \left|\mathcal{B}_R^{d-1}\right|\mu^{d-1}-C'_{d-1}\left|\partial\mathcal{B}_R^{d-1}\right| \mu^{d-2}+O_{\varepsilon}\left(\mu^{d-3+\alpha+\varepsilon} \right) \label{s4-5}
\end{equation}
with $C_{d-1}=\omega_{d-1}/(2\pi)^{d-1}$ and $C'_{d-1}=\omega_{d-2}/(4(2\pi)^{d-2})$. Applying this asymptotics to the summand, the sum in \eqref{s4-1} is split into three parts. The error terms contribute at most $O_{\varepsilon}(\mu^{d-2+\alpha+\varepsilon})$. It remains to estimate
\begin{equation}
 \sum_{1\leq l<L\mu/\pi} \left(\mu^2-\left(\frac{\pi l}{L}\right)^2\right)^{m/2}, \quad m\in\mathbb{N}. \label{s4-2}
\end{equation}
When $m\geq 2$, by the Euler--Maclaurin formula, it is easy to get
\begin{equation}
    \eqref{s4-2}=\frac{\Gamma(\frac{m+2}{2})}{\Gamma(\frac{m+3}{2})}\frac{L}{2\sqrt{\pi}}\mu^{m+1}-\frac{1}{2}\mu^m+O\left(\mu^{m-1}\right). \label{s4-3}
\end{equation}
When $m=1$, we obtain
\begin{equation}
\eqref{s4-2}=\frac{L}{4}\mu^2+O\left(\mu\right) \label{s4-4}
\end{equation}
by simply comparing the sum with an integral $\int_0^{L\mu/\pi}(\mu^2-(\pi x/L)^2)^{1/2}\,\textrm{d}x$. To derive the asymptotics of $\mathscr{N}_{\mathcal{C}_{R,L}^d}(\mu)$ for $d\geq 4$, we apply \eqref{s4-3} with $m=d-1$ and $m=d-2$.  For the case $d=3$ we use both \eqref{s4-3} and \eqref{s4-4}. The computation is quite straightforward if we notice the identity $\omega_{d}=\pi^{d/2}/\Gamma(\frac{d}{2}+1)$.
%

The derivation for the Neumann case follows almost the same procedure. We need only observe that in this case,
\begin{equation*}
    \mathscr{N}_{\mathcal{C}_{R,L}^d}(\mu)=\sum_{0\leq l\leq L\mu/\pi} \mathscr{N}_{\mathcal{B}_R^{d-1}}\left(\sqrt{\mu^2-\left(\frac{\pi l}{L}\right)^2}\right),
\end{equation*}
where the summand corresponding to $l=0$ leads to the plus sign of the second main term in the desired Weyl’s law, while the summand for $l=L\mu/\pi$ (if this value happens to be an integer) provides a negligible contribution. The remaining computations proceed identically.
\end{proof}


\subsection{Improvement of P\'{o}lya's conjecture} \label{s4.2}

In this subsection, we prove improved forms of P\'{o}lya's conjecture for certain product domains. As a direct consequence of the results presented here and in \cite{FLPS:2023}, Theorem \ref{thm444} follows immediately.

The following lemma is a calculus result.
\begin{lemma}\label{lemforf}
For $\sigma>0$, consider the function
\begin{equation}
   f_{\sigma}(x)=\left(1-\left(x/\sigma \right)^2\right)^{\frac{d-1}{2}}, \quad x\in \left[0, \sigma\right].  \label{s4-6}
\end{equation}
For $d\geq 3$, it is decreasing on $[0, \sigma]$, with a transition from concave down on $[0, \sigma/\sqrt{d-2}]$ to concave up on $[\sigma/\sqrt{d-2}, \sigma]$.
\end{lemma}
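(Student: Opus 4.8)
The plan is to verify the three claimed properties of $f_\sigma$ directly by differentiation, noting that the function is a simple composition which makes the derivatives explicit. Write $u(x) = 1 - (x/\sigma)^2$, so that $f_\sigma(x) = u(x)^{(d-1)/2}$ and $u$ is decreasing on $[0,\sigma]$ with $u(0)=1$, $u(\sigma)=0$, and $u'(x) = -2x/\sigma^2 \le 0$. Since $t \mapsto t^{(d-1)/2}$ is increasing for $t \ge 0$ (as $d \ge 3$), monotonicity of $f_\sigma$ is immediate from that of $u$; strict decrease on $(0,\sigma)$ follows because $u' < 0$ there and the power function has positive derivative on $(0,1]$.

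For the concavity statement I would compute $f_\sigma''$ explicitly. Differentiating once gives
\begin{equation*}
f_\sigma'(x) = \frac{d-1}{2}\,u(x)^{(d-3)/2}\,u'(x) = -\frac{d-1}{\sigma^2}\,x\,\bigl(1-(x/\sigma)^2\bigr)^{(d-3)/2}.
\end{equation*}
Differentiating a second time (product rule on the factors $x$ and $u^{(d-3)/2}$) yields, after factoring out the common positive quantity $\frac{d-1}{\sigma^2}\bigl(1-(x/\sigma)^2\bigr)^{(d-5)/2}$ (valid on $[0,\sigma)$; the endpoint $x=\sigma$ is handled separately or by continuity when $d \ge 5$),
\begin{equation*}
f_\sigma''(x) = -\frac{d-1}{\sigma^2}\bigl(1-(x/\sigma)^2\bigr)^{(d-5)/2}\left(1 - (x/\sigma)^2 - (d-3)(x/\sigma)^2\right) = -\frac{d-1}{\sigma^2}\bigl(1-(x/\sigma)^2\bigr)^{(d-5)/2}\bigl(1-(d-2)(x/\sigma)^2\bigr).
\end{equation*}
The sign of $f_\sigma''(x)$ is therefore opposite to the sign of $1 - (d-2)(x/\sigma)^2$, which is nonnegative precisely when $x \le \sigma/\sqrt{d-2}$. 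Hence $f_\sigma'' \le 0$ on $[0,\sigma/\sqrt{d-2}]$ and $f_\sigma'' \ge 0$ on $[\sigma/\sqrt{d-2},\sigma]$, giving the claimed transition from concave down to concave up, with the inflection point at $x = \sigma/\sqrt{d-2}$.

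The only mild subtlety — and the step I would be most careful with — is the boundary behavior at $x = \sigma$ when $d = 3$ or $d = 4$, where the exponent $(d-5)/2$ in $f_\sigma''$ is negative and the formula appears singular. For $d = 3$, however, $f_\sigma(x) = 1 - (x/\sigma)^2$ is simply a downward parabola, so it is concave down on all of $[0,\sigma]$, consistent with $\sigma/\sqrt{d-2} = \sigma$; for $d = 4$, $f_\sigma(x) = (1-(x/\sigma)^2)^{3/2}$ and a direct computation of $f_\sigma''$ (without dividing by a vanishing factor) shows $f_\sigma''(x) = -\tfrac{3}{\sigma^2}(1-(x/\sigma)^2)^{-1/2}(1 - 2(x/\sigma)^2)$ on $[0,\sigma)$, again changing sign at $x = \sigma/\sqrt{2}$, with $f_\sigma' \to 0$ and the concavity statement holding up to the endpoint by continuity of $f_\sigma$. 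For $d \ge 5$ no singularity arises and the factored expression above is valid on all of $[0,\sigma]$. Collecting these cases completes the proof.
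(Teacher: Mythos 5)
Your computation is correct: the first derivative gives monotonicity, the factored second derivative
\begin{equation*}
f_\sigma''(x) = -\frac{d-1}{\sigma^2}\bigl(1-(x/\sigma)^2\bigr)^{\frac{d-5}{2}}\bigl(1-(d-2)(x/\sigma)^2\bigr)
\end{equation*}
changes sign exactly at $x=\sigma/\sqrt{d-2}$, and your handling of the low-dimensional cases $d=3,4$ (where the lemma degenerates to a parabola, respectively needs the endpoint treated by continuity) is sound. The paper states this lemma without proof, labelling it simply a calculus result, and your direct differentiation argument is precisely the standard verification that is implicitly intended, so there is nothing to add or correct.
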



\begin{theorem} \label{thm555}
Let $\mathcal{C}_d=\mathscr{D}\times I\subset \mathbb{R}^{d-1}\times \mathbb{R}$ be a product domain, where $d\geq 3$ and $I$ is an interval of length $L$. If the Dirichlet P\'olya's conjecture for the domain $\mathscr{D}$ holds, then
\begin{equation}
    \mathscr{N}^{\mathtt{D}}_{\mathcal{C}_3}(\mu)\leq \frac{\omega_3 }{(2\pi)^3} |\mathcal{C}_3|\mu^3-\frac{\pi|\mathcal{C}_3|}{8L^3} \left\lfloor\frac{L\mu}{\pi} \right\rfloor\left(\left\lfloor\frac{L\mu}{\pi} \right\rfloor+\frac{1}{3} \right), \label{s4-12}
\end{equation}
and for $d\geq 4$ we have
\begin{equation}
\begin{split}
    \mathscr{N}^{\mathtt{D}}_{\mathcal{C}_d}(\mu)\leq&\frac{\omega_{d}}{(2\pi)^{d}}|\mathcal{C}_d| \mu^d-\frac{(d-1)\omega_{d-1}\left|\mathcal{C}_d\right|}{4(2\pi)^{d-1}L} \cdot\\
    &\left(\frac{\pi}{L\mu}\left\lfloor\frac{L\mu}{\pi \sqrt{d-2}} \right\rfloor\right)^2 \left(1- A_1\left(\frac{\pi}{L\mu}\left\lfloor\frac{L\mu}{\pi \sqrt{d-2}} \right\rfloor\right)^2\right)\mu^{d-1},
\end{split}\label{s4-14}
\end{equation}
where
\begin{equation}
    A_1=\left\{
            \begin{array}{ll}
           \frac{d-3}{4},  & \textrm{if $d\geq 5$,}\\
           \frac{2}{3},            &\textrm{if $d=4$.}
            \end{array}
        \right.\label{s4-17}
\end{equation}
Furthermore, if  $\mu\geq \pi \sqrt{d-2}/L$ and $d\geq 11$, the inequality \eqref{s4-14} still holds when the term below is subtracted from its right side
\begin{equation}
    \frac{\omega_{d-1}\left|\mathcal{C}_d\right|}{2(2\pi)^{d-1}L}\left(1-\left(\frac{\pi}{L\mu}\left(\left\lfloor\frac{L\mu}{\pi \sqrt{d-2}} \right\rfloor+2\right) \right)^2\right)^{\frac{d-1}{2}} \mu^{d-1}. \label{s4-16}
\end{equation}
\end{theorem}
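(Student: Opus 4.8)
The plan is to reduce the theorem to a single one–dimensional sum–versus–integral inequality for the profile $f_\sigma$ of Lemma~\ref{lemforf}, taken with $\sigma:=L\mu/\pi$, and then to verify that inequality with concavity/convexity estimates. By separation of variables on $\mathcal{C}_d=\mathscr{D}\times I$, the Dirichlet eigenvalues of $\mathcal{C}_d$ are the numbers $\lambda+(\pi l/L)^2$ with $\lambda$ a Dirichlet eigenvalue of $\mathscr{D}$ and $l\in\mathbb{N}$, whence
\[
  \mathscr{N}^{\mathtt{D}}_{\mathcal{C}_d}(\mu)=\sum_{1\le l<\sigma}\mathscr{N}^{\mathtt{D}}_{\mathscr{D}}\!\left(\sqrt{\mu^2-(\pi l/L)^2}\,\right).
\]
Applying the Dirichlet P\'olya conjecture for $\mathscr{D}$ (the hypothesis), with $C_{d-1}=\omega_{d-1}/(2\pi)^{d-1}$, to each summand and pulling out $\mu^{d-1}$ gives $\mathscr{N}^{\mathtt{D}}_{\mathcal{C}_d}(\mu)\le \omega_{d-1}|\mathscr{D}|(2\pi)^{1-d}\mu^{d-1}\sum_{1\le l<\sigma}f_\sigma(l)$. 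A slicing computation for the unit ball yields $\int_0^1(1-u^2)^{(d-1)/2}\,\textrm{d}u=\omega_d/(2\omega_{d-1})$, so $\int_0^\sigma f_\sigma=\sigma\,\omega_d/(2\omega_{d-1})$, and the prefactor times this integral equals exactly $\omega_d(2\pi)^{-d}|\mathcal{C}_d|\mu^d$ (using $|\mathcal{C}_d|=|\mathscr{D}|L$). Hence, after the obvious normalization, the whole theorem is equivalent to a sharpened comparison of the form $\sum_{1\le l<\sigma}f_\sigma(l)\le \int_0^\sigma f_\sigma(x)\,\textrm{d}x-(\text{correction})$.

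For $d\ge4$ I would write $p=\lfloor\sigma/\sqrt{d-2}\rfloor$, let $N$ be the largest integer $<\sigma$, and split $\{1,\dots,N\}$ at $p$. On $[0,p]\subseteq[0,\sigma/\sqrt{d-2}]$ the function $f_\sigma$ is concave (Lemma~\ref{lemforf}), so on each unit subinterval it lies above its chord; integrating gives $f_\sigma(l)\le\int_{l-1}^l f_\sigma-\tfrac12(f_\sigma(l-1)-f_\sigma(l))$, and summing over $1\le l\le p$ telescopes to $\sum_{l=1}^p f_\sigma(l)\le\int_0^p f_\sigma-\tfrac12(1-f_\sigma(p))$; on $[p,\sigma]$ plain monotonicity gives $\sum_{l=p+1}^N f_\sigma(l)\le\int_p^\sigma f_\sigma$. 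Adding, the comparison reduces to the elementary inequality, with $s=(p/\sigma)^2\in[0,1/(d-2)]$,
\[
  (1-s)^{(d-1)/2}\le 1-\tfrac{d-1}{2}s+\tfrac{(d-1)A_1}{2}s^2 ,
\]
whose right-hand side, for $d\ge5$ and $A_1=(d-3)/4$, is precisely the degree-two Taylor polynomial of the left-hand side at $0$: the inequality is then Taylor's theorem plus the fact that the third derivative of $(1-s)^{(d-1)/2}$ is $\le0$ on $[0,1)$; for $d=4$ and $A_1=2/3$ it follows from a one-line bound on the Lagrange remainder of $(1-s)^{3/2}$ on $[0,1/2]$. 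The degenerate cases ($p=0$ or $p=N$, where a sum is empty) are immediate. The case $d=3$ is separate and fully explicit: $f_\sigma(x)=1-(x/\sigma)^2$ is a downward parabola, concave on all of $[0,\sigma]$, so $p=N=\lfloor\sigma\rfloor$ and $\sum_{l=1}^N(1-l^2/\sigma^2)=N-\frac{N(N+1)(2N+1)}{6\sigma^2}$, $\int_0^\sigma f_\sigma=\frac{2\sigma}{3}$; clearing denominators, the target bound collapses to $N^3-3\sigma^2N+2\sigma^3\ge0$, which factors as $(N-\sigma)^2(N+2\sigma)\ge0$.

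For the ``Furthermore'' part ($d\ge11$, $\mu\ge\pi\sqrt{d-2}/L$) the idea is to recover an extra $\tfrac12 f_\sigma(p+2)$ from the convex tail $(\sigma/\sqrt{d-2},\sigma)$, which under this hypothesis contains the whole range $[p+1,N]$. For $l\ge p+2$ the interval $[l-1,l]$ lies in that tail, where $f_\sigma$ is convex, so $f_\sigma$ lies above its tangent at $l$ and $\int_{l-1}^l f_\sigma-f_\sigma(l)\ge\tfrac12|f_\sigma'(l)|$; since $|f_\sigma'|$ is nonincreasing on $[\sigma/\sqrt{d-2},\infty)$ (with $f_\sigma\equiv0$ past $\sigma$), one has $|f_\sigma'(l)|\ge f_\sigma(l)-f_\sigma(l+1)$, and summing over $p+2\le l\le N$ telescopes to $\sum_{l=p+2}^N|f_\sigma'(l)|\ge f_\sigma(p+2)$ because $f_\sigma(N+1)=0$. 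Hence $\int_p^\sigma f_\sigma-\sum_{l=p+1}^N f_\sigma(l)\ge\tfrac12 f_\sigma(p+2)$, and combining with the bound of the previous paragraph upgrades the comparison inequality by exactly the advertised term. (When $p+2\ge\sigma$ that term is zero; note $d\ge11$ gives $\sqrt{d-2}\ge3$, so $\mu\ge\pi\sqrt{d-2}/L$ forces $\sigma\ge3$ and hence $p+2\le\sigma$, which is what makes the extra term both meaningful and nonnegative.)

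I expect the main obstacle to be this last step: one must locate the two grid points $p+1,p+2$ precisely relative to the inflection point $\sigma/\sqrt{d-2}$ and to $\sigma$, keep the convex-tail estimates valid in the small-$\mu$ regime where that tail holds only a handful of integers, and carefully track the sign and monotonicity of $f_\sigma'$ underpinning the telescoping — this is, I believe, the reason the clean restriction $d\ge11$ enters the statement. The elementary one-variable inequalities feeding the $d\ge4$ and $d=3$ arguments are routine once they are set up.
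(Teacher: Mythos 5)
Your proposal is correct and follows essentially the same route as the paper: the same reduction via separation of variables and P\'olya's hypothesis to a sum-versus-integral comparison for $f_\sigma$, the same concave-chord/convex-tangent corrections split at $\sigma/\sqrt{d-2}$ with Taylor bounds yielding $A_1$, and the same telescoping tangent estimate giving the extra term for $d\geq 11$. Your $d=3$ factorization $(\sigma-N)^2(N+2\sigma)\geq 0$ and your Lagrange-remainder bound for $d=4$ are only cosmetic variants of the paper's exact summation and difference-of-cubes arguments.
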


\begin{remark}
Theorems \ref{thm555} and \ref{thm666} rely on the assumption that P\'olya's conjecture holds for the domain $\mathscr{D}$. However, under a stronger assumption that an improved version of P\'olya's conjecture holds, more refined estimates can be obtained. For example, if the domain $\mathscr{D}$ is the unit disk, one may use Filonov, Levitin, Polterovich and Sher's \cite[Theorem 1.5]{FLPS:2025}  (see \eqref{s1-4}) or Theorem \ref{thm777} to improve the bound \eqref{s4-12}.
\end{remark}

\begin{proof}[Proof of Theorem \ref{thm555}]
We may assume $I=\left[0,L\right]$ without loss of generality. Observe that the Dirichlet eigenvalues of $-\Delta^{\mathtt{D}}_{\mathcal{C}_d}$ in $\mathcal{C}_d$ are equal to
\begin{equation*}
    \lambda_{m,l}=\rho_m+\left(\frac{\pi l}{L}\right)^2, \quad m,l \in \mathbb{N},
\end{equation*}
where $\rho_m$'s  are the Dirichlet eigenvalues of $-\Delta^{\mathtt{D}}_{\mathscr{D}}$ in $\mathscr{D}$. If $L\mu/\pi<1$, then $\mathscr{N}_{\mathcal{C}_d}^{\mathtt{D}}(\mu)$ vanishes and the desired results trivially hold. Hence, we may assume $L\mu/\pi\geq 1$. Then
\begin{equation}
\mathscr{N}_{\mathcal{C}_d}^{\mathtt{D}}(\mu)=\sum_{1\leq l\leq L\mu/\pi} \mathscr{N}_{\mathscr{D}}^{\mathtt{D}}\left(\sqrt{\mu^2-\left(\frac{\pi l}{L}\right)^2}\right),\label{s4-22}
\end{equation}
where, by the assumption, we have  for all $\rho\geq0$ that
\begin{equation}
    \mathscr{N}_{\mathscr{D}}^{\mathtt{D}}(\rho) \leq \frac{\omega_{d-1}}{(2\pi)^{d-1}} \left|\mathscr{D}\right|\rho^{d-1}.\label{s4-23}
\end{equation}  

Denote
\begin{equation*}
    \Delta=\frac{L\mu}{\pi}-\left\lfloor \frac{L\mu}{\pi}\right\rfloor\in [0,1).
\end{equation*}
If $d=3$, then, by direct evaluation of the summation of squares, we obtain that
\begin{align*}
 &\sum_{1\leq l\leq \lfloor L\mu/\pi\rfloor} \mu^2-\left(\pi l/L\right)^2\\
=&\frac{2L}{3\pi}\mu^3-\frac{\pi^2}{2L^2}\left(\left\lfloor \frac{L\mu}{\pi}\right\rfloor\left(\left\lfloor \frac{L\mu}{\pi}\right\rfloor+\frac{1}{3} \right)+\frac{4}{3}\Delta^3+2\Delta^2 \left\lfloor \frac{L\mu}{\pi}\right\rfloor\right)\\
\leq &\frac{2L}{3\pi}\mu^3-\frac{\pi^2}{2L^2}\left\lfloor \frac{L\mu}{\pi}\right\rfloor\left(\left\lfloor \frac{L\mu}{\pi}\right\rfloor+\frac{1}{3} \right).
\end{align*}
Combining this bound with \eqref{s4-22} and \eqref{s4-23} yields the bound \eqref{s4-12}.


We next prove the case $d\geq 4$. For simplicity of notation, we denote
\begin{equation*}
    \sigma'=\frac{L\mu}{\pi \sqrt{d-2}} \textrm{ and } \sigma=\frac{L\mu}{\pi}.
\end{equation*}
By \eqref{s4-22} and \eqref{s4-23},
\begin{equation}
	\mathscr{N}_{\mathcal{C}_d}^{\mathtt{D}}(\mu)\leq\frac{\omega_{d-1}}{(2\pi)^{d-1}} \left|\mathscr{D}\right|\mu^{d-1}\sum_{1\leq l\leq \lfloor \sigma\rfloor} f(l)  \label{s4-8}
\end{equation}
with $f=f_{\sigma}$ defined by \eqref{s4-6}. We next estimate the sum  $\sum_l f(l)$. Roughly speaking, the sum $\sum_l f(l)$, interpreted as the combined area of rectangles, is bounded above by the integral $\int_{0}^{\sigma}\! f(x)\,\textrm{d}x$ minus the combined area of certain ``in-between'' triangles, each of width $1$. The integral produces the main term, since
\begin{equation}
\frac{\omega_{d-1}|\mathscr{D}|}{(2\pi)^{d-1}} \mu^{d-1}\!\int_{0}^{\sigma}\!\!\! f(x)\textrm{d}x=\frac{\omega_{d-1}|\mathscr{D}|}{(2\pi)^{d-1}} \cdot\frac{L\mu^d\Gamma(\frac{d+1}{2})}{2\sqrt{\pi}\Gamma(\frac{d+2}{2})}=\frac{\omega_{d}}{(2\pi)^{d}}|\mathcal{C}_d| \mu^d.  \label{s4-15}
\end{equation}
The ``in-between'' triangles  we consider belong to two classes: the first one consists of triangles under the concave-down part of $f$ on $[0, \sigma']$,  with vertices at $(l, f(l))$, $(l-1, f(l))$, and $(l-1, f(l-1))$ for $1\leq l\leq \lfloor \sigma' \rfloor$; the second one consists of triangles under the concave-up part of $f$ on $[\sigma', \sigma]$, with vertices at $(l, f(l))$, $(l-1, f(l))$, and $(l-1, f(l)+|f'(l)|)$ for $\lfloor \sigma'\rfloor+2 \leq l\leq \lfloor \sigma\rfloor$. Notice that these two classes do not always exist---their existence depends on the size of $\mu$. We need to discuss this in several cases.

When $\sigma'<1$, the second term of the right side of \eqref{s4-14} vanishes, hence \eqref{s4-14} follows easily from the observation that $\sum_l f(l)\leq \int_{0}^{\sigma}\! f(x)\,\textrm{d}x$.

When $\sigma'\geq 1$, the first class is non-empty. The combined area of the triangles in the first class is equal to
\begin{equation}
    \frac{1}{2}\sum_{1\leq l \leq \lfloor \sigma' \rfloor}f(l-1)-f(l)=\frac{1}{2}\left(1-\left(1-\left(\lfloor \sigma' \rfloor/\sigma\right)^2\right)^{\frac{d-1}{2}} \right). \label{s4-13}
\end{equation}
If $d\geq 5$, by using the Taylor expansion retaining three terms plus a remainder term, we drop the remainder and obtain
\begin{equation*}
    \eqref{s4-13}\geq \frac{d-1}{4}\left(\frac{\lfloor \sigma' \rfloor}{\sigma} \right)^2 \left(1-\frac{d-3}{4}\left(\frac{\lfloor \sigma' \rfloor}{\sigma} \right)^2 \right).
\end{equation*}
If $d=4$, by using the difference of cubes formula and the Taylor expansion retaining two terms plus a remainder term, we get
\begin{align*}
    \eqref{s4-13}&=\frac{1}{2}\! \left(\!\!1-\left(1-\left(\frac{\lfloor \sigma' \rfloor}{\sigma} \right)^{\!2} \right)^{\frac{1}{2}}\! \right)\left(\! 2+\left(1-\left(\frac{\lfloor \sigma' \rfloor}{\sigma} \right)^{\!2} \right)^{\frac{1}{2}}-\left(\frac{\lfloor \sigma' \rfloor}{\sigma} \right)^{\!2}\!\right)\\
    &\geq \frac{3}{4}\left(\frac{\lfloor \sigma' \rfloor}{\sigma} \right)^2\left( 1-\frac{2}{3}\left(\frac{\lfloor \sigma' \rfloor}{\sigma} \right)^2\right).
\end{align*}
Combining \eqref{s4-15} and the above two bounds yields \eqref{s4-14}.

When $\sigma' \geq 1$ and $d \geq 11$, we have $\sigma - \sigma' \geq 2$. This implies that $\lfloor \sigma' \rfloor + 2 \leq \lfloor \sigma \rfloor$, ensuring that the second class is non-empty. The combined area of the triangles in the second class is equal to
\begin{align*}
     \frac{1}{2}\sum_{\lfloor \sigma' \rfloor + 2\leq l \leq \lfloor \sigma \rfloor}&\left|f'(l)\right|\geq \frac{1}{2}\sum_{\lfloor \sigma' \rfloor + 2\leq l \leq \lfloor \sigma \rfloor-1}\left( f(l)-f(l+1)\right)+\frac{1}{2}\left|f'(\lfloor \sigma \rfloor)\right|\\
     &\geq \frac{1}{2}\left(f\left(\lfloor \sigma' \rfloor + 2 \right) -f\left(\lfloor \sigma \rfloor\right)+\left|f'(\lfloor \sigma \rfloor)\right|\right)\geq \frac{1}{2} f\left(\lfloor \sigma' \rfloor + 2 \right)\\
     &=\frac{1}{2}\left(1-\left(\frac{\lfloor \sigma' \rfloor+2}{\sigma}\right)^2\right)^{\frac{d-1}{2}},
\end{align*}
where the convexity is utilized in the above derivation. This bound leads to \eqref{s4-16}, thereby concluding the proof of the theorem.
\end{proof}


\begin{remark}
In the case of $d=3$, our proof involves directly computing the exact value of the sum $\sum_l f(l)$. Of course, we could also handle the $d=3$ case similarly to how we treat $d\geq 4$. In fact, by comparing the sum with an integral, we can obtain the following bound slightly weaker than  \eqref{s4-12}:
\begin{equation}
    \mathscr{N}^{\mathtt{D}}_{\mathcal{C}_3}(\mu)\leq \frac{\omega_3 }{(2\pi)^3} |\mathcal{C}_3|\mu^3-\frac{\pi|\mathcal{C}_3|}{8L^3} \left\lfloor\frac{L\mu}{\pi} \right\rfloor^2. \label{s4-7}
\end{equation}
The proof is as follows. We may assume $\mu>\pi/L$, as otherwise \eqref{s4-7} holds trivially. Indeed, if $\mu\leq \pi/L$, then $\mathscr{N}^{\mathtt{D}}_{\mathcal{C}_3}(\mu)$ equals zero, while the right-hand side of \eqref{s4-7} is nonnegative. Since the curve of $f$ is concave down over the entire interval $[0, \sigma]$, the sum $\sum_l f(l)$ is bounded above by the integral $\int_{0}^{\sigma}\! f(x)\,\textrm{d}x$ minus the combined area of  triangles with vertices at $(l, f(l))$, $(l-1, f(l))$ and $(l-1, f(l-1))$ for $1\leq l\leq \lfloor \sigma\rfloor$. In other words,
\begin{align*}
\mu^2 \sum_{1\leq l\leq \lfloor \sigma \rfloor} f(l)&<\mu^2 \int_{0}^{\sigma}\! f(x)\,\textrm{d}x-\mu^2 \sum_{l=1}^{\lfloor \sigma\rfloor} \frac{1}{2}\left(f(l-1)-f(l) \right)\\
     &=\frac{2L}{3\pi}\mu^3-\frac{\pi^2}{2L^2}\left\lfloor\frac{L\mu}{\pi} \right\rfloor^2.
\end{align*}
Plugging this bound in \eqref{s4-8} gives \eqref{s4-7}. \qed
\end{remark}


\begin{theorem} \label{thm666}
Let $\mathcal{C}_d=\mathscr{D}\times I\subset \mathbb{R}^{d-1}\times \mathbb{R}$ be a product domain, where $d\geq 3$ and $I$ is an interval of length $L$.  If the Neumann P\'olya's conjecture for the domain $\mathscr{D}$ holds, then
\begin{equation}
\mathscr{N}^{\mathtt{N}}_{\mathcal{C}_3}(\mu)-\frac{\omega_3 }{(2\pi)^3} |\mathcal{C}_3|\mu^3\geq \left\{
            \begin{array}{ll}
           \frac{|\mathcal{C}_3|}{8L^2} \left( \frac{L\mu}{\pi}-\frac{1}{3}\right)\mu,                                         & \textrm{if $\frac{L\mu}{\pi} \geq \frac{1}{3}$,}\\
           \frac{|\mathcal{C}_3|}{6\pi L} \left( \frac{3}{2}-\frac{L\mu}{\pi}\right)\mu^2,                                   &\textrm{if $0\leq \frac{L\mu}{\pi} \leq \frac{3}{2}$,}
            \end{array}
        \right. \label{s4-9}
\end{equation}
and, for $d\geq 4$, we have that
\begin{equation}
\mathscr{N}^{\mathtt{N}}_{\mathcal{C}_d}(\mu)\geq \frac{\omega_{d}}{(2\pi)^{d}} \left|\mathcal{C}_d\right|\mu^d+\frac{\omega_{d}\left|\mathcal{C}_d\right|}{2(2\pi)^{d-1}L}\left(\frac{2\Gamma(\frac{d+2}{2})}{\sqrt{\pi}\Gamma(\frac{d+1}{2})}-\frac{L\mu}{\pi}\right) \mu^{d-1}  \label{s4-20}
\end{equation}
when  $0\leq \frac{L\mu}{\pi}\leq \frac{2\Gamma(\frac{d+2}{2})}{\sqrt{\pi}\Gamma(\frac{d+1}{2})}$, and that
\begin{equation}
	\begin{split}
		\mathscr{N}^{\mathtt{N}}_{\mathcal{C}_d}(\mu)\geq &\frac{\omega_{d}}{(2\pi)^{d}} \left|\mathcal{C}_d\right|\mu^d+\frac{(d-1)\omega_{d-1}\left|\mathcal{C}_d\right|}{4(2\pi)^{d-1}L} \cdot \\
		&\left(\frac{\left\lfloor\frac{L\mu}{\pi \sqrt{d-2}} \right\rfloor-1}{L\mu/\pi}\right)^{\!\!2}\left(1-A_1 \left(\frac{\left\lfloor\frac{L\mu}{\pi \sqrt{d-2}} \right\rfloor-1}{L\mu/\pi}\right)^{\!\!2}\right)\mu^{d-1}
	\end{split}\label{s4-18}
\end{equation}
when  $\mu\geq \pi \sqrt{d-2}/L$, with $A_1$ defined by \eqref{s4-17}. Furthermore, if  $\mu\geq \pi \sqrt{d-2}/L$ and $d\geq 11$, the inequality \eqref{s4-18}  still holds when the term below is added to its right side
\begin{equation}
\frac{\omega_{d-1}\left|\mathcal{C}_d\right|}{2(2\pi)^{d-1}L}\left(1- \left( \frac{\pi}{L\mu} \left(\left\lfloor\frac{L\mu}{\pi \sqrt{d-2}} \right\rfloor+1\right)\right)^{\!\!2}\right)^{\frac{d-1}{2}}\mu^{d-1}. \label{s4-21}
\end{equation}
\end{theorem}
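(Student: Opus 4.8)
The plan is to prove Theorem \ref{thm666} by mirroring the structure of the proof of Theorem \ref{thm555}, replacing the upper bound \eqref{s4-23} with the Neumann lower bound $\mathscr{N}^{\mathtt{N}}_{\mathscr{D}}(\rho)\geq \frac{\omega_{d-1}}{(2\pi)^{d-1}}|\mathscr{D}|\rho^{d-1}$ and the Dirichlet decomposition \eqref{s4-22} with the Neumann analogue
\begin{equation*}
\mathscr{N}^{\mathtt{N}}_{\mathcal{C}_d}(\mu)=\sum_{0\leq l\leq L\mu/\pi}\mathscr{N}^{\mathtt{N}}_{\mathscr{D}}\Bigl(\sqrt{\mu^2-(\pi l/L)^2}\Bigr),
\end{equation*}
where the summand $l=0$ contributes exactly $\frac{\omega_{d-1}}{(2\pi)^{d-1}}|\mathscr{D}|\mu^{d-1}$ (after multiplication by the height $L$ this produces the leading Weyl term for the cylinder, since $\frac{\omega_{d-1}}{(2\pi)^{d-1}}|\mathscr{D}|L\mu^{d-1}$ is \emph{not} quite the main term---rather, the sum over all $l$ reconstructs it via the integral, exactly as in \eqref{s4-15}). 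The key inequality to establish is therefore a lower bound for $\sum_{0\leq l\leq\lfloor\sigma\rfloor}f_\sigma(l)$, where $f_\sigma$ is the function of Lemma \ref{lemforf}; this time we want $\sum_l f_\sigma(l)$ to \emph{exceed} $\int_0^\sigma f_\sigma(x)\,\mathrm{d}x$ by a controlled amount, the reverse of the Dirichlet situation.

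First I would handle $d=3$ by direct computation: since $f_\sigma(x)=1-(x/\sigma)^2$ is a downward parabola, the midpoint/trapezoid comparison gives $\sum_{l=0}^{\lfloor\sigma\rfloor}f_\sigma(l)$ in closed form via the sum-of-squares formula, and one extracts the two stated lower bounds in \eqref{s4-9} by keeping, respectively, the $(L\mu/\pi-1/3)$ piece (valid once $L\mu/\pi\geq 1/3$, using that the dropped terms $\tfrac43\Delta^3+2\Delta^2\lfloor L\mu/\pi\rfloor\geq 0$ with $\Delta$ the fractional part) and the $(3/2-L\mu/\pi)$ piece (valid for $L\mu/\pi\leq 3/2$, which is the regime where only $l=0,1$ or just $l=0$ occur and one simply bounds the truncated sum from below crudely). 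Second, for $d\geq 4$ in the small-$\mu$ regime $L\mu/\pi\leq \frac{2\Gamma((d+2)/2)}{\sqrt\pi\,\Gamma((d+1)/2)}$, the integral $\int_0^\sigma f_\sigma$ already exceeds $\sigma\cdot f_\sigma(\sigma)=0$ trivially, but more usefully I would bound $\sum_{l\geq 0}f_\sigma(l)\geq f_\sigma(0)=1\geq \int_0^\sigma f_\sigma(x)\,\mathrm{d}x - (\text{that integral} - 1)$; since $\int_0^\sigma f_\sigma=\sigma\cdot\frac{\sqrt\pi\,\Gamma((d+1)/2)}{2\Gamma((d+2)/2)}$, rearranging gives precisely \eqref{s4-20} after multiplying through by $\frac{\omega_{d-1}}{(2\pi)^{d-1}}|\mathscr{D}|\mu^{d-1}$ and using $\omega_d = \pi^{d/2}/\Gamma(d/2+1)$ to convert constants. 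Third, for $d\geq 4$ with $\mu\geq \pi\sqrt{d-2}/L$ I would run the ``in-between triangles'' argument of Theorem \ref{thm555} in reverse: on the concave-down part $[0,\sigma']$ the rectangle sum now \emph{overshoots} the integral by the combined triangle areas, and on the concave-up part by convexity one again gets a gain; the crucial bookkeeping point is an index shift---using $\lfloor\sigma'\rfloor-1$ in place of $\lfloor\sigma'\rfloor$---to make sure each triangle lies genuinely below the graph and the left Riemann sum is honestly dominated, which accounts for the $\lfloor L\mu/(\pi\sqrt{d-2})\rfloor-1$ appearing in \eqref{s4-18}. The analogous Taylor expansions (three terms for $d\geq 5$, the difference-of-cubes trick for $d=4$) then produce the stated $A_1$ and the polynomial factor, and for $d\geq 11$ the extra concave-up triangle below $f_\sigma$ on $[\sigma',\sigma]$ contributes the additional term \eqref{s4-21}, again via convexity exactly as in the Dirichlet proof.

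The main obstacle I anticipate is not any single estimate but getting the \emph{direction of every inequality} consistently reversed while keeping the error terms on the correct side: in Theorem \ref{thm555} one discards nonnegative quantities to weaken an upper bound, whereas here one must discard nonnegative quantities to weaken a \emph{lower} bound, so the roles of ``$\sum\leq\int$'' versus ``$\sum\geq\int$'' and of left versus right Riemann sums must be tracked carefully near the endpoint $l=\lfloor\sigma\rfloor$ where $f_\sigma$ is small but its derivative is largest. A secondary subtlety is the matching of normalizing constants: one must verify that the $l=0$ term together with the integral-reconstruction of $\sum_{l\geq 1}f_\sigma(l)$ genuinely assembles $\frac{\omega_d}{(2\pi)^d}|\mathcal{C}_d|\mu^d$ and not that quantity plus or minus a stray boundary contribution---this is where the observation recorded in the remark after Theorem \ref{thm444} (that Neumann P\'olya for $\mathscr{D}$ already forces Neumann P\'olya for $\mathscr{D}\times I$) comes from, and it should be stated cleanly as a consequence of the $l=0$ term alone dominating $\frac{\omega_{d-1}}{(2\pi)^{d-1}}|\mathscr{D}|\mu^{d-1}$ while the remaining terms reconstruct the rest of the Weyl main term.
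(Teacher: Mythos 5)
Your plan coincides structurally with the paper's own proof: the Neumann analogues of \eqref{s4-22}--\eqref{s4-23}, the exact sum-of-squares evaluation with a discarded nonnegative fractional-part remainder plus the single $l=0$ term for the two branches of \eqref{s4-9}, the $l=0$ term rewritten against $\int_0^\sigma f_\sigma$ via \eqref{s4-15} to produce \eqref{s4-20} (your constant bookkeeping with $\omega_d/\omega_{d-1}$ is exactly the paper's), and the two classes of ``in-between'' triangles with telescoping, the same Taylor/difference-of-cubes expansions giving $A_1$, and the convexity argument for \eqref{s4-21} when $d\ge 11$. The one delicate step, which you flag but describe ambiguously, is the concave-down part $[0,\sigma']$: chord triangles lying \emph{below} the graph bound the overshoot of the left Riemann sum from the wrong side there (concavity makes the trapezoid a lower bound for the integral, not an upper bound), so the paper instead uses tangent-line triangles of area $\tfrac12|f'(l)|$ for $0\le l\le\lfloor\sigma'\rfloor-1$ and then exploits the monotonicity of $|f'|$ on the concave-down region to replace $|f'(l)|$ by the backward difference $f(l-1)-f(l)$ and telescope; this substitution, rather than any domination issue, is precisely where the $\lfloor L\mu/(\pi\sqrt{d-2})\rfloor-1$ in \eqref{s4-18} comes from (with the range $1\le\sigma'<2$ handled trivially since the claimed gain is then zero). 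With that correction, and with the Neumann remainder in the $d=3$ computation being $\Delta(1-\Delta)\bigl(4\tfrac{L\mu}{\pi}+2\lfloor\tfrac{L\mu}{\pi}\rfloor+1\bigr)\pi^2/(6L^2)$ rather than the Dirichlet expression you quote (still nonnegative, so the step survives), your outline reproduces the paper's argument.
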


\begin{remark}
If $d=2k+1$, $k\in \mathbb{N}$, then
\begin{equation*}
	\frac{\Gamma(\frac{d+2}{2})}{\Gamma(\frac{d+1}{2})}=\frac{\sqrt{\pi}}{2}\frac{(2k+1)!!}{(2k)!!};
\end{equation*}
if $d=2k$, $k\in \mathbb{N}$, then
\begin{equation*}
	\frac{\Gamma(\frac{d+2}{2})}{\Gamma(\frac{d+1}{2})}=\frac{1}{\sqrt{\pi}}\frac{(2k)!!}{(2k-1)!!}.
\end{equation*}
\end{remark}

\begin{proof} [Proof of Theorem  \ref{thm666}]
We may assume $I=\left[0,L\right]$ without loss of generality. Observe that the Neumann eigenvalues of $-\Delta^{\mathtt{N}}_{\mathcal{C}_d}$ in $\mathcal{C}_d$ are equal to
\begin{equation*}
    \lambda'_{m,l}=\rho'_m+\left(\frac{\pi l}{L}\right)^2, \quad m,l \in \mathbb{N}_0,
\end{equation*}
where $\rho'_m$'s  are the Neumann eigenvalues of $-\Delta^{\mathtt{N}}_{\mathscr{D}}$ in $\mathscr{D}$. Therefore,
\begin{equation}
\mathscr{N}_{\mathcal{C}_d}^{\mathtt{N}}(\mu)=\sum_{0\leq l\leq L\mu/\pi} \mathscr{N}_{\mathscr{D}}^{\mathtt{N}}\left(\sqrt{\mu^2-\left(\frac{\pi l}{L}\right)^2}\right), \label{s4-10}
\end{equation}
where, by the assumption, the function $\mathscr{N}_{\mathscr{D}}^{\mathtt{N}}(\rho)$ satisfies for all $\rho\geq 0$ that
\begin{equation}
    \mathscr{N}_{\mathscr{D}}^{\mathtt{N}}(\rho) \geq \frac{\omega_{d-1}}{(2\pi)^{d-1}} \left|\mathscr{D}\right|\rho^{d-1}. \label{s4-11}
\end{equation}

We first consider the case $d=3$. The formula \eqref{s4-10} gives particularly that
\begin{align*}
\mathscr{N}_{\mathcal{C}_3}^{\mathtt{N}}(\mu)&\geq \mathscr{N}_{\mathscr{D}}^{\mathtt{N}}(\mu)\geq \frac{\omega_2}{(2\pi)^2} \left|\mathscr{D}\right|\mu^{2}\\
  &=\frac{\omega_3 }{(2\pi)^3} |\mathcal{C}_3|\mu^3+\left(\frac{\omega_2}{(2\pi)^2} \left|\mathscr{D}\right|\mu^{2}-\frac{\omega_3 }{(2\pi)^3} |\mathcal{C}_3|\mu^3 \right).
\end{align*}
Simplifying this expression gives the second case of \eqref{s4-9},  which is a desirable lower bound only when $L\mu/\pi\leq 3/2$.

In fact, for all $\mu\geq 0$,  by \eqref{s4-10} and \eqref{s4-11}, we have
\begin{equation*}
    \mathscr{N}_{\mathcal{C}_3}^{\mathtt{N}}(\mu)\geq \frac{|\mathscr{D}|}{4\pi}\sum_{0\leq l\leq \lfloor L\mu/\pi\rfloor}  \left(\mu^2-\frac{\pi^2}{L^2} l^2 \right).
\end{equation*}
Denote
\begin{equation*}
    \Delta=\frac{L\mu}{\pi}-\left\lfloor \frac{L\mu}{\pi}\right\rfloor\in [0,1).
\end{equation*}
By direct evaluation of the summation of squares combined with factorization, we obtain that
\begin{align*}
    \mathscr{N}_{\mathcal{C}_3}^{\mathtt{N}}(\mu)&\geq \frac{|\mathscr{D}|}{4\pi}\left( \frac{2L}{3\pi}\mu^3+\frac{1}{2}\mu^2-\frac{\pi}{6L}\mu+\frac{\pi^2}{6L^2}\Delta(1-\Delta)\!\left(\!4\frac{L\mu}{\pi}+2\left\lfloor \frac{L\mu}{\pi}\right\rfloor+1 \right)\! \right)\\
    &\geq \frac{|\mathscr{D}|}{4\pi}\left( \frac{2L}{3\pi}\mu^3+\frac{1}{2}\mu^2-\frac{\pi}{6L}\mu\right)\\
    &=\frac{\omega_3 }{(2\pi)^3} |\mathcal{C}_3|\mu^3+\frac{|\mathcal{C}_3|}{8 L^2}\mu \left( \frac{L\mu}{\pi}-\frac{1}{3}\right).
\end{align*}
While this lower bound is always valid, it yields meaningful results only when $L\mu/\pi$ exceeds $1/3$. This finishes the proof of \eqref{s4-9}.


We next prove the case $d\geq 4$. For simplicity of notation, we denote
\begin{equation*}
	\sigma'=\frac{L\mu}{\pi \sqrt{d-2}} \textrm{ and } \sigma=\frac{L\mu}{\pi}.
\end{equation*}
By \eqref{s4-10} and \eqref{s4-11}, for all $\mu\geq 0$,
\begin{equation*}
	\mathscr{N}_{\mathcal{C}_d}^{\mathtt{N}}(\mu)\geq \frac{\omega_{d-1}}{(2\pi)^{d-1}} \left|\mathscr{D}\right|\mu^{d-1}\sum_{0\leq l\leq \lfloor \sigma\rfloor} f(l)
\end{equation*}
with $f=f_{\sigma}$ defined by \eqref{s4-6}. In particular,
\begin{align*}
\mathscr{N}_{\mathcal{C}_d}^{\mathtt{N}}(\mu)&\geq \frac{\omega_{d-1}}{(2\pi)^{d-1}} \left|\mathscr{D}\right|\mu^{d-1}\\
&=\frac{\omega_{d}}{(2\pi)^{d}} \left|\mathcal{C}_d\right|\mu^d+\left(\frac{\omega_{d-1}}{(2\pi)^{d-1}} \left|\mathscr{D}\right|\mu^{d-1}-\frac{\omega_{d}}{(2\pi)^{d}} \left|\mathcal{C}_d\right|\mu^d \right).
\end{align*}
Combining and simplifying the last two terms yields \eqref{s4-20}.

The idea of the following proof is similar to that of Theorem \ref{thm555}. The sum $\sum_l f(l)$ is bounded below by the integral $\int_{0}^{L\mu/\pi}\! f(x)\,\textrm{d}x$ plus the combined area of certain ``in-between'' triangles, each of width $1$. Those triangles belong to two classes: the first one consists of triangles above the concave-down part of $f$ on $[0, \sigma']$,  with vertices at $(l, f(l))$, $(l+1, f(l))$, and $(l+1, f(l)-|f'(l)|)$ for $0\leq l\leq \lfloor \sigma' \rfloor-1$; the second one consists of triangles above the concave-up part of $f$ on $[\sigma', \sigma]$, with vertices at $(l, f(l))$, $(l+1, f(l))$, and $(l+1, f(l+1))$ for $\lfloor \sigma'\rfloor+1 \leq l\leq \lfloor \sigma\rfloor-1$, along with an additional triangle with vertices at $(\lfloor \sigma\rfloor, f(\lfloor \sigma\rfloor))$, $(\lfloor \sigma\rfloor+1, f(\lfloor \sigma\rfloor))$ and $(\lfloor \sigma\rfloor+1,0)$.

When $\sigma'\geq 2$, the combined area of the triangles in the first class is equal to
\begin{align}
	\frac{1}{2}\sum_{0\leq l \leq \lfloor \sigma' \rfloor-1}|f'(l)|&\geq \frac{1}{2}\sum_{1\leq l\leq \lfloor \sigma' \rfloor-1} f(l-1)-f(l)\nonumber\\
	&= \frac{1}{2}\left(1-\left(1-\left(\frac{\lfloor \sigma' \rfloor-1}{\sigma}\right)^2\right)^{\frac{d-1}{2}} \right)\nonumber\\
	&\geq \frac{d-1}{4}\left(\frac{\lfloor \sigma' \rfloor-1}{\sigma} \right)^2 \left(1-A_1 \left(\frac{\lfloor \sigma' \rfloor-1}{\sigma} \right)^2 \right), \label{s4-19}
\end{align}
where the last inequality follows from the same method used for \eqref{s4-13}. It is trivial that the lower bound \eqref{s4-19} holds for $1\leq \sigma'<2$. Hence we obtain \eqref{s4-18}.

When $\sigma' \geq 1$ and $d \geq 11$, we have $\sigma- \sigma' \geq 2$. This implies that $\lfloor \sigma' \rfloor + 1 \leq \lfloor \sigma  \rfloor-1$. The combined area of the triangles in the second class is equal to
\begin{align*}
&\frac{1}{2}\sum_{\lfloor \sigma' \rfloor + 1\leq l \leq \lfloor \sigma \rfloor-1}\left( f(l)-f(l+1)\right)+\frac{1}{2} f(\lfloor \sigma \rfloor)=\frac{1}{2} f\left(\lfloor \sigma' \rfloor+1 \right)\\
	=&\frac{1}{2}\left(1-\left(\frac{\lfloor \sigma' \rfloor+1}{\sigma}\right)^2\right)^{\frac{d-1}{2}}.
\end{align*}
This leads to \eqref{s4-21}, thereby concluding the proof of the theorem.
\end{proof}


\subsection*{Acknowledgments}
We thank Prof. Renjin Jiang for helpful comments, which included drawing our attention to their result \cite[Theorem 2.3]{JL:2025}. C. Miao is partially supported by the National Key R\&D Program of China (no. 2022YFA1005700) and the NSFC (no. 12371095). J. Guo is partially supported by the NSFC (nos. 12571110 and 12341102).


%
%
%

\end{document}